\documentclass[a4paper,12pt]{extarticle}
\usepackage{amsmath}
\usepackage{amssymb}
\usepackage{amsthm}

\setlength{\oddsidemargin}{-1cm}
\setlength{\evensidemargin}{-1cm}
\setlength{\textwidth}{17cm}
\setlength{\textheight}{25cm}
\setlength{\topmargin}{-2cm}
\setlength{\parindent}{0pt}
\marginparwidth4cm
\usepackage{hyperref}
\hypersetup{
    colorlinks,
    citecolor=blue,
    filecolor=blue,
    linkcolor=blue,
    urlcolor=blue
}
\allowdisplaybreaks
\raggedbottom 
\theoremstyle{thmstyleone}

\begin{document}
\newtheorem{theor}{Theorem}[section] 
\newtheorem{prop}[theor]{Proposition} 
\newtheorem{cor}[theor]{Corollary}
\newtheorem{lemma}[theor]{Lemma}
\newtheorem{sublem}[theor]{Sublemma}
\newtheorem{defin}[theor]{Definition}
\newtheorem{conj}[theor]{Conjecture}
\newtheorem{Rem}[theor]{Remark}

\gdef\beginProof{\par{\bf Proof: }}
\gdef\endProof{${\bf Q.E.D.}$\par}  
\gdef\ar#1{\widehat{#1}}
\gdef\pr{^{\prime}}
\gdef\prpr{^{\prime\prime}}
\gdef\mtr#1{\overline{#1}}
\def\bz{\mbox{\boldmath$\zeta$\unboldmath}}
\def\bzs{\mbox{\boldmath$\zeta'$\unboldmath}}
\def\ba{\mbox{\boldmath$a$\unboldmath}}
\def\bzo{\overline{\bz}}
\def\odd{{\rm odd}}
\gdef\ra{\rightarrow}
\gdef\Bbb{\bf }
\gdef\P1{{\Bbb P}^{1}_{D}}
\gdef\dbd{dd^{c}}
\gdef\a{\alpha}
\gdef\ca{ch_{g}(\alpha)}
\gdef\ttmk#1{\widetilde{\theta}^{k}(#1)}
\gdef\tdm#1{\theta^{k}({#1}^{\vee})^{-1}}
\gdef\td#1{\theta^{k}({#1}^{\vee})}
\gdef\rl{{\Lambda}}
\def\covol{{\rm covol}}
\def\lcovol{{\rm lcovol}}
\gdef\CT{CT}
\gdef\refeq#1{(\ref{#1})}
\gdef\mn{{\mu_{n}}}
\gdef\zn{{\Bbb Z}/(n)}
\gdef\umn{^{\mn}}
\gdef\lmn{_{\mn}}
\gdef\blb{{\big(}}
\gdef\brb{{\big)}}
\gdef\ttmk#1{\widetilde{\lambda}(#1)}
\gdef\chge#1{ch_{g}^{-1}(\lambda_{-1}({#1}^{\vee}))
ch_{g}(\lambda_{-1}({#1}^{\vee}_{g}))}
\gdef\mlcovol{{\rm mlc}}
\gdef\Hom{{\rm Hom}}
\def\Trs{{\rm Tr_s\,}}
\def\Tr{{\rm Tr\,}}
\def\End{{\rm End}}
\def\eq{equivariant }
\def\Td{{\rm Td}}
\def\ch{{\rm ch}}
\def\torus{{\cal T}}
\def\Proj{{\rm Proj}}
\def\bmn{{R_{n}}}
\def\uexp#1{{{\rm e}^{#1}}}
\def\Spec{{\rm Spec}\,}
\def\Qb{\mtr{\Bbb Q}}
\def\Zn{{\Bbb Z}/n}
\def\deg{{\rm deg}\,}
\def\mod{{\rm mod}}
\def\ac1{\ar{\rm c}_{1}}
\def\boxtimes{{\otimes_{\rm Ext}}}
\def\Qmm{{{\Bbb Q}(\mu_{m})}}
\def\NIm#1{{\rm Im}(#1)}
\def\NRe#1{{\rm Re}(#1)}
\def\rk{{\rm rk}\,}
\def\Uebungen{\subsection*{Aufgaben}}
\def\phi{\varphi}
\def\deg{{{\rm deg}\,}}
\def\Td{{\rm Td}}
\def\ch{{\rm ch}}
\def\Hom{{\rm Hom}}
\def\Bil{{\rm Bil}}
\def\End{{\rm End}}
\def\Sym{\Sym}
\gdef\endProof{\hfill$\square$\par\bigskip\noindent}
\def\Im{{\rm Im}\,}
\def\Re{{\rm Re}\,}
\def\im{{\rm im}\,}
\def\ker{{\rm ker}\,}
\def\Tr{{\rm Tr}\,}
\def\Eig{{\rm Eig}\,}
\def \x{\times}
\def \BN{{\bf N}}
\def \BZ{{\bf Z}}
\def \BQ{{\bf Q}}
\def \BR{{\bf R}}
\def \BC{{\bf C}}
\def \BH{{\bf H}}
\def\BP{{\bf P}}
\def\Sp{{\bf Sp}}
\def\GL{\mbox{\bf GL}}
\def\SL{\mbox{\bf SL}}
\def\SO{\mbox{\bf SO}}
\def\O{\mbox{\bf O}}
\def\SU{\mbox{\bf SU}}
\def\so{{\frak {so}}}
\def\Aut{\mbox{\rm Aut}}
\def\sign{\mbox{\rm sign\,}}
\def\ord{{\rm ord}}
\def \<{\langle}
\def \>{\rangle}
\def \Ad {{\rm Ad}}
\def \ad {{\rm ad}}
\def\vp{\varphi}
\def\vep{\varepsilon}
\def\theta{\vartheta}
\def\Ric{{\rm Ric}}
\def\vol{{\rm vol}}
\def\diam{{\rm diam}\,}
\def\xx{{\bf x}}
\def\Pf{{\rm Pf}}
\def\grad{{\rm grad}\,}
\def\E{E}
\def\F{{\bf F}} 
\def\eps{\varepsilon}
\def\div{{\rm div\,}}
\def\divv{{\nabla^*}}
\def\Sym{{\rm Sym}}
\def\Cas{{\rm Cas}}
\def\f{b}
\def\top{{\rm top}}
\def\J{J}
\def\mua{m^{\cal L}}
\def\XM{{X}}
\def\pp{{n}}


\def\bz{\mbox{\boldmath$\zeta$\unboldmath}}
\def\bzs{\mbox{\boldmath$\zeta'$\unboldmath}}
\def\odd{{\rm odd}}

\author{
Kai K\"ohler\footnote{Mathematisches Institut/
Universit\"atsstr. 1/
Geb\"aude 25.22/
D-40225 D\"usseldorf/
koehler@math.uni-duesseldorf.de}} 
\title{Analytic torsion forms for fibrations by projective curves}
\maketitle
\begin{abstract}
An explicit formula for analytic torsion forms for fibrations by projective curves is given. In particular one obtains a formula for direct images in Arakelov geometry in the corresponding setting. The main tool is a new description of Bismut's equivariant Bott-Chern current in the case of isolated fixed points.
\end{abstract}
\begin{center}
2020 Mathematics Subject Classification: 14G40, 58J52
\end{center}
\thispagestyle{empty}
\setcounter{page}{1}

\section{Introduction}
The purpose of this paper is to make the computation of analytic torsion forms more accessible, exploring a method relying on a result by Bismut and Goette. We use this method to explicitly compute analytic torsion forms for fibrations by projective curves.

Analytic torsion forms have been constructed and investigated by Bismut and the author in \cite{BK} (1992) using heat kernels of certain differential operators. This definition followed previous constructions by Gillet-Soul\'e (\cite{GS1} 1986), Bismut-Gillet-Soul\'e (\cite{BGS2} 1988), Gillet-Soul\'e (\cite{GSZ} 1991). Further constructions and extensions  have been given by Faltings (\cite{Fa} 1992), Zha (\cite{Zha} 1998), Ma (\cite{Ma} 2000), Bismut (\cite{Bi10} 2013), Burgos Gil-Freixas i Montplet-Li\c tcanu (\cite{BuFrLi} 2014, including an axiomatic characterisation) and several other articles and books. Analytic torsion forms $T_\pi(\mtr E)$ are differential forms on the base $B$ associated to Hermitian holomorphic vector bundles $\mtr E$ over fibrations $\pi:M\to B$ of complex manifolds equipped with a certain K\"ahler structure. Their degree 0 part equals Ray-Singer's complex analytic torsion. Their main application is the construction of a direct image $\pi_!$ of Hermitian vector bundles in Gillet-Soul\'e's Arakelov $K$-theory of arithmetic schemes. This direct image is the sum of higher direct images on algebraic schemes plus $T_\pi$. Bismut's immersion formula for torsion forms (\cite{Bi11R}) enabled Gillet-R\"ossler-Soul\'e to prove a Grothendieck-Riemann-Roch theorem in Arakelov Geometry which relates $\pi_!$ to the direct image in Gillet-Soul\'e's Chow intersection theory of cycles and Green currents (\cite{GRS}, extending \cite{GS}). The torsion form also played a key role in Fu's and Zhang's proof of the birational invariance of BCOV torsion (\cite{Zha2},\cite{FZha}).

While there are many computations for the degree 0 part of analytic torsion forms, there are currently only few explicitly known values of analytic torsion forms in higher degree:
Torsion forms are known for vector bundles over torus bundles (\cite{K3}). Also Mourougane showed $T_\pi({\cal O})^{[2]}=0$ as the value in degree 2 of $T_\pi$ for the fibration  by Hirzebruch surfaces over ${\bf P}^1\BC$ (\cite{Mourougane}). Furthermore Bismut has shown that the equivariant torsion form of the $\BZ$-graded holomorphic de Rham complex of the fibers vanishes in cohomology (\cite{Bi13R}). Puchol proved a formula for asymptotic expansion of torsion forms for high powers of a line bundle (\cite{Puchol}), extended in degree 0 by Finski (\cite{Fi}).
In \cite[Remark 8.11]{Bi12R}, the explicit calculation of torsion forms for projective bundles was stated as an open problem with useful applications. We try to improve the situation by proving the following result in section \ref{FinalProof}:

\begin{theor}\label{mainTorsionformResult}
Let $\pi:E\to B$ be a holomorphic vector bundle of rank 2 over a complex manifold. Consider the formal power series $T_\ell\in \BR[[X]]$ given by
\begin{align*}
T_\ell(-t^2):=&
\sum_{m=1}^{|\ell+1|}\frac{\sin(2m-|\ell+1|)\frac{t}2}{\sin\frac{t}2}\log m
+\left(
\frac{\cos\frac{(\ell+1)t}2)}{t\sin\frac{t}2}
\right)^*
\\&
-\frac{\cos\frac{(\ell+1)t}2}{\sin\frac{t}2}\sum_{m\geq1\atop m{\rm\,odd}}\left(2\zeta'(-m)
+{\cal H}_m\zeta(-m)\right)\frac{(-1)^{\frac{m+1}2}t^m}{m!},
\end{align*}
where $(t^{2m})^*:=t^{2m}\cdot (2{\cal H}_{2m+1}-{\cal H}_m)$ with the harmonic numbers
\begin{equation}\label{harmonic}
{\cal H}_m=\sum_{j=1}^m\frac1j.
\end{equation}
The torsion form for $\mtr {{\cal O}(\ell)}$ on the ${\bf P}^1\BC$-bundle $\pi:{\bf P}E\to B$ is given by
$$
T_\pi(\mtr{{\cal O}(\ell)})=e^{-\frac\ell2c_1(E)}T_\ell(c_1(E)^2-4c_2(E))\in H^{2\bullet}(B,\BR).
$$
\end{theor}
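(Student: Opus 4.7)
The plan is to use the Bismut--Goette comparison formula together with the announced new description of Bismut's equivariant Bott--Chern current at isolated fixed points. Exploiting the natural circle action on the fibers of $\pi:{\bf P}E\to B$, the torsion form can be computed by localization at a pair of fixed sections.

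First I would reduce to the split case $E = L_1\oplus L_2$ by the splitting principle: both sides of the claimed identity are polynomial in $c_1(E)$ and $c_2(E)$, so after pulling back to the full flag bundle where $E$ splits it suffices to verify the formula for direct sums of line bundles. In this split case, $c_1(E)^2 - 4c_2(E) = (c_1(L_1) - c_1(L_2))^2$ is the square of the difference of ``weights,'' and the prefactor $e^{-\ell c_1(E)/2}$ is the symmetric normalization that balances the contributions of the two fixed sections (and that is consistent with Serre duality $\ell\mapsto -\ell-2$, since $\cos((\ell+1)t/2)$ is an even function of $\ell+1$ and $|\ell+1|$ is invariant under this substitution).

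Next, the diagonal $S^1$-action on $E=L_1\oplus L_2$ with weights $(+1,-1)$ descends to a fiberwise action on ${\bf P}E$ with two fixed sections $B_i={\bf P}L_i$, $i=1,2$, isolated in each fiber. Bismut--Goette's formula then expresses the non-equivariant torsion form as an equivariant Bott--Chern current localized at the fixed sections plus a Gillet--Soul\'e $R$-genus contribution. Using the new description of this Bott--Chern current at isolated fixed points, I would compute each local contribution explicitly from the $S^1$-weights $\pm 1$, the equivariant Todd class of the normal line bundle $L_j\otimes L_i^\vee$, and the equivariant character of ${\cal O}(\ell)|_{B_i}=L_i^{\otimes\ell}$. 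Summing the two local terms yields the rational function $\cos((\ell+1)t/2)/\sin(t/2)$, and, after the Mellin-transform regularization used to define analytic torsion, both the finite sum $\sum_{m=1}^{|\ell+1|}$ producing the $\log m$ terms (from the integer eigenvalues of the $S^1$-representation on $H^0({\bf P}^1,{\cal O}(\ell))$) and the regularized piece $\bigl(\cos((\ell+1)t/2)/(t\sin(t/2))\bigr)^*$.

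The main obstacle is the new description of the equivariant Bott--Chern current at isolated fixed points: it must be derived rigorously from the heat-kernel definition and then shown to produce, inside the Mellin-regularized analytic torsion, exactly the harmonic-number combination $2{\cal H}_{2m+1}-{\cal H}_m$ defining the starred regularization and exactly the combination $2\zeta'(-m)+{\cal H}_m\zeta(-m)$ appearing in the $R$-genus correction. Pinning down these precise coefficients---rather than only the general shape---is the delicate step; once that is in place, the rest of the argument is a careful assembly of standard equivariant index-theory ingredients at the two fixed sections, followed by the descent of the identity from the split case to arbitrary rank-$2$ bundles.
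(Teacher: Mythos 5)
Your proposal identifies the correct main ingredients (Bismut--Goette's comparison theorem, the new formula for $S_X$ at isolated fixed points, the $R$-genus contribution, the $\log m$ terms from $H^0({\bf P}^1,{\cal O}(\ell))$), but the assembly strategy differs from the paper's and has genuine gaps.

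The paper does not work on the total space ${\bf P}E$, nor does it invoke the splitting principle. Instead it uses that ${\bf P}E\to B$ is associated to a ${\bf U}(2)$-principal bundle, and relies on the key structural fact \cite[(2.74)]{BG}: for such fibrations, the torsion form is obtained by computing the $\frak g$-equivariant torsion $T_{{\rm id},\rho(Y)}({\bf P}^1\BC,\mtr{{\cal O}(\ell)})$ on the \emph{model fiber} as an $\Ad$-invariant function of $Y\in\frak u(2)$, and then substituting the curvature $-\frac1{2\pi i}\Omega^E$ for $Y$. By $\Ad$-invariance this function depends only on $\Tr Y$ and $\det Y$, whence the appearance of $c_1(E)$ and $c_1(E)^2-4c_2(E)$; the splitting principle is unnecessary. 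Your proposal never mentions this substitution principle, so it is not clear in your sketch how the fiberwise/local computations would be reassembled into the actual torsion form $T_\pi$, a cohomology class on $B$. Note also that your fixed loci $B_i={\bf P}L_i$ are copies of $B$, not isolated points in ${\bf P}E$; the new description of $S_X$ is derived for isolated zeros, so it is only available after reduction to the model fiber, which again points to the need for \cite[(2.74)]{BG} rather than a direct localization on the total space.

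A second gap: Bismut--Goette's Theorem~\ref{BGmain} only computes the \emph{difference} $T_{e^{tX}}-T_{{\rm id},tX}$ in terms of $S_{tX}$ and the $I$-class. To extract the $\frak g$-equivariant torsion $T_{{\rm id},tX}$ itself one must know the group-equivariant torsion $T_{e^{tX}}({\bf P}^1\BC,\mtr{{\cal O}(\ell)})$ as an independent input; the paper takes this from \cite[Th.\,2]{K1} (eq.~(\ref{eqTorsionP1}), with the function $R^{\rm rot}$ from \cite[Prop.\,1]{K1}). You gesture at the $\log m$ terms and the eigenvalues of the $S^1$-action on cohomology, which is part of that input, but your proposal never names this as a separate required ingredient. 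Without it the localization step yields only a comparison, not a value, and the cancellations that eliminate $\log t$ and the negative powers of $t$ in Theorem~\ref{gEquivTorsion} cannot be seen.

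Finally, a more minor technical point: the extra factor $e^{-\ell c_1(E)/2}$ is derived in the paper from the center of $\frak u(2)$ acting on ${\cal O}(\ell)$ via the Euler sequence, i.e.\ from the distinction between the torsion with respect to the vector field $\rho(Y)$ and the torsion with respect to the Lie algebra element $Y$. Your symmetry heuristic (Serre duality $\ell\mapsto -\ell-2$) is consistent but does not derive the factor; in the paper's framework the factor is forced by the substitution of $\Tr Y\mapsto c_1(E)$, not by a normalization choice.
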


Our method is as follows. It has been pointed out (after Atiyah-Singer (\cite[p. 133-134]{AS4})) by Bismut (\cite[p. 100-101]{Bi1}), Bismut-Gillet-Soul\'e (\cite{BGS2}) and Berline-Getzler-Vergne (\cite[ch. 10.7]{BeGeVe}) that Bismut super connections and torsion forms can be understood in a more accessible way if the fibration $\pi:M\to B$ and the associated objects are induced by a principle bundle $P\to B$ with compact structure group $G$. Bismut-Goette (\cite{BG}) use this to describe the torsion form in such a setting as a cohomology class which can be interpreted in terms of a $\frak g$-equivariant analytic torsion.

Bismut-Goette's main result relates this $\frak g$-equivariant analytic torsion to the $G$-equivariant analytic torsion introduced in \cite{K1} via Bismut's equivariant Bott-Chern current $S$. The construction of this Bott-Chern current was inspired by Mathai-Quillen's  very influential construction of a Gau\ss{} shape representative of the Thom class (\cite{MQ}). The crucial Gau\ss{} density in this construction makes explicit integration in our example difficult, and thus our strategy is to replace it by an indicator function closer to Thom's original construction (Th. \ref{SFormel1}). We do this in a general setting for isolated fixed points as this construction shall be applied to more general spaces in a forthcoming paper.

We also employ the formula for $S$ to demonstrate the usage of the residue formula in Arakelov theory (\cite[Th. 2.11]{KR2}) by applying it to ${\bf P}^1_\BZ$ in section \ref{TheheightofP1Z}. This residue formula (\`a la Bott) has never been applied before as the $S$-current makes explicit evaluations difficult. In Theorem \ref{g-eq.Torsion} we give an explicit formula for the $\frak g$-equivariant analytic torsion on ${\bf P}^1\BC$. In Theorem \ref{Gg-eq.Torsion}, we extend this to $(\frak g,G)$-equivariant analytic torsion providing the $G$-equivariant torsion form introduced in \cite{Ma}. In Remark \ref{vgl.bekannteTorsion} we verify that the degree 0 part of the formula in Theorem \ref{mainTorsionformResult} equals the known value of the Ray-Singer analytic torsion as given in \cite[Theorem 18]{K2}. Theorem \ref{CompareARR1} shows that the last summand in Theorem \ref{mainTorsionformResult} exactly cancels with another term in the arithmetic Grothendieck-Riemann-Roch Theorem from \cite{GRS}.

The author is indebted to the referee for a careful reading of this paper and for his comments.

\section{Equivariant characteristic classes}
Let $M$ be a complex manifold. Corresponding to the decomposition $TM\otimes\BC=TM^{1,0}\oplus TM^{0,1}$ define $U=U^{1,0}+U^{0,1}$ for $U\in TM\otimes\BC$. Let $\frak A^{p,q}(M)$ denote the vector space of forms of holomorphic degree $p$ and anti holomorphic degree $q$, and let $\frak A^{p,q}(M,E)$ denote the corresponding forms with coefficients in a holomorphic vector bundle $E$. Let $X\in\Gamma(M,TM)$ be a vector field such that its local flow acts holomorphically on $M$, i.e. $X^{1,0}$ is a holomorphic section of $T^{1,0}M$.

An $X$-equivariant holomorphic vector bundle $E$ equipped with an $X$-invariant Hermitian metric shall be denoted by $\mtr E$. Let $\nabla^E$ be the associated Chern connection with curvature $\Omega^E\in\frak A^{1,1}(M,\End\, E)$.

Following \cite[(2.7)]{BG} we denote by
$m^E(X):=\nabla^E_{X}-L^E_{X}\in\Gamma(M,\End\,E)$ the moment map as the skew adjoint endomorphism given by the 
difference between
the Lie derivative and the covariant derivative
on
$E$. In particular, for the flow $\Phi_t^X$ associated to $X$ and a zero $p$ of $X$, $m^{TX}(X)(p)=\left.\frac\partial{\partial t}\right\rvert_{t=0}\Phi^X_t(p)\in\End\, T_pM$.
Set as in \cite[(2.30), Def. 2.7]{BG} (compare \cite[ch. 7]{BeGeVe})
$$
\Td_{X}(\mtr {E}):=\Td\left(-\frac{\Omega^{E}}{2\pi i}+m^{E}(X)\right)
\in{\frak A}(M)
$$
and
$$
\ch_{X}(\mtr E):=\Tr \exp\left(-\frac{\Omega^E}{2\pi i}+m^E(X)\right)\in{\frak 
A}(M).
$$
The Chern class $c_{q,X}(\mtr E)$ for $0\leq q\leq \rk E$ is defined in \cite[Def. 2.5]{KR2} as the 
part of
total degree
$\deg_M+\deg_t=q$ of
$$
\det\left(\frac{-\Omega^E}{2\pi i}+tm^E(X)+{\rm id}\right)\in{\frak A}(M)
$$
at $t=1$, thus $c_{q,X}(\mtr E)=c_q(-\Omega^E/2\pi i+m^E(X))$. For $m^E$ invertible we set (\cite[(3.10)]{BG})
\begin{equation}
(c_{\top,X}^{-1})'(\mtr E):=\left.\frac{\partial}{\partial
b}\right\rvert_{b=0}c_{\rk E}\left(\frac{-\Omega^E}{2\pi i}+m^E(X)+b\, {\rm id}\right)^{-1}.
\end{equation}
The bundle $E$ splits
at every component of the fixed point set $M_X:=\{p\in M\mid X_p=0\}$ into a sum of holomorphic vector bundles
$\bigoplus E_\theta$ associated to eigenvalues $i\theta\in i{\bf R}$ of $m^E$. Let $I_{X}\in H^\bullet(M_X)$ denote the additive equivariant characteristic class which is given 
for a
line bundle $L$ as follows: If $X'$ acts at the fixed point $p$ by an angle
$\theta'\in{\bf R}^\x$ on
$L$, then
\begin{equation}
\left.I_{X'}(L)\right\rvert_{p}:=\sum_{k\in\BZ^\x}\frac{\log(1+\frac{\theta'}{2\pi 
k})}{c_1(L)+i\theta'+2 k\pi
i}.
\end{equation}

Next consider a holomorphic action $g$ on $M$. Assume that $E$ is $g$-invariant as a holomorphic Hermitian bundle and that $E$ is equipped with an equivariant structure $g^E$. The
Hermitian vector bundle $\mtr E$ splits on the fixed point submanifold $M_g$ into a direct sum
$\bigoplus_{\zeta\in S^1}\mtr E_\zeta$, where the equivariant structure $g^E$ of $E$
acts on
$\mtr E_\zeta$ as $\zeta$. Then the \emph{$g$-equivariant Chern character form} is defined as
\begin{eqnarray*}
\ch_g(\mtr E)&:=&\sum_\zeta \zeta\ch(\mtr E_\zeta)\\ &=&\Tr g^E+\sum_\zeta \zeta
c_1(\mtr E_\zeta)+\sum_\zeta \zeta \left(\frac{1}{2}c_1^2(\mtr E_\zeta) -c_2(\mtr
E_\zeta)\right)+\dots\in{\frak A}(M_g)
.
\end{eqnarray*}
Thus, $\widetilde\ch_g(\mtr E)=\sum_\zeta
\zeta\widetilde\ch(\mtr E_\zeta)$.  
With the $g$-invariant subbundle $E_1\to M_g$, the Todd form of a $g$-equivariant vector bundle is defined as
$$
\Td_g(\mtr E):=\frac{c_{{\rm rk}\,E_1}(\mtr E_1)}
{\ch_g(\sum_{j=0}^{{\rm rk}\,E} (-1)^j \Lambda^j \mtr E^*)}.
$$
\section{Analytic torsion forms}
In this section we describe the definition of equivariant Ray-Singer analytic torsions and analytic torsion forms. We simplify the more general setting in \cite{BK} a bit for the sake of exposition, as we shall use the torsion forms in this article only in a very restricted setting.

Let $M$ be a compact K\"ahler manifold of complex dimension $\pp$ with K\"ahler form $\omega^{TM}\in\frak A^{1,1}(M)$. We choose the K\"ahler form such that it verifies the condition $\omega^{TM}(U,V)=g^{TM}(JU,V)$ (note that \cite{BK}, \cite[p. 1302]{BG} use $-\omega^{TM}$ instead as the K\"ahler form). Thus $\omega^{TM}_p=\sum_{j=0}^{\pp}dx_{2j-1}\wedge dx_{2j}$ in geodesic coordinates at an origin $p$. For $U\in T^{0,1}M$, $q\in\BN_0$, let $\iota_U:\Lambda^{q} T^{*0,1}M\to \Lambda^{q-1} T^{*0,1}M$ denote the interior product antiderivation.
Define fibrewise an action of the Clifford algebra assciated to $(TM,g^{TM})$ on $\Lambda^\bullet T^{*0,1}M\otimes E$ by
$$
c(U):=\sqrt{2}(g^{TM}(\cdot, U^{1,0})\wedge)-\sqrt{2}\iota_{U^{0,1}}\qquad(U\in TM).
$$
Let $N_\infty:\Lambda^\bullet T^*M\otimes E\to\BN_0$ map each component to its differential form degree. Consider a holomorphic isometric action of a Lie group $G$ on $M$. Consider $g\in G$ and a vector field $X$ induced by an element of the Lie algebra $\frak z_{G}(g)\subset\frak g$ of the centralizer of $g$. As above let $\bar E\to M$ be an $X$-equivariant Hermitian holomorphic vector bundle.
Assume that the action of $X$ on $(M,\omega)$ is Hamiltonian, i.e. there exists a function $\mu\in C^\infty(M,\BR)$ such that $d\mu=\iota_X\omega$. This implies 
$X.\mu=0$, and for $M$ connected $\mu$ is uniquely determined up to constant. If $L\to M$ is an $X$-invariant polarized variety and $\omega:=i\Omega^L$, one can choose $\mu:=-i\cdot m^L$. With the Dolbeault operator associated to $E$, set as in \cite[(2.40)]{BG}
$$
C_{X,t}^M:=\sqrt{t}(\bar\partial^M+\bar\partial^{M*})
+\frac1{2\sqrt{2t}}c(X)
$$
acting on $\frak A^{0,\bullet}(M,E)$.
\begin{defin}\rm (\cite[p. 1319]{BG}) 
For $s\in\BC$, $\Re s\in]0,\frac12[$ and $|X|$ sufficiently small, the zeta function
\begin{eqnarray*}
Z(s)&:=&\frac{-1}{\Gamma(s)}\int_0^\infty t^{s-1}
\Bigg(\Tr_s\left(
N_\infty-\frac{i \mu}{t}
\right)g\\
&&\cdot\exp(-L_X-(C_{X,t}^M)^2)
-\Tr_s^{H^\bullet(M,E)}(N_\infty ge^X)
\Bigg)\,dt.
\end{eqnarray*}
is well-defined and $Z$ has a holomorphic continuation to $s=0$. The \emph{$(g,X)$-equivariant complex Ray-Singer torsion} is defined as $T_{g,X}(M,\mtr E):=\left.\frac\partial{\partial s}\right\rvert_{s=0}Z(s)$.
\end{defin}
The \emph{$g$-equivariant torsion $T_g(M,\mtr E)$} was defined in \cite{K1}, and Bismut-Goette's Definition extends this such that $T_g(M,\mtr E)=T_{g,0}(M,\mtr E)$.

\begin{defin}\rm(\cite[Def. 1.4]{BGS2})
Let $\pi:M\stackrel{Z}\to B$ be a proper holomorphic submersion of complex manifolds $M$, $B$. Let $TZ$ and $TZ^\perp$ denote the vertical tangent bundle and the horizontal distribution othogonal to it, respectively. Suppose that there exists a closed 2-form $\omega\in\frak A^{1,1}(M)$ such that $g^{TZ}:=\left.\omega\right\rvert_{TZ^{\otimes2}}(\cdot,J\cdot)$ is Hermitian. Then $(\pi,g^{TZ},TZ^\perp)$ is called a \emph{K\"ahler fibration}.\end{defin}

For $b\in B$ set $Z_b:=\pi^{-1}(\{b\})$. For any $U\in T_bB$ we denote by $U^H\in (TZ_b)^\perp\subset TM$ the horizontal lift to the orthogonal complement of the vertical tangent space. Let $g^{TB}$ be a metric on $B$, inducing $\nabla^{(TZ^\perp)}$. Set $\nabla^{TM}:=\nabla^{(TZ^\perp)}\oplus\nabla^{TZ}$ with torsion ${\cal T}\in{\frak A}^{1,1}(M,TZ)$. Set $\omega^H\in{\frak A}^{1,1}(B)\otimes C^\infty(M)$, $\omega^H(U,U'):=\omega(U^H,{U'}^H)$, ${\cal T}^H(U,U'):={\cal T}(U^H,{U'}^H)$. Let $\bar \E\to M$ be an Hermitian holomorphic vector bundle. 
Let $F\to B$ denote the $\infty$-dimensional vector bundle with fibre
$$
F_b:=\Gamma^\infty(Z_b,\Lambda^\bullet T^{*0,1}Z_b\otimes \left.\E\right\rvert_{Z_b})
$$
and connection $\nabla^F_Us:=\nabla^{\Lambda^\bullet T^{*0,1}Z\otimes \E}_{U^H}s$.

\begin{defin}\rm(\cite[Def. 1.8]{BK})
For $t\in\BR^+$, the \emph{number operator} $N_t\in\Gamma(B,\Lambda^\bullet T^*B\otimes\End\, F)$ is given by
$$
N_t:=N_\infty-\frac{i\omega^H}{t}.
$$

The \emph{Bismut super connection} on $F$ is defined using the Clifford operation $c$ of the $TZ$ component of $\cal T$ on $\Lambda^\bullet T^{*0,1}Z$ as
$$
B_t:=\nabla^F+C^Z_{-{\cal T}^H,t}.
$$
\end{defin}
The operator $B_t$ is formed as an adiabatic limit of the Dirac operator on $M$. As differential forms on the manifold $B$, the summands have the degrees $1,0,2$. 
\begin{defin}\rm(\cite[Def. 3.8]{BK})
Set $\tilde{\frak A}(B):=\bigoplus_p{\frak A^{p,p}(B)}/(\im\partial+\im\bar\partial)$. Assume $H^\bullet(Z_\cdot,\left.E\right\rvert_{Z_\cdot})\to B$ to be vector bundles. For $|\Re s|<\frac12$ set (regularized as in \cite[(3.10)]{BK})
\begin{eqnarray*}
Z(s)&:=&\frac{-1}{\Gamma(s)}\int_0^\infty t^{s-1}(2\pi i)^{-N_\infty/2}\\
&&\cdot\left(\Tr_s N_t e^{-B_t^2}-\Tr_sN_\infty e^{-\Omega^{H^\bullet(Z_\cdot,{\cal E})}}\right)\,dt\in\bigoplus_p{\frak A^{p,p}(B)}.
\end{eqnarray*}
The \emph{analytic torsion form} associated to the K\"ahler fibration $\pi$ and $\bar\E$ is defined as
$
T_\pi(\bar\E):=Z'(0)\in\tilde{\frak A}(B).
$
\end{defin}

In degree 0 one gets $\left.T_\pi(\bar\E)^{[0]}\right\rvert_{b}=T_{\rm id}(Z_b,\left.\bar\E\right\rvert_{Z_b})$ with the Ray-Singer torsion on the right hand side.

{\bf Examples} 1) (\cite[Th. 4.1]{K3})
Let $\bar E\to B$ be an Hermitian holomorphic vector bundle of rank $k$, $\Lambda\subset E$ a $\BZ^{2k}$-bundle with holomorphic local sections.  Then $\pi:M:=E/\Lambda\to B$ is a torus bundle. For $\Re s<0$ set
$$
Z(s):=\frac{\Gamma(2k-1-s)}{(2\pi)^k(k-1)!\Gamma(s)}\sum_{\lambda\in\Lambda\atop\lambda\neq0}
\frac{(\partial\bar\partial\|\lambda\|^2)^{\wedge(k-1)}}
{(\|\lambda\|^2)^{2k-s-1}}
\in{\frak A}^{k-1,k-1}(B).
$$
Then $T_\pi({\cal O})=\frac{Z'(0)}{\Td(\bar E)}\in\tilde{\frak A}(B)$. A K\"ahler fibration condition is not necessary. 

2) Mourougane considered Hirzebruch surfaces
$$\pi:F_k={\bf P}({\cal O}(-k)\oplus{\cal O})\to{\bf P}^1\BC$$
and obtained 0 as the part of $T_\pi({\cal O})$ in degree 2 (\cite[p. 239]{Mourougane}).

\section{\texorpdfstring{Bismut's equivariant Bott-Chern current}{Definition of S}}
The equivariant Bott-Chern current has been introduced and investigated by Bismut in \cite{Bi8}, \cite{Bi9}. In this section we briefly cite some of its properties following the presentation in \cite{BG}. 
In the special case of isolated fixed points we shall necessarily obtain these results independently in the next section to obtain our expression for $S_X$.
Let $M$ be a compact K\"ahler manifold acted upon by a holomorphic Killing field $X$. Denote (\cite[p. 1312, Def. 2.6]{BG})
$$
d_X:=d-2\pi i\iota_{\XM},\quad
\partial_X:=\partial-2\pi i\iota_{X^{0,1}},\quad
\bar\partial_X:=\bar\partial-2\pi i\iota_{X^{1,0}}.
$$
The holomorphy of $X$ implies $\partial_X^2=0$, $\bar\partial_X^2=0$. Notice that $d_X^2=-2\pi i L_{\XM}$.

Let $N^*_{\BR}$ denote the dual of the real normal bundle of the embedding $M_X\hookrightarrow M$. Let (\cite[Def. 3.5]{BG}) $P_{X,M_X}^M$ be the set of currents $\alpha$ on $M$ with wave front set included in $N^*_{\BR}$, such that $\alpha$ is a sum of currents of type $(p,p)$ and $L_{\XM}\alpha=0$. Let $P_{X,M_X}^{M,0}\subset P_{X,M_X}^M$ denote the subset consisting of those $\alpha=\partial_X\beta+\bar\partial_X\beta'$, where $\beta,\beta'$ are $\XM$-invariant currents whose wave front set is included in $N^*_{\BR}$. We shall use the notation $X^\flat\in \Gamma(M,T^*M)$ for the metric dual of a vector field $X\in \Gamma(M,TM)$.
Set (\cite[p. 1322, Def. 3.3]{BG})
\begin{eqnarray}\nonumber\label{dt}
d_t&:=&\frac{\omega^{TM}}{2\pi t}\exp\left(\frac{\bar\partial_X\partial_X}{2\pi i t}\frac{-\omega^{TM}}{2\pi}\right)
\stackrel{\rm \cite[Prop.\,3.2]{BG}}=\frac{\omega^{TM}}{2\pi t}\exp\left(d_X\frac{\XM^\flat}{4\pi i t}\right).
\\&=&\frac{\omega^{TM}}{2\pi t}\exp\left(\frac1t(\frac{d(\XM^\flat)}{4\pi i}-\frac12\|\XM\|^2)\right).
\end{eqnarray}
Then (\cite[(3.9)]{BG}) there is a current $\rho_1\in P^M_{X,M_X}$ such that for $t\to0^+$ and any $\eta\in\frak A(M)$,
\begin{eqnarray}\label{3.9BG}
\int_M\eta\cdot d_t&=&\frac1t\int_{M_X}\eta\cdot\frac{\omega^{TM}/2\pi}{c_{\top,X}(N_{M_X/M})}
+\int_M\eta\cdot\frac{\omega^{TM}}{2\pi}\rho_1+O(t).
\end{eqnarray}
By equation (\ref{dt}), $F^2_\eta(s):=\frac1{\Gamma(s)}\int_1^\infty(\int_M\eta d_t)t^{s-1}dt$ is well-defined and holomorphic for ${\rm Re\,}s<1$ (\cite[(3.13)]{BG}). 
Similarly equation (\ref{3.9BG}) shows that
$$
F^1_\eta(s):=\frac1{\Gamma(s)}\int_0^1(\int_M\eta d_t)t^{s-1}dt,\qquad
$$
is well-defined for ${\rm Re\,}s>1$ and that it has a holomorphic continuation to $s=0$.
Thus one can set (\cite[p. 1324, Def. 3.7]{BG}) $\int_M\eta S_X(M,-\omega^{TM}):=\left.\frac\partial{\partial s}\right\rvert_{s=0}(F^1_\eta+F^2_\eta)$.
By \cite[Th. 3.9]{BG} $S_X(M,-\omega^{TM})\in P^M_{X,M_X}$. By \cite[Prop. 3.8]{BG} or \cite[Prop. 2.11]{Bi9} one gets
\begin{eqnarray*}
\lefteqn{\int_M\eta S_X(M,-\omega^{TM})}\\
&=&
\int_0^1\int_M\eta\left(
d_t-\frac{\omega^{TM}}{2\pi t c_{\top,X}(N_{M_X/M},g^{TM})}\delta_{M_X}-\frac{\omega^{TM}}{2\pi}\rho_1
\right)\frac{dt}t
\\&&+\int_1^\infty(\int_M\eta d_t)\frac{dt}t-\int_{M_X}\eta\frac{\omega^{TM}}{2\pi c_{\top,X}(N_{M_X/M},g^{TM})}
-\Gamma'(1)\int_M\eta\frac{\omega^{TM}}{2\pi}\rho_1.
\end{eqnarray*}

According to \cite[p. 1323, Th. 3.6]{BG} (or \cite[(40)-(49)]{Bi8},\cite[Th. 2.7]{Bi9})
\begin{equation}\label{3.6BG}
\frac{\omega^{TM}}{2\pi}\rho_1=(c_{\top,X}^{-1})'(N_{M_X/M},g^{TM})\delta_{M_X}
\end{equation}
up to currents in $P^{M,0}_{X,M_X}$. Thus when $\eta$ is a $d_X$-closed form,
\begin{eqnarray}\nonumber
\lefteqn{\int_M\eta S_X(M,-\omega^{TM})}
\\&=&
\int_0^1\int_M\eta\left(
d_t-\frac{\omega^{TM}}{2\pi t c_{\top,X}(N_{M_X/M},g^{TM})}\delta_{M_X}-(c_{\top,X}^{-1})'(N_{M_X/M},g^{TM})\delta_{M_X}
\right)\frac{dt}t
\nonumber
\\&&+\int_1^\infty(\int_M\eta d_t)\frac{dt}t-\int_{M_X}\eta\frac{\omega^{TM}}{2\pi c_{\top,X}(N_{M_X/M},g^{TM})}
\nonumber
\\&&-\Gamma'(1)\int_{M_X}\eta(c_{\top,X}^{-1})'(N_{M_X/M},g^{TM}).
\end{eqnarray}
By replacing the first integral by $\lim_{a\to0}\int_a^1$ and using $\int_a^1\frac{dt}{t^2}+1=\frac1a$, $\int_a^1\frac{dt}t=-\log a$, $\int_a^\infty e^{c/t}\frac{dt}{t^2}\stackrel{\Re c<0}=\frac{e^{c/a}-1}c$ (and the variant of the last equation for $\int_a^\infty e^{c/t}\frac{dt}{t^{2+m}}$, $m\geq0$) as in \cite[p. 96]{KR2} this becomes
\begin{multline}\label{limaSX}
\int_M\eta S_X(M,-\omega^{TM})=\lim_{a\to0^+}
\Bigg(\int_M\eta \frac{-\omega^{TM}}{2\pi}\cdot\frac{1-\exp(d_X\frac{\XM^\flat}{4\pi i a})}{d_X\frac{\XM^\flat}{4\pi i}}
\\+\int_{M_X}\eta\left(\frac{-\omega^{TM}}{2\pi a c_{\top,X}(N_{M_X/M},g^{TM})}
+(\log a-\Gamma'(1))(c_{\top,X}^{-1})'(N_{M_X/M},g^{TM})\right)\Bigg).
\end{multline}

We shall need in the case of isolated fixed points a sharper version of equations (\ref{3.9BG}), (\ref{3.6BG}): $\int_M\eta\cdot d_t=\int_{M_X}\eta (c_{\top,X}^{-1})'(TM)+O(t)$ for any smooth form $\eta$ (Lemma \ref{BGVerify}) and not only for $d_X$-closed forms.

The dependence of $S_X(M,-\omega^{TM})$ on $\omega^{TM}$ is analysed in \cite[Th. 3.10]{BG}:
\begin{theor}\label{BG3.10}\cite[Th. 3.10]{BG} For K\"ahler forms $\omega^{'TM},\omega^{TM}$ on $M$ and the induced metrics $g^{'TM},g^{TM}$ on $N_{M_X/M}$,
$$
S_X(M,-\omega^{'TM})-S_X(M,-\omega^{TM})=-\widetilde{c^{-1}_{\top,X}}(N_{M_X/M},g^{'TM},g^{TM})\cdot\delta_{M_X}
$$
in $P^M_{X,M_X}/P^{M,0}_{X,M_X}$.
\end{theor}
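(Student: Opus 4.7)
\emph{Proof plan.} The plan is a one-parameter variation argument of Bott-Chern type.

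First I would connect $\omega^{TM}$ to $\omega^{'TM}$ through a smooth path of $X$-invariant K\"ahler forms $\omega_s^{TM}$, $s\in[0,1]$; the convex combination $(1-s)\omega^{TM}+s\omega^{'TM}$ suffices, since a positive linear combination of $X$-invariant K\"ahler forms is again an $X$-invariant K\"ahler form, and it induces smooth families $g_s^{TM}$ and $X_s^\flat$. All objects entering \refeq{dt} and \refeq{limaSX} become smooth functions of $s$, while $d_X,\partial_X,\bar\partial_X$ and $M_X$ are $s$-independent. Setting $S_X(s):=S_X(M,-\omega_s^{TM})$, one has
$$S_X(M,-\omega^{'TM})-S_X(M,-\omega^{TM})=\int_0^1\frac{d}{ds}S_X(s)\,ds.$$

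For $\eta\in{\frak A}(M)$ with $d_X\eta=0$ (the relevant test class in view of \refeq{limaSX}), I would differentiate \refeq{limaSX} in $s$. Using the $X$-invariance of $\omega_s^{TM}$ and $X_s^\flat$ together with the derivation property of $d_X$ applied to the two forms of $d_{s,t}$ in \refeq{dt}, the $s$-derivative of $d_{s,t}$ splits as $d_X$ applied to a smooth $X$-invariant form plus a residual piece $R_{s,t}$ whose support and wave front concentrate near $M_X$. Pairing against $d_X$-closed $\eta$ annihilates the $d_X$-exact summand (which ends up in $\im\partial_X+\im\bar\partial_X$), so only $R_{s,t}$ together with the $s$-derivatives of the two explicit $M_X$-terms in \refeq{limaSX} contribute to $\frac{d}{ds}\int_M\eta\, S_X(s)$.

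Next I would carry out the Mellin integral in $t$ of $R_{s,t}$. The divergent $1/a$-piece produced by the bulk integral should cancel against the $s$-derivative of $\int_{M_X}\eta\frac{-\omega_s^{TM}}{2\pi a\, c_{\top,X}(N_{M_X/M},g_s^{TM})}$, as a direct consequence of the local model underlying \refeq{3.9BG} differentiated in $s$. The remaining $\log a$-piece then assembles with the $s$-derivative of $(\log a-\Gamma'(1))(c^{-1}_{\top,X})'(N_{M_X/M},g_s^{TM})$. After both cancellations the finite remainder on $M_X$ is precisely the integrand of the Chern-Weil transgression of $c^{-1}_{\top,X}(N_{M_X/M},g_s^{TM})$ between $s=0$ and $s=1$. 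Integrating over $s\in[0,1]$ yields $-\widetilde{c^{-1}_{\top,X}}(N_{M_X/M},g^{'TM},g^{TM})\cdot\delta_{M_X}$ by the defining property of the Bott-Chern secondary class; independence of the chosen path modulo $P^{M,0}_{X,M_X}$ follows because any two paths differ by a contribution that is $\partial_X$- or $\bar\partial_X$-exact.

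The hard part will be the cancellation step: isolating the genuinely $M_X$-supported piece of $\frac{\partial d_{s,t}}{\partial s}$ and showing that its Mellin transform matches term-by-term, in the $1/a$ and $\log a$ expansion, the $s$-variation of the explicit $M_X$-contributions in \refeq{limaSX}. This reduces to a local computation in a normal neighbourhood of each component of $M_X$ of the same flavour as the model analysis underlying \cite[Prop. 3.8, Th. 3.6]{BG}; in the isolated fixed point case treated in the following section of this paper such a local computation becomes particularly transparent, which gives a concrete handle on this main obstacle.
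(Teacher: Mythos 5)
The paper does not prove this statement; Theorem \ref{BG3.10} is quoted verbatim from \cite[Th. 3.10]{BG} and used as a black box. The only independent verification the paper offers is of a very special consequence: Corollary \ref{ScalingSXeigene} checks the rescaling case $\omega^{'TM}=c^2\omega^{TM}$ (equivalently, rescaling of $X$) by direct computation from the isolated-fixed-point formula in Theorem \ref{SFormel1}. So there is no paper proof of the general anomaly formula to compare against.

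Your proposal outlines the standard variation-of-metric template for Bott--Chern anomaly formulas: connect $\omega^{TM}$ to $\omega^{'TM}$ by a convex path, differentiate in $s$, and identify the derivative with a transgression supported on $M_X$. This is the right shape of argument and is, in spirit, what Bismut--Goette do. However, your write-up leaves the genuine content unproven precisely where the difficulty lies. The key assertion that $\partial_s d_{s,t}$ ``splits as $d_X$ applied to a smooth $X$-invariant form plus a residual piece $R_{s,t}$ whose support and wave front concentrate near $M_X$'' cannot hold literally at fixed $t>0$: $d_{s,t}$ and hence $\partial_s d_{s,t}$ are smooth forms on all of $M$, and any concentration near $M_X$ is an asymptotic statement in $t\to 0^+$. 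What is actually needed is a double transgression in $(s,t)$ together with control of the Mellin-regularized $t$-integral, and that is precisely the step you defer. Likewise, the claimed cancellation of the $1/a$- and $\log a$-divergences against the $s$-derivatives of the two explicit $M_X$-terms in \refeq{limaSX}, and the identification of the finite remainder with the Chern--Weil transgression integrand of $c^{-1}_{\top,X}$, are asserted but not verified; you must also account for the $s$-dependence of $\rho_1$ in \refeq{3.9BG}, which your sketch does not mention. In short: the plan is plausible and likely parallels \cite{BG}, but the proposal has a genuine gap at the two places it itself flags as ``the hard part,'' and therefore does not yet constitute a proof.
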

An important special case arises when rescaling $\omega^{'TM}=c^2\omega^{TM}$: We shall use the notation $\alpha^{[q]}$ for the part in degree $q$ of a differential form $\alpha$. Because of $\widetilde{\ch^{[q]}}(E,c^2h^E,h^E)=\left(\widetilde\ch(\BC,c^2|\cdot|^2,|\cdot|^2)\ch(E,h^E)\right)^{[q-2]}$ by \cite[(1.3.5.2)]{GS} and $\widetilde\ch(\BC,c^2|\cdot|^2,|\cdot|^2)=-\log c^2$ (\cite[(1.2.5.1)]{GS}),
one finds
\begin{eqnarray*}
\widetilde{\ch^{[q]}}(E,c^2h^E,h^E)
&=&-\log c^2\cdot \ch(E,h^E)^{[q-2]}
\\&=&-\log c^2\cdot\left.\frac\partial{\partial t}\right\rvert_{t=0}\ch^{[q]}\left(-\frac{\Omega^E}{2\pi i}+t{\rm id}_E\right).
\end{eqnarray*}
Thus this relation holds when replacing $\ch^{[q]}$ by any other polynomial in the Chern classes, in particular
$$
-\widetilde{c^{-1}_{\top,X}}(N_{M_X/M},c^2g^{TM},g^{TM})=\log c^2\cdot(c^{-1}_{\top,X})'(N_{M_X/M},g^{TM}).
$$
This way Theorem \ref{BG3.10} implies a useful formula for the dependence of $S_{tX}$ on $t$, which we shall verify independently for isolated fixed points on the level of currents in Corollary \ref{ScalingSXeigene}:
\begin{cor}\label{ScalingSX}
Let $N_\infty:\frak A^\bullet(M)\to \BN_0$ denote the number operator. Then
$$
c^{N_\infty/2+1}S_{cX}(M,-\omega^{TM})-S_X(M,-\omega^{TM})=\log c^2\cdot(c^{-1}_{\top,X})'(N_{M_X/M},g^{TM})\cdot\delta_{M_X}
$$
in $P^M_{X,M_X}/P^{M,0}_{X,M_X}$.
\end{cor}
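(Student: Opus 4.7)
The plan is to deduce Corollary~\ref{ScalingSX} from Theorem~\ref{BG3.10} via a clean scaling identity for $d_t$. The key observation is that rescaling the vector field $X\mapsto cX$ with the metric fixed, and rescaling the K\"ahler form $\omega^{TM}\mapsto c^2\omega^{TM}$ (i.e., $g^{TM}\mapsto c^2g^{TM}$) with $X$ fixed, produce the same current $d_t$ on $M$ up to a degree-wise scalar $c^{-N_\infty/2-1}$. First, I would prove the pointwise identity
$$
d_t(cX,\omega^{TM})=c^{-N_\infty/2-1}\,d_t(X,c^2\omega^{TM})
$$
directly from the explicit formula (\ref{dt}). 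On the right, $X^\flat\mapsto c^2X^\flat$, $\|X\|^2\mapsto c^2\|X\|^2$, and the prefactor $\omega^{TM}/(2\pi t)$ acquires an extra $c^2$; on the left, $X^\flat\mapsto cX^\flat$, $\|X\|^2\mapsto c^2\|X\|^2$, and the prefactor is untouched. Expanding the exponential and comparing components of form-degree $2j+2$, the two sides differ by exactly $c^{-j-2}$, which is the action of $c^{-N_\infty/2-1}$ on such a component.

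Second, I would propagate this identity through the definition of $S_X$. Since $c^{-N_\infty/2-1}$ acts as a constant scalar on each fixed form-degree, it commutes with every linear operation in the definition of $S_X$---multiplication by $t^{s-1}$, Mellin integration $\int_0^\infty(\cdot)\,dt$, division by $\Gamma(s)$, and the derivative $\partial_s|_{s=0}$---and the zeta-regularization is compatible with this degree decomposition. This yields at once the current identity
$$
c^{N_\infty/2+1}S_{cX}(M,-\omega^{TM})=S_X(M,-c^2\omega^{TM})\qquad\text{in }P^M_{X,M_X}.
$$
Third, I would apply Theorem~\ref{BG3.10} to the pair of K\"ahler forms $c^2\omega^{TM}$ and $\omega^{TM}$, which gives
$$
S_X(M,-c^2\omega^{TM})-S_X(M,-\omega^{TM})=-\widetilde{c^{-1}_{\top,X}}(N_{M_X/M},c^2g^{TM},g^{TM})\cdot\delta_{M_X}
$$
in $P^M_{X,M_X}/P^{M,0}_{X,M_X}$, and combine with the identity $-\widetilde{c^{-1}_{\top,X}}(N_{M_X/M},c^2g^{TM},g^{TM})=\log c^2\cdot(c^{-1}_{\top,X})'(N_{M_X/M},g^{TM})$ derived in the paragraph immediately preceding the corollary. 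This concludes the proof.

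The only delicate point---and thus the main obstacle---is the degree bookkeeping around the operator $c^{N_\infty/2+1}$: one must consistently interpret it as multiplying a homogeneous $2p$-current by $c^{p+1}$, and verify that under this convention the pointwise identity of Step~1 propagates through the Mellin transform of Step~2 with exponents matching degree-by-degree. Apart from this consistency check, the argument is a short composition of the scaling identity with Theorem~\ref{BG3.10}, with no genuinely new analytical input.
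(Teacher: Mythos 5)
Your proof is correct and matches the paper's argument: both derive the scaling identity $d_t(cX,\omega^{TM}) = c^{-N_\infty/2-1}d_t(X, c^2\omega^{TM})$ directly from the explicit formula for $d_t$, push it degree-by-degree through the zeta-regularized definition of $S$, and then invoke Theorem \ref{BG3.10} together with the preceding logarithmic identity for $\widetilde{c^{-1}_{\top,X}}$. The paper presents the $d_t$ scaling via the intermediate time-reparametrization $\tilde d_t = c^{-N_\infty/2-1}d_{t/c^2}$ rather than comparing at the same time $t$, but this is equivalent to your pointwise metric-rescaling identity since $d_t(X, b\omega^{TM}) = d_{t/b}(X, \omega^{TM})$.
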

Note that $P^M_{X,M_X},P^{M,0}_{X,M_X}$ are invariant under rescaling of $X$.
\begin{proof}
When replacing $\omega^{TM}$ by $\omega':=b\omega^{TM}$ with $b\in\BR^+$, the corresponding form $d_t'$ is given by $d_t'=d_{t/b}$. On the other hand when replacing $X$ by $\tilde X:=cX$ with $c\neq0$, the associated form $\tilde d_t$ equals
\begin{eqnarray*}
\tilde d_t&=&\frac{\omega^{TM}}{2\pi t}\exp\left(\frac1t(\frac{cd\XM^\flat}{4\pi i}-\frac{c^2}2\|\XM\|^2)\right)
\\&=&c^{-1}\frac{\omega^{TM}/c}{2\pi t/c^2}\exp\left(\frac1{t/c^2}(\frac{d\XM^\flat/c}{4\pi i}-\frac{1}2\|\XM\|^2)\right)
\\&=&c^{-N_\infty/2-1}d_{t/c^2}.
\end{eqnarray*}
Thus $S_{cX}(M,-\omega^{TM})=c^{-N_\infty/2-1}S_X(M,-c^2\omega^{TM})$ and the result follows from Theorem \ref{BG3.10},
\[
c^{N_\infty/2+1}S_{cX}(M,-\omega^{TM})-S_X(M,-\omega^{TM})=-\widetilde{c^{-1}_{\top,X}}(N_{M_X/M},c^2g^{TM},g^{TM})\cdot\delta_{M_X}
\]
\end{proof}
\begin{Rem}\rm
If $M$ is compact, its Lie group of isometries is compact and thus the closure of the subgroup generated by a Killing field $X$ is a compact torus $T$. Thus in this case $\eta$ can be made $X$-invariant by taking the mean value $\left.\tilde\eta\right\rvert_{q}:=\frac1{\vol T}\int_T \left.(y^*\eta)\right\rvert_{q}\,d\vol_y$. As the equivariant Bott-Chern current is $X$-invariant, one can make the substitution $\int_M\eta S_X(M,-\omega^{TM})=\int_M\tilde\eta S_X(M,-\omega^{TM})$ and thus always assume that $L_X\eta=0$. The condition $d_X\eta=0$ then is a cohomological condition.
\end{Rem}

\section{\texorpdfstring{A formula for the equivariant Bott-Chern current}{Verification of the formula for the S-class}}
As in the last section $M$ shall be a compact K\"ahler manifold and $X$ a holomorphic Killing field.
For all of the results in this section, $M$ can as well be any compact subset of a (possibly non-compact) K\"ahler manifold $\tilde M$ and $X$ can be a holomorphic Killing field on $\tilde M$ without any zeros on $\partial M$, when $TM,d\vol_M,\frak A(M)$ are replaced by $T\tilde M,d\vol_{\tilde M},\frak A(\tilde M)$.

We assume that the zero set $M_X=:\{p_\ell\in M\mid \ell\in \J\}$ of $X$ has dimension 0. Set
$$\nu:=\frac{dX^\flat}{4\pi i}$$
(while for large parts of the results it could be any differential form of degree 2).

Choose $R$ small enough such that the connected component $B_\ell$ of $p_\ell$ in $\{q\in M\mid \|X_q\|\leq R\}$ can be covered by a chart and such that $B_\ell$ does not contain another zero. For a fixed $\ell\in\J$ we shall denote the corresponding coordinates by $x$, chosen such that $x=0$ at $p_\ell$. Let $\|\cdot\|_{\rm eucl}$, $d\lambda$ denote the euclidean metric and the Lebesgue measure in this chart.

Part (2) of the following Proposition gives a first estimate for the right hand side in equation (\ref{limaSX}).
\begin{prop}\label{estimate1}\ 
\begin{enumerate}
\item 
There exists $c_0\in\BR^+$ such that for all $s\in\BN_0$, $s<\pp$, $a>0$, $m\in\BR$ 
\begin{eqnarray*}
\int_{B_\ell}a^{-m}\|X\|^{-2s}e^{-\frac1{2a}\|X\|^2}d\vol_M&<&c_0a^{\pp-m-s}.
\end{eqnarray*}
\item
For $a\to0^+$ and any $\tilde\eta\in\frak A(M)$ with $\deg\tilde\eta\geq2$,
\begin{multline*}
\int_{B_\ell}\tilde\eta\frac{1-e^{\frac{\nu-\frac12\|X\|^2}a}}{\nu-\frac12\|X\|^2}
-\int_{B_\ell}\tilde\eta\frac{\nu^{\pp-1}\left(e^{-\frac1{2a}\|X\|^2}-1\right)}{(\frac12\|X\|^2)^{\pp}}
\\=
-\sum_{s=1}^{\pp-1} \int_{B_\ell}\tilde\eta\frac{\nu^{s-1}}{(\frac12\|X\|^2)^{s}}+
 \sum_{m=1}^{\pp-1}\int_{B_\ell}\tilde\eta e^{-\frac1{2a}\|X\|^2}
\cdot\nu^{\pp-1}\frac{(\frac12\|X\|^2)^{-\pp+m}}{m!a^{m}}
+O(a).
\end{multline*}
\end{enumerate}
\end{prop}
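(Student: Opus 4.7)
My plan is to establish (1) by a Gaussian estimate in local coordinates near $p_\ell$, and then to reduce (2) to (1) via a purely algebraic identity made available by the nilpotency of the $2$-form $\nu$.

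For (1), the hypothesis $\dim M_X=0$ together with $X$ being a holomorphic Killing field forces the isolated zero $p_\ell$ to be non-degenerate, so the linearization $m^{TM}(X)(p_\ell)\in\End\,T_{p_\ell}M$ is invertible. In the chosen chart this yields two-sided bounds $c_1\|x\|_{\rm eucl}^2\leq\|X\|^2\leq c_2\|x\|_{\rm eucl}^2$ on $B_\ell$ and comparability $d\vol_M\asymp d\lambda$. After the substitution $x=\sqrt{a}\,y$ the integral becomes $a^{\pp-m-s}$ times an integral uniformly dominated by $\int_{\BR^{2\pp}}\|y\|_{\rm eucl}^{-2s}e^{-c_1\|y\|^2/2}d\lambda(y)$, whose integrand scales like $r^{2\pp-1-2s}$ near $r=0$; convergence holds precisely under the hypothesis $s<\pp$.

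For (2), set $v:=\frac12\|X\|^2$. The key observation is that $\tilde\eta\wedge\nu^r=0$ whenever $r\geq\pp$, since $\deg\tilde\eta\geq2$ while $\dim_\BR B_\ell=2\pp$. Consequently, after wedging with $\tilde\eta$, the formal geometric expansion $\frac{1}{\nu-v}=-\sum_{k\geq0}\nu^k v^{-k-1}$ and the exponential series $e^{\nu/a}=\sum_{j\geq0}\nu^j(j!a^j)^{-1}$ both truncate at index $\pp-1$ as pointwise identities on $B_\ell\setminus\{p_\ell\}$. Inserting these truncated series into $\frac{1-e^{(\nu-v)/a}}{\nu-v}=\frac{1}{\nu-v}\bigl(1-e^{-v/a}e^{\nu/a}\bigr)$, regrouping by the power of $\nu$, and isolating the coefficient of $\nu^{\pp-1}$---which is precisely what the subtracted term $\nu^{\pp-1}(e^{-v/a}-1)/v^{\pp}$ of the claim cancels---produces the purely algebraic identity
$$\frac{1-e^{(\nu-v)/a}}{\nu-v}-\frac{\nu^{\pp-1}(e^{-v/a}-1)}{v^{\pp}}=-\sum_{s=1}^{\pp-1}\frac{\nu^{s-1}}{v^s}+\sum_{m=1}^{\pp-1}\frac{\nu^{\pp-1}v^{m-\pp}e^{-v/a}}{m!a^m}+R(a)$$
with tail $R(a):=\sum_{s=1}^{\pp-1}\sum_{j=0}^{s-1}\frac{\nu^{s-1}v^{j-s}e^{-v/a}}{j!a^j}$, valid after wedging with $\tilde\eta$ on $B_\ell\setminus\{p_\ell\}$. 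Each term on the right has singularity of order $\|x\|^{-2s}$ with $s\leq\pp-1$ at $p_\ell$, so the same integrability argument as in (1) shows that the associated integrals are all absolutely convergent.

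It only remains to prove $\int_{B_\ell}\tilde\eta\wedge R(a)=O(a)$. Each summand of $\tilde\eta\wedge R(a)$ has top-degree coefficient pointwise bounded by a constant times $\|X\|^{-2(s-j)}e^{-\|X\|^2/(2a)}a^{-j}$, with constants depending only on uniform norms of $\tilde\eta$ and of the smooth form $\nu$ on $B_\ell$. Applying (1) with $m\mapsto j$ and $s\mapsto s-j$, which is legal because $0\leq s-j\leq\pp-1<\pp$, dominates each such integral by $O(a^{\pp-s})$; since $s\leq\pp-1$ forces $\pp-s\geq1$, each term---and thus all of $R(a)$---is $O(a)$. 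I expect the main obstacle to be the careful bookkeeping in the double sum so that, after rearrangement by powers of $\nu$, the contributions fall exactly into (a) the $a$-independent terms matching the first sum of the claim, (b) the coefficient of $\nu^{\pp-1}$ matching the second sum after the subtraction, and (c) a remainder $R(a)$ whose every summand is of the form controlled by (1).
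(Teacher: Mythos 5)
Your argument is correct and follows the same route as the paper: part (1) via the linear lower bound $\|X\|\gtrsim\|x\|_{\rm eucl}$ and a Gaussian integral in local coordinates (the paper avoids the rescaling substitution and compares directly to $\int_{\BR^{2\pp}}$, but this is cosmetic), and part (2) via the geometric/exponential expansion of $\frac{1-e^{(\nu-v)/a}}{\nu-v}$, truncated by nilpotency of $\nu$ after wedging with $\tilde\eta$, with the $(s,m)=(\pp,0)$ term matching the subtracted piece and all terms with $s+m<\pp$ absorbed into $O(a)$ by part (1). Your explicit re-indexing of the remainder as $R(a)=\sum_{s=1}^{\pp-1}\sum_{j=0}^{s-1}\frac{\nu^{s-1}v^{j-s}e^{-v/a}}{j!a^j}$ is a cleaner bookkeeping of what the paper handles by simply discarding the sub-top-power terms of the double sum, but the underlying decomposition and the use of (1) are identical.
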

In part (2) the summands on the right hand side converge for $a\to0^+$. In general the summands on the left hand side do not converge.
\begin{proof}
1) Choose $c,C>0$ such that $c\|x\|_{\rm eucl}<\left.\|X\|\right\rvert_{x}$ and $d\vol_{M|x}=f(x)\cdot d\lambda$ with $|f|<C$ and, as defined above, $d\lambda$ being the Lebesgue measure. Then for $0\leq s<\pp$,
\begin{multline}\label{simplePower}
\int_{B_\ell}\|X\|^{-2s}d\vol_M<\int_{B_\ell}(c\|x\|_{\rm eucl})^{-2s}C\,d\lambda
<\int_{c\|x\|_{\rm eucl}<R}(c\|x\|_{\rm eucl})^{-2s}C\,d\lambda
\\=\frac{C}{c^{2s}}\vol(S^{2\pp-1})\cdot\int_0^Rr^{2\pp-1-2s}dr=\frac{C}{c^{2s}}\vol(S^{2\pp-1})\cdot \frac{R^{2\pp-2s}}{2\pp-2s}.
\end{multline}
Similarly,
\begin{multline*}
\int_{B_\ell}\|X\|^{-2s}e^{-\frac1{2a}\|X\|^2}d\vol_M<\int_{B_\ell}(c\|x\|_{\rm eucl})^{-2s}e^{-\frac{c^2}{2a}\|x\|_{\rm eucl}^2}C\,d\lambda
\\<\int_{\BR^{2\pp}}(c\|x\|_{\rm eucl})^{-2s}e^{-\frac{c^2}{2a}\|x\|_{\rm eucl}^2}C\,d\lambda
=\frac{Ca^{\pp-s}}{c^{2\pp}}
\int_{\BR^{2\pp}}\frac{1}{\|x\|_{\rm eucl}^{2s}}e^{-\frac{\|x\|_{\rm eucl}^2}{2}}\,d\lambda.
\end{multline*}
The integral on the right hand side exists by inequality (\ref{simplePower}) (or, of course, classically). Thus there is a constant $c_0$ depending only on $c,C,n$ and $s$ such that
\begin{eqnarray*}
\int_{B_\ell}\|X\|^{-2s}e^{-\frac1{2a}\|X\|^2}d\vol_M&<&c_0a^{\pp-s}.
\end{eqnarray*}
As $s$ varies over a finite range, $c_0$ can be chosen independently of $s$.

2) For $\frac12\|X\|^2\neq0$ Taylor expansion shows
\begin{eqnarray*}
\frac1{\nu-\frac12\|X\|^2}=\frac1{-\frac12\|X\|^2}\sum_{s=1}^\infty(\frac{\nu}{\frac12\|X\|^2})^{s-1}=-\sum_{s=1}^\infty\frac{\nu^{s-1}}{(\frac12\|X\|^2)^{s}}
\end{eqnarray*}
where the sum is finite, as $\nu$ has vanishing degree 0 part. 
Additionally expanding $\exp$ shows
\begin{multline*}
\int_{B_\ell}\tilde\eta\frac{1-e^{\frac{\nu-\frac12\|X\|^2}a}}{\nu-\frac12\|X\|^2}
=\int_{B_\ell}\tilde\eta
\sum_{s=1}^\infty\frac{-\nu^{s-1}}{(\frac12\|X\|^2)^{s}}\left(1-e^{-\frac{\|X\|^2}{2a}}
\sum_{m=0}^\infty\frac{(\frac{\nu}a)^m}{m!}\right)
\\=\int_{B_\ell}\tilde\eta\left(
\sum_{s=1}^\infty\frac{-\nu^{s-1}}{(\frac12\|X\|^2)^{s}}
+e^{\frac{-\|X\|^2}{2a}}\cdot\sum_{s=1}^\infty\sum_{m=0}^\infty\frac{\nu^{m+s-1}(\frac12\|X\|^2)^{-s}}{m!a^{m}}
\right)
\end{multline*}
where summands with $s+m>\pp$ are vanishing. For $s<\pp$, the integral over the first summand exists by inequality (\ref{simplePower}). For $s<\pp$ and $a\to0^+$, by part (1) the integral over the second summand converges if $s+m\leq\pp$ and it equals $O(a)$ for $s+m<\pp$.
\end{proof}
Choose an oriented orthonormal base of $T_{p_\ell}M$ with
$$\left.(\nabla X)\right\rvert_{p_\ell}={\tiny
\left(
\begin{array}{ccc}
0  & -\theta_1  &   \\
\theta_1  & 0  &   \\
  &   &  \ddots 
\end{array}
\right)}=:A.$$
In the corresponding geodesic coordinates,
\begin{eqnarray*}
X_{x}&=&Ax=(-\theta_1x_2,\theta_1x_1,\dots)^t,
\\
X^\flat&=&\theta_1(-x_2\,dx_1+x_1\,dx_2)+\dots+o(\|{x}\|)
\\
\mbox{and }\left.dX^\flat\right\rvert_{p_\ell}&=&2\sum_{j=1}^{\pp}\theta_jdx_{2j-1}\wedge dx_{2j}.
\end{eqnarray*}
\begin{prop}\label{tildeEtaAtpl} Define $a_\ell$ via $\left.(\tilde\eta\wedge\nu^{\pp-1})\right\rvert_{p_\ell}=:a_\ell\,d\vol_M$. If $\tilde\eta=\eta\wedge\frac{\omega^{TM}}{2\pi}$, then
$$
a_\ell
=\frac{\eta^{[0]}_{p_\ell}}{2^{\pp-1}\vol (S^{2\pp-1})}(c_{\top,X}^{-1})'(TM)\prod_j\theta_j^2.
$$
\end{prop}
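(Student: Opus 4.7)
The proof is a direct computation at the isolated fixed point $p_\ell$ in the chosen geodesic coordinates, together with the evaluation of the characteristic class $(c^{-1}_{\top,X})'(TM)$ at $p_\ell$. The plan is to compare both sides by carrying out two independent calculations and matching the resulting scalars.

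First I would expand everything pointwise at $p_\ell$. Setting $e_j:=dx_{2j-1}\wedge dx_{2j}$, the given expressions for $\omega^{TM}$ and $dX^\flat$ at $p_\ell$ read
\[
\omega^{TM}|_{p_\ell}=\sum_{j=1}^{\pp}e_j,\qquad \nu|_{p_\ell}=\frac{dX^\flat|_{p_\ell}}{4\pi i}=\frac1{2\pi i}\sum_{j=1}^{\pp}\theta_j e_j.
\]
Since the $e_j$ are 2-forms that commute and satisfy $e_j\wedge e_j=0$, a multinomial expansion gives
\[
\nu^{\pp-1}|_{p_\ell}=\frac{(\pp-1)!}{(2\pi i)^{\pp-1}}\sum_{k=1}^{\pp}\Bigl(\prod_{j\neq k}\theta_j\Bigr)e_1\wedge\cdots\widehat{e_k}\cdots\wedge e_n.
\]
Only the component of $\eta$ of form-degree zero contributes to $\tilde\eta\wedge\nu^{\pp-1}$ in top degree, so wedging with $\omega^{TM}/2\pi$ and using $e_k\wedge(e_1\wedge\cdots\widehat{e_k}\cdots\wedge e_n)=e_1\wedge\cdots\wedge e_n=d\vol_M|_{p_\ell}$ yields
\[
a_\ell=\eta^{[0]}_{p_\ell}\cdot\frac{(\pp-1)!}{(2\pi)^{\pp}i^{\pp-1}}\sum_{k=1}^{\pp}\prod_{j\neq k}\theta_j.
\]

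Next I would evaluate $(c^{-1}_{\top,X})'(TM)|_{p_\ell}$. From the given matrix form of $\nabla X|_{p_\ell}$, one checks that on $T^{1,0}_{p_\ell}M$ the endomorphism $m^{TM}(X)$ diagonalises with eigenvalues $i\theta_1,\ldots,i\theta_{\pp}$, so at the point $p_\ell$ (where curvature contributions drop out because $M_X$ is 0-dimensional) the defining expression simplifies to
\[
(c^{-1}_{\top,X})'(TM)|_{p_\ell}=\left.\frac{\partial}{\partial b}\right\rvert_{b=0}\prod_{j=1}^{\pp}(i\theta_j+b)^{-1}=-\frac{\sum_{k}\prod_{j\neq k}(i\theta_j)}{\prod_j(i\theta_j)^2}.
\]
Multiplying by $\prod_j\theta_j^2$ and simplifying the powers of $i$ gives
\[
(c^{-1}_{\top,X})'(TM)|_{p_\ell}\cdot\prod_j\theta_j^2=-\frac{1}{i^{\pp+1}}\sum_{k=1}^{\pp}\prod_{j\neq k}\theta_j.
\]

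Finally I would combine the two computations using $\vol(S^{2\pp-1})=\frac{2\pi^{\pp}}{(\pp-1)!}$, which turns the prefactor in the claimed formula into $\frac{(\pp-1)!}{2^{\pp}\pi^{\pp}}$, and then verify the identity $-i^{-\pp-1}=i^{-\pp+1}$ (equivalently $-1=i^{-2}$) to conclude that the right-hand side of the proposition coincides with the expression for $a_\ell$ obtained in the first step. The only mildly delicate point is keeping the normalisation conventions ($2\pi i$ versus $4\pi i$, the extra $\omega^{TM}/2\pi$ incorporated into $\tilde\eta$, and the identification of the eigenvalues on $T^{1,0}$ rather than on $TM$) consistent; once these are fixed, the proof reduces to the two explicit evaluations above.
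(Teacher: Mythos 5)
Your argument is correct and is essentially the same computation as the paper's: expand $\omega^{TM}$ and $\nu=\frac{dX^\flat}{4\pi i}$ at $p_\ell$ in the chosen orthonormal geodesic frame, take the $(\pp-1)$-st power by a multinomial expansion, wedge with $\omega^{TM}/2\pi$, and compare the resulting constant to $(c^{-1}_{\top,X})'(TM)\prod_j\theta_j^2$ using $\vol(S^{2\pp-1})=2\pi^{\pp}/(\pp-1)!$. You write out the evaluation of $(c^{-1}_{\top,X})'(TM)$ via $\left.\frac{\partial}{\partial b}\right\rvert_{b=0}\prod_j(i\theta_j+b)^{-1}$ a bit more explicitly than the paper does, and your check $-i^{-\pp-1}=i^{-\pp+1}$ is exactly the sign that reconciles the two expressions; the paper leaves this to the reader and to the formula that later reappears in the proof of Lemma \ref{BGVerify}.
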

\begin{proof}
With $\left.\omega^{TM}\right\rvert_{p_\ell}=\sum_{j=1}^{\pp}dx_{2j-1}\wedge dx_{2j}$ we get
\begin{align*}
\left.\frac{\omega^{TM}}{2\pi}\wedge\left(\frac{dX^\flat}{4\pi i}\right)^{\pp-1}\right\rvert_{p_\ell}
&=\frac{2^{\pp-1}\left(\pp-1\right)!}{2\pi(4\pi i)^{\pp-1}}\sum_{j=1}^{\pp}\frac1{\theta_j}\prod_{j=1}^{\pp}\theta_j\cdot d\lambda
\\&=\frac1{(2 i)^{\pp-1}\vol (S^{2\pp-1})}\sum_{j=1}^{\pp}\frac1{\theta_j}\prod_{j=1}^{\pp}\theta_j\cdot d\lambda
\\&=\frac1{2^{\pp-1}\vol (S^{2\pp-1})}(c_{\top,X}^{-1})'(TM)\prod_j\theta_j^2\cdot d\lambda.
\end{align*}
\end{proof}
The next Proposition further simplifies the terms in Proposition \ref{estimate1}(2): (1) computes a limit for the second summand on the right hand side, and (2) simplifies the second summand on the left hand side.
\begin{prop}\label{limitsFora}
As $a\to0^+$ the following estimates hold:
\begin{enumerate}
\item For $\pp=s+m$ and $0\leq s<\pp$
\begin{eqnarray*}
\int_{B_\ell}\frac{\tilde\eta\wedge\nu^{\pp-1}}{a^mm!(\frac12\|X\|^2)^{s}}e^{-\frac{1}{2a}\|X\|^{2}}
=
a_\ell\frac{\vol(S^{2\pp-1})}{\prod^{\pp}\theta_j^2}\frac{2^{\pp-1}}{m}+O(a).
\end{eqnarray*}
\item For $\pp=s$ and $B_\ell'(a):=\{x\in B_\ell\mid \|X_x\|^2<2a\}$, 
\begin{eqnarray*}
\lefteqn{\int_{B_\ell}
\frac{\tilde\eta\wedge\nu^{\pp-1}}{(\frac12\|X\|^2)^{\pp}}\left(
e^{-\frac1{2a}\|X\|^2}-1
\right)
+\int_{B_\ell\setminus B_\ell'(a)}\tilde\eta
\frac{\nu^{\pp-1}}{(\frac12\|X\|^2)^{\pp}}
}
\\&=&\int_{B_\ell'(a)}\tilde\eta
\frac{\nu^{\pp-1}}{(\frac12\|X\|^2)^{\pp}}\left(
e^{-\frac1{2a}\|X\|^2}-1
\right)
+\int_{B_\ell\setminus B_\ell'(a)}\tilde\eta
\frac{\nu^{\pp-1}}{(\frac12\|X\|^2)^{\pp}}
e^{\frac{-\frac12\|X\|^2}{a}}
\\&=&
a_\ell\frac{\vol(S^{2\pp-1})}{\prod^{\pp}\theta_j^2}2^{\pp-1}\Gamma'(1)+O(a).
\end{eqnarray*}
\end{enumerate}
\end{prop}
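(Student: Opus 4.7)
The plan is to reduce both parts to explicit Gaussian-type integrals by localizing near each isolated fixed point $p_\ell$, linearizing both $X$ and the density $\tilde\eta\wedge\nu^{\pp-1}$, and recognizing the resulting one-dimensional radial integrals as standard values of the Gamma function.

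I work in the geodesic coordinates at $p_\ell$ introduced before Proposition \ref{tildeEtaAtpl}, writing $\tilde\eta\wedge\nu^{\pp-1}=f(x)\,d\vol_M$ with $f(0)=a_\ell$ by Proposition \ref{tildeEtaAtpl}. The key preparatory step is to show that replacing $f(x)$ by the constant $a_\ell$, and $\|X\|^2$ by its quadratic approximation $Q(x):=\sum_j\theta_j^2(x_{2j-1}^2+x_{2j}^2)$ arising from $X=Ax+o(\|x\|)$, each introduces only $O(a)$ errors: each extra factor of $\|x\|^2$ is dominated by $\|X\|^2/c^2$, which lowers the exponent $s$ in Proposition \ref{estimate1}(1) by one and thus gains one power of $a$. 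Linear Taylor terms of $f$ integrate to zero against the leading measure by the rotational symmetry of $e^{-Q(x)/(2a)}Q(x)^{-s}\,d\lambda$ in each $(x_{2j-1},x_{2j})$-plane, which is essential so that the error is genuinely $O(a)$ rather than merely $O(a^{1/2})$. After the change of variables $y_{2j-1}:=\theta_j x_{2j-1}$, $y_{2j}:=\theta_j x_{2j}$ (Jacobian $\prod_j\theta_j^{-2}$) and polar coordinates $y=r\omega$ followed by $u=r^2/(2a)$, the $a$-dependence is extracted cleanly and the boundary of $B_\ell$ contributes only exponentially small errors.

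For part (1) with $\pp=s+m$, the radial integral becomes
\[
\frac{2^{\pp-1}}{m!}\int_0^{R^2/(2a)}u^{m-1}e^{-u}\,du\;\longrightarrow\;\frac{2^{\pp-1}\,\Gamma(m)}{m!}=\frac{2^{\pp-1}}{m}
\]
as $a\to 0^+$; multiplying by $a_\ell\,\vol(S^{2\pp-1})/\prod_j\theta_j^2$ yields the claimed limit. For part (2) with $\pp=s$, the identity $r^{2\pp-1}/(r^2/2)^\pp = 2^\pp/r$ makes each of the three pieces individually divergent at $r=0$; the cancellation is built into the way they are combined. After the $u$-substitution (which sends $B_\ell'(a)$ to $\{u<1\}$ to leading order) the combination reduces to
\[
2^{\pp-1}\!\left[\int_0^1\frac{e^{-u}-1}{u}\,du+\int_1^\infty\frac{e^{-u}}{u}\,du\right]=2^{\pp-1}\Gamma'(1),
\]
using the classical integral representation of $\Gamma'(1)=-\gamma$.

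The principal obstacle is the $O(a)$-control described above: one must show that the linearization of $X$ together with the replacement of $f(x)$ by $a_\ell$ leaves no intermediate $O(a^{1/2})$ or $O(\log a)$ remainder. The planewise rotational symmetry of the leading measure is precisely what kills the linear Taylor contributions of both $f$ and the higher-order part of $\|X\|^2-Q(x)$; bookkeeping this symmetry carefully, together with the direct application of Proposition \ref{estimate1}(1) to quadratic remainders, is the substance of the proof.
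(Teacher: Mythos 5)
Your proposal follows essentially the same route as the paper: localize near each zero, linearize, pass via $y=Ax$ and $u=\|y\|^2/(2a)$ to radial integrals, and recognize the limits as $\Gamma(m)/m!=1/m$ for part (1) and the Frullani representation $\int_0^1\frac{e^{-u}-1}{u}\,du+\int_1^\infty\frac{e^{-u}}{u}\,du=\Gamma'(1)$ for part (2). One simplification the paper exploits that you could borrow: a Killing field is \emph{exactly} linear in geodesic coordinates about its zero, so $X_x=Ax$ on the nose and $\|X_x\|^2-\|Ax\|_0^2=O(\|x\|^4)$, which gives the $O(a)$ control for that replacement directly without any parity cancellation; your planewise-symmetry argument is then needed only to justify replacing the density $\tilde\eta\wedge\nu^{\pp-1}$ by its fixed-point value $a_\ell\,d\lambda$ (a step the paper leaves implicit), and it does that job correctly.
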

We shall use the notation $B^{2\pp}_R(x)\subset \BR^{2\pp}$ for the euclidean ball of radius $R$ and center $x$.
\begin{proof}
Replacing the radius $R$ by a smaller number does not affect these statements, as this causes the left hand sides to change by $O(a^{-m}e^{-\frac{R^2}{2a}})$.
Expanding the metric on $M$ in geodesic coordinates at $p_\ell$ as $\left.\<\cdot,\cdot\>\right\rvert_{x}=\left.\<\cdot,\cdot\>\right\rvert_{0}+\beta$ with $\beta=O(\|x\|^2)$ and $X_x=Ax$, one gets $\|X_x\|^2=\|Ax\|_0^2+\beta(Ax,Ax)=\|Ax\|_0^2+O(\|x\|^4)$. As $A$ is invertible, we can choose $R>0$ sufficiently small such that $c>0$ exists with $\left|\|X_x\|^2-\|Ax\|^2_0\right\rvert<c\|Ax\|^4_0$ on $\{x\in B_\ell\mid \|X_x\|<R\}$. Then replace $R$ by a smaller value such that $c\|Ax\|_0^4<\frac12\|Ax\|_0^2$ 
on $\{x\in B_\ell\mid \|X_x\|<R\}$. By the mean value Theorem $|y^{-s}-y_0^{-s}|\leq|y-y_0|\sup_{|t-y_0|<|y-y_0|}st^{-s-1}$ applied to $y\mapsto y^{-s}$, one obtains for $s\in\BR_0^+$, $x\neq0$
\begin{align}\label{estimate2}
\left|
\|X_x\|^{-2s}-\|Ax\|_0^{-2s}\right\rvert&\leq&c\|Ax\|_0^4\sup_{\frac12\|Ax\|_0^2<t<\frac32\|Ax\|_0^2}st^{-s-1}
= c2^{s+1}s(\|Ax\|_0^2)^{-s+1}.
\end{align}
The same way, for $s\geq0$
\begin{eqnarray}\label{estimate3}
\nonumber
\lefteqn{\left|
\|X_x\|^{-2s}e^{-\frac1{2a}\|X\|^2}-\|Ax\|_0^{-2s}e^{-\frac1{2a}\|Ax\|_0^2}\right\rvert}
\\ \nonumber&\leq&
c\|Ax\|_0^4\sup_{\frac12\|Ax\|_0^2<t<\frac32\|Ax\|_0^2}e^{-\frac{t}{2a}}(st^{-s-1}+\frac1{2a}t^{-s})
\\&=&ce^{-\frac1{4a}\|Ax\|_0^2}\left(2^{s+1}s(\|Ax\|_0^2)^{-s+1}+\frac{2^{s-1}}{a}(\|Ax\|_0^2)^{-s+2}\right).
\end{eqnarray}
Applying Proposition \ref{estimate1}(1) to the right hand side of equation (\ref{estimate3}) shows that one can replace $\|X\|$ by $\|Ax\|_0$ in the integrals $\int_{B_\ell}\frac{\tilde\eta\wedge\nu^{\pp-1}}{a^m(\frac12\|X\|^2)^{s}}e^{-\frac{1}{2a}\|X\|^{2}}$ for $m+s\leq\pp$, $0\leq s\leq\pp$ up to a term $O(a)$ as $a\to0^+$. 
Similarly for $B_\ell''(a):=\{x\in\BR^{2\pp}\mid \|Ax\|^2_0<2a\}$
\begin{eqnarray*}
\int_{\{x\mid \|Ax\|_0^2<2a\}}\frac{1}{(\|Ax\|_0^2)^{\pp-1}}\,d\lambda
&\stackrel{y=Ax}=&\nonumber
\frac1{|\det A|}\int_{B^{2\pp}_{\sqrt{2a}}(0)}\frac{1}{\|y\|_0^{2\pp-2}}\,d\lambda
\\&=&
\frac{\vol(S^{2\pp-1})}{|\det A|}\int_0^{\sqrt{2a}}r\,dr
=O(a).
\end{eqnarray*}
combined with (\ref{estimate2}) provides this replacement in
$\int_{B_\ell''(a)}\tilde\eta
\frac{\nu^{\pp-1}}{(\frac12\|X\|^2)^{\pp}}\left(
e^{-\frac1{2a}\|X\|^2}-1
\right)$ for the remaining factor-$(-1)$-term.

Furthermore the integration range $B_\ell'(a)$ can be replaced by $B_\ell''(a)$ for $m+s=\pp$, $0\leq s\leq\pp$: One has
$\{x\in B_\ell\mid \|Ax\|^2_0+c\|Ax\|^4_0<2a\}\subset B_\ell'(a)\subset\{x\in B_\ell\mid \|Ax\|^2_0-c\|Ax\|^4_0<2a\}$, where $\|Ax\|_0^2<\frac1{2c}$ on $B_\ell$. Thus one finds
$(B_\ell'(a)\setminus B_\ell''(a))\cup(B_\ell''(a)\setminus B_\ell'(a))\subset
\{x\in B_\ell\mid |\|Ax\|^2_0-2a|<c\|Ax\|^4_0\mbox{ and } \|Ax\|_0^2<\frac1{2c}\}$. The solutions $A_\pm=\|Ax\|_0$ of $\|Ax\|^2_0\pm c\|Ax\|^4_0=2a$ verify $|A_\pm-\sqrt{2a}|<c'a^{3/2}$ for $a$ sufficiently small.
Then integrals over the difference between $B_\ell'(a)$, $B_\ell''(a)$ are bounded by
\begin{eqnarray*}
&&\int_{\{x\mid |\|Ax\|^2_0-2a|<c\|Ax\|^4_0\}} a^{-m}(\|Ax\|^2_0)^{-s}e^{-\frac1{2a}\|Ax\|^2_0}\,d\lambda\\
&\leq&
\int_{\{x\mid |\|Ax\|^2_0-2a|<c\|Ax\|^4_0\}}a^{-m}(\|Ax\|^2_0)^{-s}\,d\lambda
\\&<&\frac{\vol (S^{2\pp-1})}{|\det A|}\left\{{a^{-m}\frac{r^{2\pp-2s}}{2\pp-2s}\Big|_{\sqrt{2a}-c'a^{3/2}}^{\sqrt{2a}+c'a^{3/2}}\atop\log r\Big|_{\sqrt{2a}-c'a^{3/2}}^{\sqrt{2a}+c'a^{3/2}}}\right.\mbox{ if }{s<\pp\atop s=\pp}
\\&=&O(a).
\end{eqnarray*}
Similarly replacing the integration range $B_\ell$ by $\{x\in\BR^{2\pp}\mid \|Ax\|^2_0<R^2\}$ causes a difference equal to $O(a^{-m}e^{-\frac{R^2}{3a}})$.

1) For $\pp=s+m$, $m\geq1$ and $a\to0^+$
\begin{eqnarray*}
\lefteqn{
\int_{\{x\mid \|Ax\|_0^2<R^2\}}\frac{1}{a^m\|Ax\|_0^{2s}}e^{-\frac{1}{2a}\|Ax\|_0^{2}}\,d\lambda
\stackrel{y=Ax}=
\frac1{|\det A|}\int_{B^{2\pp}_R(0)}\frac{1}{a^m\|y\|_0^{2s}}e^{-\frac{\|y\|_0^2}{2a}}\,d\lambda
}\\&=&
\frac{\vol(S^{2\pp-1})}{a^m|\det A|}\int_0^Rr^{2\pp-1-2s}e^{-\frac{r^2}{2a}}dr
\stackrel{u=\frac{r^2}{2a}}=
\frac{\vol(S^{2\pp-1})}{a^m|\det A|}\int_0^{\frac{R^2}{2a}}a\sqrt{2au}^{2\pp-2-2s}e^{-u}\,du
\\&=&
\frac{\vol(S^{2\pp-1})}{|\det A|}2^{m-1}(m-1)!+O(e^{-\frac{R^2}{2a}}).
\end{eqnarray*}

2) Let Ei denote the exponential integral function given by ${\rm Ei}(x)=-\int_{-x}^{+\infty}\frac{e^{-t}}{t}\,dt$ for $x\in\BR^-$. For $s=\pp$, $m=0$ one finds
\begin{eqnarray*}
\lefteqn{
\int_{\{x\mid \|Ax\|_0^2<2a\}}\frac{e^{-\frac{1}{2a}\|Ax\|_0^2}-1}{\|Ax\|_0^{2\pp}}\,d\lambda}
\\&\stackrel{y=Ax}=&
\frac1{|\det A|}\int_{B^{2\pp}_{\sqrt{2a}}(0)}\frac{e^{-\frac{\|y\|_0^2}{2a}}-1}{\|y\|_0^{2\pp}}\,d\lambda=
\frac{\vol(S^{2\pp-1})}{|\det A|}\int_0^{\sqrt{2a}}\frac{e^{-\frac{r^2}{2a}}-1}{r}\,dr
\\&\stackrel{u=\frac{r^2}{2a}}=&\frac{\vol(S^{2\pp-1})}{|\det A|}\int_0^1\frac{e^{-u}-1}{2u}\,du
=\frac{\vol(S^{2\pp-1})}{2|\det A|}(\Gamma'(1)+{\rm Ei}(-1)),
\end{eqnarray*}
\begin{eqnarray*}
\lefteqn{
\int_{\{x\mid 2a<\|Ax\|_0^2<R^2\}}\frac{e^{-\frac{1}{2a}\|Ax\|_0^2}}{\|Ax\|_0^{2\pp}}\,d\lambda}
\\&\stackrel{y=Ax}=&
\frac1{|\det A|}\int_{B^{2\pp}_R(0)\setminus B^{2\pp}_{\sqrt{2a}}(0)}\frac{e^{-\frac{\|y\|_0^2}{2a}}}{\|y\|_0^{2\pp}}\,d\lambda=
\frac{\vol(S^{2\pp-1})}{|\det A|}\int_{\sqrt{2a}}^R\frac{e^{-\frac{r^2}{2a}}}{r}\,dr
\\&\stackrel{u=\frac{r^2}{2a}}=&\frac{\vol(S^{2\pp-1})}{|\det A|}\int_1^{\frac{R^2}{2a}}\frac{e^{-u}}{2u}\,du
=\frac{\vol(S^{2\pp-1})}{2|\det A|}({\rm Ei}(-\frac{R^2}{2a})-{\rm Ei}(-1)).
\end{eqnarray*}
Using $|\det A|=\prod^{\pp}_{j=1}\theta_j^2$ the Proposition follows.
\end{proof}
Now one can verify as a refinement of equations (\ref{3.9BG}), (\ref{3.6BG})
\begin{lemma}\label{BGVerify}
For $t\to0^+$ and $\eta\in\frak A(M)$,
$\int_M\eta\cdot d_t=\int_{M_X}\eta (c_{\top,X}^{-1})'(TM)+O(t).$
\end{lemma}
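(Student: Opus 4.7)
The plan is to localize the integral to small neighborhoods of the isolated fixed points, after which the statement follows by assembling the asymptotics just established. First observe that on $M\setminus\bigcup_\ell B_\ell$ one has $\|X\|\geq R$, so on this region
$$\left|d_t\right|\leq C\cdot t^{-\pp-1}e^{-R^2/(2t)}=O(t^\infty),$$
and the contribution to $\int_M\eta\cdot d_t$ from outside $\bigcup_\ell B_\ell$ is negligible. It therefore suffices to analyze $\int_{B_\ell}\eta\cdot d_t$ for each fixed point $p_\ell\in\J$.

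On $B_\ell$, I would expand the exponential as a finite sum in $\nu=dX^\flat/(4\pi i)$, which truncates because $\nu$ has degree $2$:
$$d_t=\frac{\omega^{TM}}{2\pi t}\,e^{-\|X\|^2/(2t)}\sum_{k=0}^{\pp}\frac{\nu^k}{t^k k!}.$$
Setting $\tilde\eta:=\eta\wedge\omega^{TM}/(2\pi)$ and selecting the top-degree piece of each product yields
$$\int_{B_\ell}\eta\cdot d_t=\sum_{q=1}^{\pp}\frac{1}{(\pp-q)!\,t^{\pp-q+1}}\int_{B_\ell}\tilde\eta^{[2q]}\wedge\nu^{\pp-q}\cdot e^{-\|X\|^2/(2t)},$$
where the sum starts at $q=1$ because $\tilde\eta$ has no degree-zero component (it carries the factor $\omega^{TM}$).

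For $q\geq 2$, I would invoke Proposition \ref{estimate1}(1) with $s=0$ and $m=\pp-q+1$: using that $\tilde\eta^{[2q]}\wedge\nu^{\pp-q}$ is uniformly bounded on $\overline{B_\ell}$, each integral is controlled by
$$\left|\int_{B_\ell}\frac{\tilde\eta^{[2q]}\wedge\nu^{\pp-q}}{(\pp-q)!\,t^{\pp-q+1}}\,e^{-\|X\|^2/(2t)}\right|=O(t^{q-1})=O(t).$$
Only the $q=1$ term survives in the limit. Applying Proposition \ref{limitsFora}(1) with $s=0$, $m=\pp$ and multiplying both sides by $\pp$ gives
$$\int_{B_\ell}\frac{\tilde\eta\wedge\nu^{\pp-1}}{(\pp-1)!\,t^\pp}\,e^{-\|X\|^2/(2t)}=a_\ell\frac{\vol(S^{2\pp-1})}{\prod_j\theta_j^2}\cdot 2^{\pp-1}+O(t),$$
and Proposition \ref{tildeEtaAtpl} identifies the right-hand side with $\eta^{[0]}(p_\ell)(c_{\top,X}^{-1})'(TM)|_{p_\ell}$. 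Summing over $\ell\in\J$ produces the claimed identity. The main obstacle is purely bookkeeping: keeping track of the combinatorial factors $\pp!$ versus $(\pp-1)!$ and observing how the product $\prod_j\theta_j^2$ and the sphere volume $\vol(S^{2\pp-1})$ in Propositions \ref{tildeEtaAtpl} and \ref{limitsFora}(1) cancel to produce precisely the class $(c_{\top,X}^{-1})'(TM)$ at each fixed point.
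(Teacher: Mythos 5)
Your proof is correct and follows essentially the same route as the paper's: it isolates the dominant $\nu^{\pp-1}$ term via Proposition \ref{estimate1}(1), then applies Proposition \ref{limitsFora}(1) with $s=0$, $m=\pp$ followed by Proposition \ref{tildeEtaAtpl} to identify the limit with $(c_{\top,X}^{-1})'(TM)$ at the fixed points. You simply make the degree bookkeeping and the localization to $\bigcup_\ell B_\ell$ more explicit than the paper's one-line chain of equalities, but the argument is the same.
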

\begin{proof}
One finds
\begin{align*}
\lefteqn{\int_M\eta\frac{\omega^{TM}}{2\pi t}\exp\left(\frac{d\XM^\flat}{4\pi i t}-\frac1{2t}\|\XM\|^2\right)
}\\
&\stackrel{{\rm Prop.\,}\ref{estimate1}(1)}=\int_{M}\eta\frac{\omega^{TM}}{2\pi t^{\pp}}\frac1{(\pp-1)!}\left(\frac{d\XM^\flat}{4\pi i}\right)^{\pp-1}e^{-\frac1{2t}\|\XM\|^2}+O(t)
\\&\stackrel{{\rm Prop.\,}\ref{limitsFora}(1)}=\sum_{\ell\in\J} a_\ell \frac{2^{\pp-1}\vol(S^{2\pp-1})}{\prod^{\pp}_{j=1}\theta_j^2}+O(t)
\\&\stackrel{{\rm Prop.\,}\ref{tildeEtaAtpl}\phantom{(1)}}=
\sum_{\ell\in\J} \eta_{p_\ell}(-i^{-\pp-1})\sum_j\frac1{\theta_j}\prod_j\frac1{\theta_j}+O(t)
\\&\stackrel{\phantom{{\rm Prop.\,}\ref{limitsFora}(1)}}=\int_{M_X}\eta_{p_\ell}(c_{\top,X}^{-1})'(TM)+O(t).
\end{align*}
\end{proof}
\begin{theor} Let $X$ have isolated zeros. For $a\to0^+$, $B_\ell'(a)=\{x\in B_\ell\mid \|X_x\|^2<2a\}$ and $\eta\in\frak A(M)$, 
\label{SFormel1}
\begin{eqnarray*}
\int_M\eta S_X(M,-\omega^{TM})&=&\int_M\eta\frac{\omega^{TM}}{2\pi}\left(\frac1{\frac12\|\XM\|^2-\frac{d\XM^\flat}{4\pi i}}\right)^{[<2\pp-2]}
\\&&+\int_{M\setminus\bigcup_{\ell\in\J} B_\ell'(a)}\eta\frac{\omega^{TM}}{2\pi}\left(\frac{d\XM^\flat}{4\pi i}\right)^{\pp-1}\left(\frac12\|X\|^2\right)^{-\pp}
\\&&+(\log a-2\Gamma'(1)-{\cal H}_{\pp-1})\int_{M_X}\eta (c_{\top,X}^{-1})'(TM)+O(a)
\end{eqnarray*}
where $\alpha^{[<2\pp-2]}$ denotes the part of degree less than $2\pp-2$ and ${\cal H}_{\pp-1}$ is the harmonic number as in eq. (\ref{harmonic}).
\end{theor}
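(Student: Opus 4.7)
The strategy is to refine (\ref{limaSX}) to an expression valid for every $\eta\in\frak A(M)$, not only for $d_X$-closed ones, and then to compute the resulting $a\to 0^+$ asymptotics explicitly using Propositions \ref{estimate1}--\ref{tildeEtaAtpl}. The refinement rests on Lemma \ref{BGVerify}, which supplies the asymptotic $\int_M\eta\,d_t=A+O(t)$ for all smooth $\eta$, where $A:=\int_{M_X}\eta(c^{-1}_{\top,X})'(TM)$. Inserting this into the Mellin transforms $F^1_\eta,F^2_\eta$ in the definition of $S_X$ gives $F^1_\eta(s)=A/\Gamma(s+1)+G(s)$ with $G$ holomorphic at $s=0$, and $F^2_\eta(0)=0$. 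Differentiating at $s=0$ and combining the two yields
$$
\int_M\eta S_X(M,-\omega^{TM})=-\Gamma'(1)A+\lim_{a\to 0^+}\Bigl[\int_a^\infty\Bigl(\int_M\eta\,d_t\Bigr)\frac{dt}{t}+A\log a\Bigr].
$$

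Next I would integrate $d_t=\frac{\omega^{TM}}{2\pi t}\exp((\nu-\frac12\|X\|^2)/t)$ in $t$ explicitly: the substitution $u=1/t$ produces, away from $M_X$, the pointwise identity $\int_a^\infty d_t\,dt/t=\frac{\omega^{TM}}{2\pi}(e^{(\nu-\frac12\|X\|^2)/a}-1)/(\nu-\frac12\|X\|^2)$, where $\nu=dX^\flat/(4\pi i)$. Setting $\tilde\eta:=\eta\wedge\omega^{TM}/(2\pi)$, I would split $\int_M\tilde\eta\cdot(\cdot)$ into the outer region $M\setminus\bigcup_\ell B_\ell$, where $e^{-\|X\|^2/(2a)}$ decays exponentially and only the geometric series $-1/(\nu-\frac12\|X\|^2)=\sum_{s\geq 1}\nu^{s-1}/(\frac12\|X\|^2)^s$ survives, and the balls $B_\ell$, to which Proposition \ref{estimate1}(2) applies since $\deg\tilde\eta\geq 2$.

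Reassembly identifies three kinds of terms. First, the $s=1,\ldots,\pp-1$ series contributions from both regions combine into the global integral $\int_M\tilde\eta\bigl(\frac{1}{\frac12\|X\|^2-\nu}\bigr)^{[<2\pp-2]}$, whose local integrability near each $p_\ell$ is guaranteed by Proposition \ref{estimate1}(1). Second, the $s=\pp$ outer contribution joins the explicit $\int_{B_\ell\setminus B_\ell'(a)}$ summand produced by Proposition \ref{limitsFora}(2) to give $\int_{M\setminus\bigcup B_\ell'(a)}\tilde\eta\nu^{\pp-1}/(\frac12\|X\|^2)^\pp$; the $s=\pp+1$ term vanishes on degree grounds. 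Third, the remaining $a$-dependent pieces give, via Proposition \ref{limitsFora}(2) and Proposition \ref{tildeEtaAtpl}, the contribution $-\Gamma'(1)A$ from the $\nu^{\pp-1}(e^{-\|X\|^2/(2a)}-1)/(\frac12\|X\|^2)^\pp$ summand, and via Proposition \ref{limitsFora}(1) combined with Proposition \ref{tildeEtaAtpl} the contribution $-\sum_{m=1}^{\pp-1}A/m=-{\cal H}_{\pp-1}A$ from the exponential sum. Adding the leading $-\Gamma'(1)A+A\log a$ from step 1 produces the announced coefficient $\log a-2\Gamma'(1)-{\cal H}_{\pp-1}$ in front of $A$.

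The hardest point is the careful treatment at the critical dimension $s=\pp$: since $\nu^{\pp-1}/(\frac12\|X\|^2)^\pp$ fails to be integrable at each $p_\ell$, one cannot merge the outer-region $s=\pp$ contribution into a global integral without loss. The cutoff $B_\ell'(a)=\{\|X\|^2<2a\}$ is precisely the right regularization, because the missing interior integral is the one converted by Proposition \ref{limitsFora}(2) into the $\Gamma'(1)A$ term, while its logarithmic divergence in $a$ is exactly absorbed by the $A\log a$ from step 1. Bookkeeping the signs between $c=\nu-\frac12\|X\|^2$ and $-c$ in Proposition \ref{estimate1}(2), and ensuring that the various $O(a)$ errors survive the eventual summation over $\ell\in\J$, are the remaining routine but delicate checks.
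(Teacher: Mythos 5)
Your proposal is correct and follows essentially the same route as the paper: starting from eq.~(\ref{limaSX}) extended to arbitrary smooth $\eta$ by Lemma~\ref{BGVerify} (your step~1 simply re-derives this via the Mellin transforms $F^1_\eta,F^2_\eta$, which is exactly how (\ref{limaSX}) was obtained), then decomposing over the outer region and the balls $B_\ell$ and applying Propositions~\ref{estimate1}(2), \ref{limitsFora} and \ref{tildeEtaAtpl}. The sign bookkeeping, the role of the cutoff $B_\ell'(a)$ at the critical power $s=\pp$, and the resulting coefficient $\log a-2\Gamma'(1)-{\cal H}_{\pp-1}$ all match the paper's proof.
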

\begin{proof}
On $M\setminus\bigcup_{\ell\in\J} B_\ell$, $\left|\int_{M\setminus\bigcup_{\ell\in\J} B_\ell}\frac{\tilde\eta}{\nu-\frac12\|X\|^2}e^{\frac{\nu-\frac12\|X\|^2}a}\right\rvert=o(e^{-\frac{C'}a})$ for a constant $C'>0$ depending on a lower bound for $\left.\|X\|^2\right\rvert_{M\setminus\bigcup_\ell B_\ell}$.
On $B_\ell$ one gets according to equation (\ref{limaSX}) (generalized to this situation by Lemma \ref{BGVerify}) and the previous Propositions
\begin{eqnarray*}
\lefteqn{
\int_M\eta S_X(M,-\omega^{TM})\stackrel{(5){,\rm Lemma}\, \ref{BGVerify}}=\int_M\eta \frac{\omega^{TM}}{2\pi}\cdot\frac{\exp(d_X\frac{\XM^\flat}{4\pi i a})-1}{d_X\frac{\XM^\flat}{4\pi i}}
}\\&&+(\log a-\Gamma'(1))\int_{M_X}\eta(c_{\top,X}^{-1})'(N_{M_X/M},g^{TM})+O(a)
\\&\stackrel{{\rm Prop.\,}\ref{estimate1}(2)}=&\int_{B_\ell}\eta\frac{\omega^{TM}}{2\pi}\cdot
\frac{\nu^{\pp-1}(1-e^{\frac{-\|X\|^2}{2a}})}{(\frac12\|X\|^2)^{\pp}}
\\&&+\int_{B_\ell}\eta\frac{\omega^{TM}}{2\pi}\left(
\Bigg(\frac1{\frac12\|X\|^2-\nu}\right)^{[<2\pp-2]}
\\&&-e^{\frac{-\|X\|^2}{2a}}
\cdot\nu^{\pp-1}\sum_{m=0}^{\pp-1}\frac{(\frac12\|X\|^2)^{-\pp+m}}{m!a^{m}}
\Bigg)
\\&&+(\log a-\Gamma'(1))\int_{M_X}\eta(c_{\top,X}^{-1})'(N_{M_X/M},g^{TM})+O(a)
\\&\stackrel{{\rm Prop.\,}\ref{limitsFora}}=&\int_{M\setminus\bigcup_{\ell\in\J} B_\ell'(a)}\eta\frac{\omega^{TM}}{2\pi}\cdot\frac{\nu^{\pp-1}}{\left(\frac12\|X\|^2\right)^{\pp}}
\\&&+\int_{B_\ell}\eta\frac{\omega^{TM}}{2\pi}
\left(\frac1{\frac12\|X\|^2-\nu}\right)^{[<2\pp-2]}
\\&&-\frac{\vol(S^{2\pp-1})}{\prod^{\pp}\theta_j^2}2^{\pp-1}
a_\ell\Big(\sum_{m=1}^{\pp-1}
\frac1m+\Gamma'(1)\Big)
\\&&+(\log a-\Gamma'(1))\int_{M_X}\eta(c_{\top,X}^{-1})'(N_{M_X/M},g^{TM})+O(a).
\end{eqnarray*}
Using the value of $a_\ell$ as given in Proposition \ref{tildeEtaAtpl} finishes the proof.
\end{proof}
We check Corollary \ref{ScalingSX} at this point:
\begin{cor}\label{ScalingSXeigene}
Assume that $X$ has isolated zeros. Then
$$
c^{N_\infty/2+1}S_{cX}(M,-\omega^{TM})-S_X(M,-\omega^{TM})=\log c^2\cdot(c_{\top,X}^{-1})'(N_{M_X/M},g^{TM})\cdot\delta_{M_X}
$$
in $P^M_{X,M_X}/P^{M,0}_{X,M_X}$.
\end{cor}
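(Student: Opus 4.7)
The plan is to derive the scaling identity directly from the explicit formula for $S_X$ in Theorem \ref{SFormel1}, independently of Theorem \ref{BG3.10}. I would begin with the preliminary relation $S_{cX}(M,-\omega^{TM})=c^{-N_\infty/2-1}S_X(M,-c^2\omega^{TM})$ already observed in the proof of Corollary \ref{ScalingSX} from the behaviour of $d_t$ under $X\mapsto cX$ and $\omega\mapsto b\omega$. This reduces the claim to showing
\[
S_X(M,-c^2\omega^{TM})-S_X(M,-\omega^{TM})=\log c^2\cdot (c_{\top,X}^{-1})'(N_{M_X/M},g^{TM})\cdot\delta_{M_X}
\]
in $P^M_{X,M_X}/P^{M,0}_{X,M_X}$.

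Next I would pair both sides with an arbitrary smooth form $\eta$ and apply Theorem \ref{SFormel1} to each current on the left. Under the rescaling $g^{TM}\mapsto c^2g^{TM}$ one has $\|X\|^2\mapsto c^2\|X\|^2$, $X^\flat\mapsto c^2X^\flat$, $\nu\mapsto c^2\nu$, and the exclusion sets $B_\ell'(a)=\{x\in B_\ell\mid\|X_x\|^2<2a\}$ become $B_\ell'(a/c^2)$. Inspecting the three summands in Theorem \ref{SFormel1}: the degree-$(<2\pp-2)$ summand is scale invariant because the outer factor $c^2\omega^{TM}$ cancels against $(\tfrac12c^2\|X\|^2-c^2\nu)^{-1}=c^{-2}(\tfrac12\|X\|^2-\nu)^{-1}$ term by term in the Taylor expansion; the $(\log a-2\Gamma'(1)-{\cal H}_{\pp-1})$-summand is likewise unchanged because at an isolated zero $(c_{\top,X}^{-1})'(TM)$ evaluates to the scalar $(-i)^{-\pp-1}\sum_j\theta_j^{-1}\prod_j\theta_j^{-1}$ which depends only on the tangential action $A$ of $X$ at $p_\ell$, not on the metric; and the middle summand has a scale-invariant integrand but an integration domain that changes from $M\setminus\bigcup_\ell B_\ell'(a)$ to $M\setminus\bigcup_\ell B_\ell'(a/c^2)$.

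The difference of the two currents therefore reduces, modulo $O(a)$, to the local annular integrals
\[
\sum_{\ell\in\J}\int_{B_\ell'(a)\setminus B_\ell'(a/c^2)}\eta\,\frac{\omega^{TM}}{2\pi}\frac{\nu^{\pp-1}}{(\tfrac12\|X\|^2)^{\pp}}.
\]
Following the strategy of Propositions \ref{estimate1} and \ref{limitsFora}, I would approximate $\eta\,\omega^{TM}\nu^{\pp-1}/2\pi$ by its value $a_\ell\,d\vol_M$ at $p_\ell$ (Proposition \ref{tildeEtaAtpl}) and replace $\|X\|^2$ by $\|Ax\|_0^2$ in geodesic coordinates, both with error $O(a)$. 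The substitution $y=Ax$ turns each local integral into a spherically symmetric one whose radial part reduces to $\int_{\sqrt{2a}/c}^{\sqrt{2a}}r^{-1}\,dr=\log c$; combining with the explicit value of $a_\ell$ from Proposition \ref{tildeEtaAtpl}, the contribution at $p_\ell$ becomes $\log c^2\cdot\eta^{[0]}_{p_\ell}(c_{\top,X}^{-1})'(TM)$, and summation over $\ell\in\J$ yields $\log c^2\int_{M_X}\eta(c_{\top,X}^{-1})'(TM)$ as required.

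The main delicate point is the bookkeeping of the $c$-powers in the first and third summands of Theorem \ref{SFormel1}, so that all $c$-dependence is genuinely concentrated in the single annular integral above; once that is in place the radial calculation is elementary and (with the appropriate sign convention for $c<1$, in which case $B_\ell'(a/c^2)\supset B_\ell'(a)$) produces exactly the factor $\log c^2$ needed.
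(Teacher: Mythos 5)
Your argument is correct, and it rests on the same ingredients as the paper's proof: both prove the identity directly from Theorem \ref{SFormel1} without invoking Theorem \ref{BG3.10}, and both observe that only the cutoff domain $B_\ell'(\cdot)$ changes under the rescaling. The difference is one of bookkeeping. The paper inserts $\eta_t:=t^{-N_\infty/2}\eta_1$ and applies Theorem \ref{SFormel1} to $S_{tX}$ directly; after the replacement $X\mapsto tX$ sends $\nu\mapsto t\nu$, $\|X\|^2\mapsto t^2\|X\|^2$, $B_\ell'(a)\mapsto B_\ell'(a/t^2)$, a uniform factor $t^{-\pp-1}$ pulls out of all three terms, and the extra $\log t^2$ emerges simply by writing $\log a=\log(a/t^2)+\log t^2$ and recognizing the remaining terms as the right-hand side of Theorem \ref{SFormel1} for $S_X$ at cutoff parameter $a/t^2$; no annular integral is ever evaluated, because the convergence as $a\to0$ built into Theorem \ref{SFormel1} already forces the compensation. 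You instead use the preliminary relation $S_{cX}(M,-\omega^{TM})=c^{-N_\infty/2-1}S_X(M,-c^2\omega^{TM})$ to pass to a pure metric rescaling, cancel the first and third terms explicitly, and then compute the residual annular integral over $B_\ell'(a)\setminus B_\ell'(a/c^2)$ by the radial substitution and Proposition \ref{tildeEtaAtpl}. Both routes are valid; yours is slightly longer but has the virtue of confirming by direct computation that the coefficient of $\log a$ in Theorem \ref{SFormel1} is exactly the residue that makes the $a$-dependence of the middle term drop out, a useful internal consistency check. (Two small points of precision: the error from replacing $\tilde\eta\wedge\nu^{\pp-1}$ by $a_\ell\,d\vol_M$ over the annulus is $O(\sqrt a)$ rather than $O(a)$, which of course still vanishes; and, as you note, for $c<1$ the annulus orientation reverses so that the integral remains $\log c<0$.)
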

\begin{proof}
Consider $\eta_1\in\bigoplus_q\frak A^{q,q}(M)$ and set $\eta_t:=t^{-N_\infty/2}\eta=\sum_qt^{-q/2}\eta^{[q]}$ for $t\in\BR^+$. By Theorem \ref{SFormel1},
\begin{align*}
\lefteqn{\int_M\eta_1 \left(t^{\frac{N-2\pp}2}S_{tX}(M,-\omega^{TM})\right)=
\int_M\eta_t S_{tX}(M,-\omega^{TM})}
\\=&\sum_{j=1}^{\pp-1}\int_M\eta_t^{[2\pp-2j]}\frac{\omega^{TM}}{2\pi}\cdot\frac{t^{-j-1}\nu^{j-1}}{(\frac12\|\XM\|^2)^j}
\\&+t^{-\pp-1}\int_{M\setminus\bigcup_{\ell\in\J} B_\ell'(a/t^2)}\eta_1^{[0]}\frac{\omega^{TM}}{2\pi}\cdot\frac{\nu^{\pp-1}}{\left(\frac12\|X\|^2\right)^{\pp}}
\\&+(\log \frac{a}{t^2}+\log t^2-2\Gamma'(1)-{\cal H}_{\pp-1})t^{-\pp-1}\int_{M_X}\eta_1^{[0]} \cdot(c_{\top,X}^{-1})'(TM)+O(a)
\\=&t^{-\pp-1}\int_M\eta_1 S_{X}(M,-\omega^{TM})+\log(t^2)t^{-\pp-1}\int_{M_X}\eta_1^{[0]} \cdot(c_{\top,X}^{-1})'(TM).
\end{align*}
\end{proof}
When $d_X\eta_1=0$, then $d_{tX}\eta_t=0$.

\section{\texorpdfstring{The equivariant Bott-Chern current on the projective plane}{The equivariant Bott-Chern current on the projective plane}}
Consider the line bundle ${\cal L}:={\cal O}(1)$ on the projective line $M:={\bf
P}^1\BC$ with the chart
\begin{eqnarray*}
\psi:]0,2\pi[\times]-\pi/2,\pi/2[&\to&{\bf P}^1\BC\subset{\bf R}^3\\
(v,u)&\mapsto&\left(
\begin{array}{c}
\cos u\,\cos v\\ \cos u\,\sin v\\ \sin u
\end{array}\right).\\
\end{eqnarray*}
In these coordinates, the complex structure $J^{TM}$ is given by
$J^{TM}\frac\partial{\partial v}=\cos u\frac\partial{\partial u}$. As before, let $\Omega^{TM}$ denote the curvature of $TM$, which in this case equals the curvature tensor of $M$.
For any $\SO(3)$-invariant metric we find $-\Omega^{TM}(\frac\partial{\partial v},\frac\partial{\partial u})\frac\partial{\partial v}=\cos^2u\cdot\frac\partial{\partial u}$. Thus the 
Fubini-Study form $\omega^{TM}:=2\pi c_1({\cal O}(1))=\pi c_1(TM)=\frac12J^{TM}\Omega^{TM}$ is given by
$\omega^{TM}(\frac\partial{\partial v},\frac\partial{\partial u})=\frac{\cos u}2$.
Then $${\rm vol}({\bf P}^1\BC)=\int_{{\bf P}^1\BC}
\frac{\omega^{TM}}{2\pi}=1.$$ Consider the circle action induced by the vector
field
$X:=\frac\partial{\partial v}$. Thus, $\|X\|^2=\frac{\cos^2u}2$ and
$$
m^{TM}=\nabla^{TM}_\cdot X=\sin u\cdot J^{TM}.
$$
As $TM\cong {\cal O}(2)$ as ${\bf SU}(2)$-equivariant vector bundles, we find $m^{{\cal O}(1)}=\frac{i}2\sin u$ for the corresponding $\frak{su}(2)$-action.
\begin{Rem}
When instead considering the action of $\frak u(2)$, there is an additional non-trivial constant action of multiples of ${\rm id}_{\BR^2}$ on ${\cal O}(1)$, which induces a constant summand for $m^{{\cal O}(1)}$. We shall not do this in this paper.
\end{Rem}
For $\mu:=-im^{{\cal O}(1)}$ 
one finds
$$
d\mu=\frac12\cos u\,du=\iota_X(\omega^{TM})
$$
as in \cite[(2.4)]{BG}, except that the sign of $\omega^{TM}$ is chosen differently.

Let $\eta\in C^\infty({\bf P}^1\BC,\BC)$. Thus in the above coordinates $\frac1{2\pi}\int\left.\eta\right\rvert_{({u\atop v})}\,dv$ depends smoothly on $u$ and thus on $\sin u$. 
Assume that the moment map of $X$ on ${\cal O}(1)$ at the north pole is given by $m^{{\cal O}(1)}=\frac{i}2$. Hence it acts by $m^{N}=i\theta=i$ on $TM=N$ at this point. Thus for the normal bundle $N\to\{p\}$ at any fixed point $p$, one has $(c_{\top,tX}^{-1})'(\mtr N)
=\frac{-1}{c_{\top,tX}(\mtr N)}\sum_{\theta}\frac1{c_1(\bar N_\theta)+it\theta}=\frac{-1}{(it\theta)^2}=\frac{1}{t^2}.
$
\begin{theor}\label{SKasIntegral+}
For $\eta\in C^\infty({\bf P}^1\BC,\BC)$ set
$$
g(\sin u,v):=\left.\eta\right\rvert_{({u\atop v})}\quad\mbox{and}\quad\tilde g(r):=\frac1{2\pi}\int_0^{2\pi}\frac{g(r,v)+g(-r,v)}{2}\,dv.
$$
Then $\int_{M_X}\eta (c_{\top,tX}^{-1})'(TM)=\frac{2\tilde g(1)}{t^2}$ and
\begin{multline*}
\int_M\eta S_{tX}(M,-\omega^{TM})
\\=
\int_{-1}^{1}
\left(
\frac{2\tilde g(r)}{t^2}
-\frac{2\tilde g(1)}{t^2}
\right)\cdot\frac{dr}{1-r^2}
+(\log t^2-2\Gamma'(1))\cdot \frac{2\tilde g(1)}{t^2}.
\end{multline*}
\end{theor}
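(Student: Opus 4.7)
The plan is to apply Theorem \ref{SFormel1} directly with $tX$ in place of $X$ and $M = \mathbf{P}^1\mathbb{C}$, so that $\pp = 1$. In this dimension the formal expression $(\ldots)^{[<2\pp-2]}$ vanishes, the factor $(d(tX)^\flat/4\pi i)^{\pp-1}$ equals $1$, and $\mathcal{H}_{\pp-1} = \mathcal{H}_0 = 0$, so the general identity collapses to
\begin{equation*}
\int_M \eta\, S_{tX}(M,-\omega^{TM}) = \int_{M\setminus\bigcup_\ell B'_\ell(a)} \eta\,\frac{\omega^{TM}/2\pi}{\tfrac12\|tX\|^2} + (\log a - 2\Gamma'(1))\int_{M_X}\eta\,(c_{\top,tX}^{-1})'(TM) + O(a).
\end{equation*}
The fixed set consists of the two poles $u = \pm\pi/2$; using the computation $(c_{\top,tX}^{-1})'(\bar N)|_p = 1/t^2$ recorded above (with $\theta = \pm 1$), the residue term reduces to $(\eta(\text{north}) + \eta(\text{south}))/t^2 = 2\tilde g(1)/t^2$, which proves the first identity in the theorem.

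To evaluate the bulk integral I work in the $(u,v)$ chart. Using $\omega^{TM}/2\pi = (\cos u/4\pi)\,du\wedge dv$ and $\tfrac12\|tX\|^2 = t^2\cos^2 u/4$, the combined integrand equals $\eta/(\pi t^2\cos u)\,du\wedge dv$; the substitution $r = \sin u$ (so $dr = \cos u\, du$ and $1 - r^2 = \cos^2 u$) turns this into $\eta\,dr\wedge dv/(\pi t^2(1 - r^2))$, and the cut-off $\|tX\|^2 \geq 2a$ becomes $|r|\leq r_0 := \sqrt{1 - 4a/t^2}$. Averaging over $v$ and using that $(1-r^2)^{-1}$ is even in $r$ to fold the $r$ and $-r$ contributions into $\tilde g$, one obtains
\begin{equation*}
\int_{M\setminus\bigcup_\ell B'_\ell(a)} \eta\,\frac{\omega^{TM}/2\pi}{\tfrac12\|tX\|^2} = \frac{2}{t^2}\int_{-r_0}^{r_0}\tilde g(r)\,\frac{dr}{1-r^2}.
\end{equation*}

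The final step is to isolate the logarithmic divergence by writing $\tilde g(r) = (\tilde g(r) - \tilde g(1)) + \tilde g(1)$. The first piece converges to $\int_{-1}^{1}(\tilde g(r) - \tilde g(1))\,dr/(1-r^2)$ with error $O(a)$, because $\tilde g$ is smooth at $r = \pm 1$ (a consequence of $\eta$ being smooth at the poles together with the $v$-averaging, which removes the odd-in-$\phi$ terms in the expansion $\sin u = 1 - \phi^2/2 + \cdots$ for $\phi := \pi/2 - u$), so the integrand is bounded on $|r| > r_0$ while the excluded region has length $O(a/t^2)$. The second piece equals $\tilde g(1)\log((1+r_0)/(1-r_0)) = \tilde g(1)\log(t^2/a) + O(a)$, whose $\log a$ exactly cancels the one in the residue term, leaving $\log t^2 - 2\Gamma'(1)$. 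Letting $a \to 0^+$ yields the stated formula. The only non-routine step is the boundary estimate in the first piece; it hinges on the smoothness of $\tilde g$ at $r = \pm 1$ as a function of $r$ itself, not merely of $g$ pointwise.
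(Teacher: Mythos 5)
Your proof is correct and takes essentially the same route as the paper: apply Theorem~\ref{SFormel1} with $\pp=1$ (so the $[<2\pp-2]$ term vanishes, $\nu^{\pp-1}=1$, and ${\cal H}_{\pp-1}=0$), use $(c^{-1}_{\top,tX})'(TM)\rvert_p=1/t^2$ for the residue term, and convert the bulk integral via the substitution $r=\sin u$. The only minor presentational difference is that the paper first replaces $\eta$ by its symmetrization $\tilde\eta$ using the invariance of $S_{tX}$ under the circle action and under the orientation-reversing flip $(u,v)\mapsto(-u,v)$, whereas you perform the $v$-averaging and $r\mapsto -r$ folding directly inside the explicit bulk integral after the change of variables --- the two steps are equivalent, and both rest on the same smoothness of $\tilde g$ in $\sin u$ at the poles to justify the $O(a)$ error on the excluded annulus.
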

\begin{proof}
The equivariant Bott-Chern current $S$ is $X$-invariant. It switches sign under the isometry $r:({u\atop v})\to ({-u\atop v})$, as $X$ is $r$-invariant and $r^*\omega^{TM}=-\omega^{TM}$. As $r$ changes the orientation, $\int_M(r^*\eta)S_X(M,-\omega^{TM})=\int_M\eta S_X(M,-\omega^{TM})$. Hence in the integrals $\eta$ and $g$ can be replaced by their mean value $\tilde\eta$, $\tilde g$ over the compact orbit of these symmetries. Let $\tilde\eta_0:=\tilde g(1)$ denote the value of $\tilde\eta$ at the poles.
Applying Theorem \ref{SFormel1}
  results in
\begin{eqnarray*}
\lefteqn{\int_M\eta S_{tX}(M,-\omega^{TM})}
\\&=&\int_{M}(\tilde\eta-\tilde\eta_0)\frac{\omega^{TM}}{2\pi}\left(\frac12\|tX\|^2\right)^{-\pp}
+\int_{M\setminus\bigcup_\ell B_\ell'(a)}\tilde\eta_0\frac{\omega^{TM}}{2\pi}\left(\frac12\|tX\|^2\right)^{-\pp}
\\&&+(\log a-2\Gamma'(1)-{\cal H}_{\pp-1})\int_{M_X}\tilde\eta (c_{\top,tX}^{-1})'(TM)+O(a)
\\&=&\int_{-\pi/2}^{\pi/2}
\left(
\tilde g(\sin u)
-\tilde g(1)
\right)\cdot\frac{\cos u}{4\pi}(\frac{t^2\cos^2u}4)^{-1}\,2\pi \,du
\\&&+\int_{-\cos^{-1}\frac{2\sqrt{a}}t}^{\cos^{-1}\frac{2\sqrt{a}}t}
\tilde g(1)
\cdot\frac{\cos u}{4\pi}(\frac{t^2\cos^2u}4)^{-1}\,2\pi \,du
\\&&+(\log a-2\Gamma'(1))\cdot 2 \tilde g(1)\cdot t^{-2}+O(a).
\end{eqnarray*}
Using the substitution $r:=\sin u$ in both integrals on the right hand side, we get the expression in the Theorem as the second integral equals
\[
\tilde g(1)\cdot\int_{-\sqrt{1-4a/t^2}}^{\sqrt{1-4a/t^2}}\frac{2\,dr}{t^2(1-r^2)}=-\tilde g(1)\cdot\frac4{t^2}{\rm Artanh\,}
\sqrt{1-\frac{4a}{t^2}}=
\tilde g(1)\cdot \frac2{t^2}\log\frac{t^2}{a}+O(a).
\]
\end{proof}
The expression in Theorem \ref{SKasIntegral+} can be given a more combinatorial form for $\eta$ analytic. For $m\geq0$ let ${\cal H}_m=\sum_{j=1}^m\frac1j$ denote the harmonic numbers as in eq. (\ref{harmonic}), in particular ${\cal H}_0=0$.
\begin{theor}\label{SKwithStar+}
Set $\phi(r)^\#:=\sum_{m=1}^\infty \phi_m (2{\cal H}_{2m-1}-{\cal H}_{m-1})$ for any complex power series $\phi(r)=\sum_{m=0}^\infty \phi_mr^{2m}$. Assume that $\tilde g(r)$ is analytic at $r=0$ with radius of convergence $>1$. Then
\begin{eqnarray*}
\int_M\eta S_{tX}(M,-\omega^{TM})&=&
-\left(\frac{2\tilde g(r)}{t^2}\right)^\#
+(\log t^2-2\Gamma'(1))\cdot \frac{2\tilde g(1)}{t^2}.
\end{eqnarray*}
\end{theor}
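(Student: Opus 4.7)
By Theorem~\ref{SKasIntegral+}, the summand $(\log t^2-2\Gamma'(1))\cdot 2\tilde g(1)/t^2$ already matches the corresponding term on the right-hand side of the target formula. The task therefore reduces to showing
\[
\int_{-1}^{1}\bigl(\tilde g(r)-\tilde g(1)\bigr)\,\frac{dr}{1-r^2}=-\sum_{m=1}^\infty \tilde g_m\bigl(2{\cal H}_{2m-1}-{\cal H}_{m-1}\bigr),
\]
since multiplying by $2/t^2$ then reproduces $-(2\tilde g(r)/t^2)^{\#}$.

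First I would observe that $\tilde g$ is by construction an even function of $r$ (it is defined as the symmetric average of $g(r,v)$ and $g(-r,v)$), so its Taylor expansion contains only even powers, $\tilde g(r)=\sum_{m=0}^\infty \tilde g_m r^{2m}$. The hypothesis that the radius of convergence strictly exceeds $1$ yields uniform convergence of this series on the closed interval $[-1,1]$, which legitimises term-by-term integration:
\[
\int_{-1}^{1}\frac{\tilde g(r)-\tilde g(1)}{1-r^2}\,dr=\sum_{m=1}^\infty \tilde g_m\int_{-1}^{1}\frac{r^{2m}-1}{1-r^2}\,dr.
\]
The integrand is bounded near $r=\pm 1$ because $\tilde g(r)-\tilde g(1)$ vanishes at $r=1$ and, by evenness, also at $r=-1$, so no improper-integral issues arise.

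The core calculation is the evaluation, for $m\geq 1$, of the elementary integral via the geometric-series identity $(r^{2m}-1)/(r^2-1)=\sum_{j=0}^{m-1}r^{2j}$:
\[
\int_{-1}^{1}\frac{r^{2m}-1}{1-r^2}\,dr=-2\sum_{j=0}^{m-1}\frac{1}{2j+1}=-\bigl(2{\cal H}_{2m-1}-{\cal H}_{m-1}\bigr),
\]
where the last equality follows from the arithmetic identity ${\cal H}_{2m-1}=\sum_{j=0}^{m-1}1/(2j+1)+\tfrac12{\cal H}_{m-1}$, obtained by separating the odd- and even-indexed contributions to ${\cal H}_{2m-1}$. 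Summing against $\tilde g_m$ and comparing with the definition of $(\cdot)^{\#}$ completes the proof.

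The only step requiring mild care is the interchange of sum and integral; this is precisely why the strict inequality in the radius of convergence is imposed. Once uniform convergence on $[-1,1]$ is in hand, the remainder reduces to the elementary harmonic-sum identity above, so I do not expect any genuine obstacle.
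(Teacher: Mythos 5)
Your proposal is correct and takes essentially the same route as the paper: expand $\tilde g$ in its even power series, interchange sum and integral (justified by the radius-of-convergence hypothesis), and reduce to the elementary evaluation $\int_{-1}^1\frac{1-r^{2m}}{1-r^2}\,dr=\sum_{j=1}^m\frac{2}{2j-1}=2{\cal H}_{2m-1}-{\cal H}_{m-1}$. The paper's version is terser (it simply quotes that integral and says ``the result follows''), while you spell out the geometric-series factorisation, the odd/even split of ${\cal H}_{2m-1}$, and the convergence argument, but there is no substantive difference in method.
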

\begin{proof}
The integrands Laurent expansion by $t$ in
$\int_{-1}^{1}
\left(
\frac{2\tilde g(r)}{t^2}
-\frac{2\tilde g(1)}{t^2}
\right)\cdot\frac{dr}{1-r^2}$
provides integrals of the form
$$
\int_{-1}^1(1-r^{2m})\frac{dr}{1-r^2}
=\sum_{j=1}^{m}\frac2{2j-1}=\left\{
{2{\cal H}_{2m-1}-{\cal H}_{m-1}\atop0}
\mbox{ if }{m>0\atop m=0}
\right.
$$
for $m\geq0$. The result follows by Theorem \ref{SKasIntegral+}.
\end{proof}
For the computation of the torsion form we shall need the following variant.
\begin{theor}\label{SKasIntegral}\label{SKwithStar}
Set $\phi(t)^*:=\sum_{m=0}^\infty \phi_m(2{\cal H}_{2m+1}-{\cal H}_{m})t^{2m}$ for any complex Laurent power series $\phi(t)=\sum_{m=-1}^\infty \phi_mt^{2m}$. For $\eta\in C^\infty({\bf P}^1\BC,\BC)$ define $g$,$\tilde g$ as in Theorem \ref{SKasIntegral+} and set $\eta_{t|({u\atop v})}:=\left.\eta\right\rvert_{({\arcsin(t\sin u)\atop v})}$ for $t\in[-1,1]$.
\begin{enumerate}
\item $\int_{M_X}\eta_t (c_{\top,tX}^{-1})'(TM)=\frac{2\tilde g(t)}{t^2}$ and
\begin{multline*}
\int_M\eta_t S_{tX}(M,-\omega^{TM})
\\=
\int_{-1}^{1}
\left(
\frac{2\tilde g(tr)}{t^2}
-\frac{2\tilde g(t)}{t^2}
\right)\cdot\frac{dr}{1-r^2}
+(\log t^2-2\Gamma'(1))\cdot \frac{2\tilde g(t)}{t^2}.
\end{multline*}
\item Assume that $\tilde g(t)$ is analytic at $t=0$ with radius of convergence $>1$. Then
\begin{multline*}
\int_M\eta_t S_{tX}(M,-\omega^{TM})=
-\left(\frac{2\tilde g(t)}{t^2}\right)^*
+(\log t^2-2\Gamma'(1))\cdot \frac{2\tilde g(t)}{t^2}
\\=-\left(\int_{M_X}\eta_t (c_{\top,tX}^{-1})'(TM)\right)^*+(\log t^2-2\Gamma'(1))\int_{M_X}\eta_t (c_{\top,tX}^{-1})'(TM).
\end{multline*}
\end{enumerate}
\end{theor}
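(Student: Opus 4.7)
The plan is to reduce both parts of the theorem to Theorems~\ref{SKasIntegral+} and~\ref{SKwithStar+} via a direct substitution. First I would observe that, because $\eta_t|_{(u,v)} = \eta|_{(\arcsin(t\sin u),v)}$, if $g$ represents $\eta$ in the form $g(\sin u, v) = \eta|_{(u,v)}$, then $\eta_t$ is represented analogously by $g_t(r,v) = g(tr,v)$. Consequently the symmetrized, $v$-averaged function for $\eta_t$ is
\begin{equation*}
\tilde g_t(r) = \frac{1}{2\pi}\int_0^{2\pi}\frac{g(tr,v)+g(-tr,v)}{2}\,dv = \tilde g(tr),
\end{equation*}
and in particular $\tilde g_t(1) = \tilde g(t)$. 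Applying Theorem~\ref{SKasIntegral+} to the smooth function $\eta_t$ in place of $\eta$ (keeping the vector field $tX$, so that the fixed-point factor $(c^{-1}_{\top,tX})'(TM) = 1/t^2$ at each pole is unchanged) immediately yields Part~(1): the fixed-point integral evaluates to $2\tilde g_t(1)/t^2 = 2\tilde g(t)/t^2$, and the bulk integral becomes $\int_{-1}^{1}\bigl(2\tilde g(tr)/t^2 - 2\tilde g(t)/t^2\bigr)\,dr/(1-r^2)$ plus the logarithmic boundary term.

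For Part~(2) I would expand $\tilde g(t) = \sum_{m\geq 0} a_m t^{2m}$; the evenness is automatic from the symmetrization in the definition of $\tilde g$ and was already used in Theorem~\ref{SKwithStar+}. The analyticity hypothesis (radius of convergence $>1$) gives uniform convergence on $[-1,1]$, so one may interchange sum and integral:
\begin{equation*}
\int_{-1}^{1}\frac{2(\tilde g(tr)-\tilde g(t))}{t^2(1-r^2)}\,dr
= -\sum_{n\geq 1} 2a_n\, t^{2n-2}\int_{-1}^{1}\frac{1-r^{2n}}{1-r^2}\,dr.
\end{equation*}
The elementary identity $\int_{-1}^{1}(1-r^{2n})/(1-r^2)\,dr = 2\sum_{j=0}^{n-1}\frac{1}{2j+1} = 2{\cal H}_{2n-1}-{\cal H}_{n-1}$, already invoked in the proof of Theorem~\ref{SKwithStar+}, turns this into $-\sum_{n\geq 1}2a_n(2{\cal H}_{2n-1}-{\cal H}_{n-1})t^{2n-2}$. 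Reindexing $m = n-1$ rewrites this as $-\sum_{m\geq 0}2a_{m+1}(2{\cal H}_{2m+1}-{\cal H}_m)t^{2m}$, which is precisely $-\bigl(2\tilde g(t)/t^2\bigr)^{\!*}$: the Laurent series $2\tilde g(t)/t^2 = \sum_{m\geq -1}\phi_m t^{2m}$ has $\phi_m = 2a_{m+1}$, and the $*$-operator inserts the factor $2{\cal H}_{2m+1}-{\cal H}_m = 2{\cal H}_{2(m+1)-1}-{\cal H}_{(m+1)-1}$, which is exactly the shifted harmonic combination produced by the integral. The second equality in Part~(2) is then a tautology, using the fixed-point formula from Part~(1).

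There is no real obstacle here: the proof is essentially bookkeeping, once one recognises that the definition of $\eta_t$ is tailored so that $\tilde g$ picks up exactly the $t$-dependence needed to convert the scalar $\#$-operation into the Laurent-series $*$-operation. The only point to be careful about is the single-unit index shift: the harmonic combination $2{\cal H}_{2n-1}-{\cal H}_{n-1}$ arising from the $r$-integral sits naturally on the $\tilde g$-series indexed from $m=0$, while the $*$-operation acts on the Laurent series $2\tilde g(t)/t^2$ indexed from $m=-1$, and these are matched by the reindexing $m \to m+1$.
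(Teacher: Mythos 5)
Your proof is correct and follows the paper's route exactly: you substitute $\eta_t$ into Theorem~\ref{SKasIntegral+} (observing $\tilde g_t(r)=\tilde g(tr)$ and hence $\tilde g_t(1)=\tilde g(t)$), and then for Part~(2) you verify the identity that the paper states compactly as $\bigl(\tfrac{\tilde g(t)}{t^2}\bigr)^{*}=\bigl(\tfrac{\tilde g(tr)}{t^2}\bigr)^{\#}$, with the index shift $m\mapsto m+1$ matching the $\#$-combination $2{\cal H}_{2n-1}-{\cal H}_{n-1}$ to the $*$-combination $2{\cal H}_{2m+1}-{\cal H}_m$. Your write-up just spells out the bookkeeping that the paper leaves implicit.
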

\begin{proof}
This follows immediately from Theorems \ref{SKasIntegral+}, \ref{SKwithStar+} by replacing $\eta$ with $\eta_t$ and using $\left(\frac{\tilde g(t)}{t^2}\right)^*=\left(\frac{\tilde g(tr)}{t^2}\right)^\#$.
\end{proof}

\section{The defining property of $S_X$}
The equivariant Bott-Chern current verifies the critical relation
\begin{theor}\label{criticalBottChern}(\cite[Th. 3.9]{BG}) Using the inverse of the equivariant top Chern class, the identity
$$\frac{\bar\partial_X\partial_X}{2\pi i} S_X(M,-\omega^{TM})=
1-c_{\top,X}^{-1}(\mtr{N_{M_X/M}})\delta_{M_X}
$$
holds.
\end{theor}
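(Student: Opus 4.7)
The plan is to verify the identity by pairing both sides against an arbitrary smooth test form $\eta\in\bigoplus_q\frak A^{q,q}(M)$ and using the explicit representation of $S_X$ from Theorem \ref{SFormel1}. By the Remark following Corollary \ref{ScalingSX} we may average $\eta$ over the compact torus closure of the flow of $X$, so we assume $L_X\eta=0$. Then $d_X^2\eta=-2\pi iL_X\eta=0$ and $\partial_X\bar\partial_X\eta=-\bar\partial_X\partial_X\eta$, so $\alpha:=\frac{\bar\partial_X\partial_X}{2\pi i}\eta$ is itself $d_X$-closed. By current duality the identity is equivalent to
\[
\int_M \alpha\cdot S_X(M,-\omega^{TM})=\int_M \eta-\sum_{\ell\in \J}\eta^{[0]}_{p_\ell}\cdot c^{-1}_{\top,X}(T_{p_\ell}M),
\]
read degree by degree.

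My next step would be to apply Theorem \ref{SFormel1} to $\alpha$ and to identify the geometric meaning of the integrand. The key observation is that the equivariant form
\[
K:=\frac{\omega^{TM}}{2\pi}\cdot\frac{1}{\tfrac12\|X\|^2-\nu}=-\frac{\omega^{TM}}{2\pi}\cdot\sum_{s\geq1}\nu^{s-1}\bigl(\tfrac12\|X\|^2\bigr)^{-s}
\]
is an equivariant Cartan primitive of the constant $1$ on $M\setminus M_X$, up to $d_X$-exact pieces: using $d_X(X^\flat/(4\pi i))=\nu-\tfrac12\|X\|^2$, the nilpotency $(\bar\partial_X\partial_X)^2=0$ on $X$-invariant forms, and the Hamiltonian identity $d\mu=\iota_X\omega^{TM}$, one finds $(\bar\partial_X\partial_X/2\pi i)K\equiv 1\pmod{d_X\text{-exact}}$ away from the zero set of $X$. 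Transferring $\bar\partial_X\partial_X$ off of $\alpha$ via integration by parts on the complement $M\setminus\bigcup_\ell B_\ell'(a)$, where $K$ is smooth, produces the bulk contribution $\int_M\eta$ together with boundary integrals over $\partial B_\ell'(a)$.

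Finally, I would analyse those boundary integrals in the limit $a\to 0^+$ using Propositions \ref{estimate1}, \ref{tildeEtaAtpl} and \ref{limitsFora}: in geodesic coordinates at each $p_\ell$ they localise to $-\eta^{[0]}_{p_\ell}\cdot c^{-1}_{\top,X}(T_{p_\ell}M)$, while the apparent logarithmic divergence $\log a$ cancels against the regularizer $(\log a-2\Gamma'(1)-{\cal H}_{\pp-1})\int_{M_X}\alpha(c^{-1}_{\top,X})'(TM)$ already present in Theorem \ref{SFormel1}. A parallel cancellation disposes of the $\Gamma'(1)$ and harmonic-number constants, leaving exactly the stated residue.

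The main obstacle is proving the Cartan primitive claim $\bar\partial_X\partial_X K/(2\pi i)\equiv 1$ modulo $d_X$-exact forms on $M\setminus M_X$: this hinges on the precise algebra of the second-order operator $\bar\partial_X\partial_X$ applied to the geometric series $\sum_{s\geq 1}\nu^{s-1}(\tfrac12\|X\|^2)^{-s}$, together with the Hamiltonian relation controlling $d_X\omega^{TM}$. Equally delicate is the bookkeeping of the harmonic-number and $\Gamma'(1)$ constants produced by the $a$-asymptotics of Proposition \ref{limitsFora}, which must exactly match the regularizer of Theorem \ref{SFormel1} so that the only finite residue at $p_\ell$ is the algebraic class $c^{-1}_{\top,X}(T_{p_\ell}M)$.
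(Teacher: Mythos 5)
Your proposal takes a genuinely different route from the paper. The paper's argument (in the section ``The defining property of $S_X$'') is only a verification on $M = {\bf P}^1\BC$: it reads off the degree-$0$ part of $\bar\partial_X\partial_X\eta/(2\pi i)$ in the $(u,v)$ coordinates, splits the identity into equations (\ref{DefSKP1-1}) and (\ref{DefSKP1-2}), and checks each one against the explicit $S$-integral of Theorem \ref{SKasIntegral+}. You instead attempt a derivation on a general $M$ with isolated fixed points, by viewing $K := \frac{\omega^{TM}}{2\pi}\bigl(\frac12\|X\|^2-\nu\bigr)^{-1}$ as a Cartan primitive of $1$ away from $M_X$, integrating by parts against the formula of Theorem \ref{SFormel1}, and localising the resulting boundary terms at the fixed points. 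This is closer in spirit to the Bismut--Goette proof of \cite[Th. 3.9]{BG} and is more general; the paper trades that generality for a hands-on one-dimensional check.

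There are, however, two concrete gaps. First, the claim ``$\frac{\bar\partial_X\partial_X}{2\pi i}K \equiv 1 \pmod{d_X\text{-exact}}$ on $M\setminus M_X$'' is vacuous as stated: with $\zeta := \frac12\|X\|^2-\nu = -d_X\frac{X^\flat}{4\pi i}$ one has $d_X\zeta = 0$ (since $X$ is Killing, so $L_X X^\flat=0$), and therefore $1 = -d_X\bigl(\frac{X^\flat}{4\pi i}\,\zeta^{-1}\bigr)$ is itself $d_X$-exact away from $M_X$; moreover $\bar\partial_X\partial_X K$ is $\bar\partial_X$-exact by construction, so both sides of your congruence are $\equiv 0$ and the congruence carries no information. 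The usable content is a \emph{quantitative} identity $\frac{\bar\partial_X\partial_X}{2\pi i}K = 1 + \partial_X\beta + \bar\partial_X\beta'$ in which the growth of $\beta,\beta'$ near $M_X$ is controlled precisely, because it is exactly the boundary integrals of these exact pieces over $\partial B_\ell'(a)$ that must produce $-c_{\top,X}^{-1}\delta_{M_X}$. You flag this as ``the main obstacle'' — rightly so; it is essentially the whole theorem.

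Second, your bookkeeping of the regularizer is wrong. When Theorem \ref{SFormel1} is applied to $\alpha := \frac{\bar\partial_X\partial_X}{2\pi i}\eta$, the regularizing term $(\log a - 2\Gamma'(1) - {\cal H}_{\pp-1})\int_{M_X}\alpha\,(c_{\top,X}^{-1})'(TM)$ is governed by $\alpha^{[0]}$ at the fixed points. But $\alpha^{[0]} = 2\pi i\,\eta^{[2]}(X^{0,1},X^{1,0}) - X^{1,0}.\eta^{[0]}$ vanishes at every $p_\ell$ because $X(p_\ell)=0$. Hence the regularizer is identically zero: there is no $\log a$, no $\Gamma'(1)$, and no ${\cal H}_{\pp-1}$ available to cancel anything, so the boundary integrals over $\partial B_\ell'(a)$ must converge on their own as $a\to 0^+$. (The paper's ${\bf P}^1\BC$ computation is built around this observation, via the choices $\tilde g_1(\pm1)=0$ and $\tilde g_0(\pm1)=0$.) If your boundary-term estimates were to produce a $\log a$ divergence, there would be nothing to absorb it — that would be a sign of an error, not a feature of the proof.
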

In this section we shall quickly illustrate how this relation can be seen using the formula in Theorem \ref{SKasIntegral+}. Because of
$$
\left(
\frac{\bar\partial_{X}\partial_{X}}{2\pi i}\eta
\right)^{[0]}
=2\pi i\eta^{[2]}(X^{0,1},X^{1,0})-X^{1,0}.\eta^{[0]},
$$
when setting $\eta=:f_1\omega^{TM}+f_0$ with $f_0,f_1\in C^\infty(M)$, Theorem \ref{criticalBottChern} translates to
\begin{align}\label{DefSKP1-1}
2\pi i\int_Mf_1\cdot\omega^{TM}(X^{0,1},X^{1,0})S_X(M,-\omega^{TM})&=\int_Mf_1\cdot\omega^{TM},
\\
-\int_M(X^{1,0}.f_0)S_X(M,-\omega^{TM})&=-\int_{M_X}f_0 c_{\top,X}^{-1}(N_{M_X/M}).
\label{DefSKP1-2}
\end{align}
In the coordinates $u,v$ as above, $X^{1,0}=\frac12(\frac\partial{\partial v}-i\cos u\frac\partial{\partial u})$ and $2\pi i\omega^{TM}(X^{0,1},X^{1,0})=\frac\pi2\cos^2u$. Assume w.l.o.g. that $f_1$ and $X^{1,0}.f_0$ are invariant under $X$ and under $({u\atop v})\mapsto({-u\atop v})$. The $X$-invariance of $X^{1,0}.f_0$ is in fact equivalent to the $X$-invariance of $f_0$, as the equation $\frac{\partial^2}{\partial v^2} f_0=0$ for the real part implies $\frac\partial{\partial v} f_0=$const.. And $f_0$ is periodic in $v$, thus $\frac\partial{\partial v} f_0=0$.  
Now with
$$
\tilde g_1(\sin u):=\frac\pi2 f_1(\left({u\atop v}\right))\cos^2 u,\qquad \tilde g_1(\pm1):=0
$$
as in Theorem \ref{SKasIntegral+}, equation (\ref{DefSKP1-1}) is equivalent to
\begin{multline*}
2\pi i\int_Mf_1\cdot\omega^{TM}(X^{0,1},X^{1,0})S_X(M,-\omega^{TM})
\\=
\int_{-1}^12\tilde g_1(r)\frac{dr}{1-r^2}
=\int_0^{2\pi}\int_{-\pi/2}^{\pi/2}\frac12f_1(\left({u\atop v}\right))\cos u\,du\wedge dv
=
\int_Mf_1\cdot\omega^{TM}.
\end{multline*}
The real part of $X^{1,0}.f_0=\frac12\frac{\partial f_0}{\partial v}-\frac{i}2\cos u\cdot \frac{\partial f_0}{\partial u}$ does not contribute because of the $X$-invariance of $X^{1,0}.f_0$. Setting as in Theorem \ref{SKasIntegral+}
\begin{eqnarray*}
\tilde g_0(\sin u):=\frac{-i\cos u}2\cdot\frac{\partial f_0}{\partial u},\qquad \tilde g_0(\pm1):=0
\end{eqnarray*}
one finds
\begin{eqnarray*}
-\int_M(X^{1,0}.f_0)S_X(M,-\omega^{TM})&=&
\int_{-\pi/2}^{\pi/2}i\frac{\partial f_0}{\partial u}\,du
\\&=&if_0(\left({\pi/2\atop v}\right))-if_0(\left({-\pi/2\atop v}\right)).
\end{eqnarray*}
Using $\left.c_{\top,X}^{-1}(N_{M_X/M})\right\rvert_{N}=\frac1{i}=-i$, $\left.c_{\top,X}^{-1}(N_{M_X/M})\right\rvert_{S}=-\frac1{i}=i$, equation ({\ref{DefSKP1-2}}) and thus the equation in Theorem \ref{criticalBottChern} follows.

\section{\texorpdfstring{The height of ${\bf P}^1_\BZ$}{The height of P1Z}}\label{TheheightofP1Z}
One of the applications of Bismut's equivariant Bott-Chern current is a residue formula (in the spirit of Bott's formula) in Arakelov geometry (\cite{KR2}). In this section we verify that the formula gives the correct classically well-known value for the height of the projective plane over ${\rm Spec}\,\BZ$. We refer to \cite{Soule} for the concepts of Arakelov geometry and for the associated notations. By \cite[p. 70]{Soule}, the height of the projective plane $f:{\bf P}^1_\BZ\to{\rm Spec}\,\BZ$ with respect to the line bundle ${\cal L}:={\cal O}(1)$ is given by
$
\widehat{\deg}(f_*\hat c_1(\overline{{\cal O}(1)})^2)\in\BR
$ in terms of the Arakelov characteristic class $\hat c_1$ having values in the Gillet-Soul\'e intersection theory $\widehat{CH}({\bf P}^1_\BZ)$. Let ${\cal T}:={\rm Spec}\,\BZ[X,X^{-1}]$ be the one-dimensional torus group scheme and consider its canonical action on ${\bf P}^1_\BZ$ with fixed point scheme consisting of two copies of ${\rm Spec}\,\BZ$.
Let $r$
denote the additive characteristic class which is defined in \cite[p. 90]{KR2} as
\begin{eqnarray*}
\left.r_X(L)\right\rvert_{p}&:=&
-\sum_{j\geq 0}\frac{(-c_1(L))^j}{(i\phi)^{j+1}}\left(
-2\Gamma'(1)+2\log|\phi|-\sum_{k=1}^j\frac1{k}
\right)\in H^\bullet(M_X)\\
\end{eqnarray*}
for $L$ a line bundle acted upon by $X$ with an angle $\phi\in{\bf R}$ at 
$p\in M_{X}$. 
According to the residue formula in Arakelov geometry proven in \cite[Th. 2.11]{KR2}, the height can be computed using equivariant Arakelov characteristic classes $\hat c_{1,t}$, $\hat c_{\top,t}$ and the normal bundle $\bar N$ as
\begin{multline}
\label{XR2main}
\widehat{\deg}(f_*\hat c_1(\mtr{{\cal O}(1)})^2)=\widehat{\deg}\left(f^{\cal T}_*\frac{\hat c_1(\overline{{\cal O}(1)})^2)}{\hat c_{\top,t}(\bar N)}\right)
\\+\frac12\int_{{\bf P}^1\BC}c_{1,X}(\overline{{\cal O}(1)})^2S_X({\bf P}^1\BC,-\omega^{T{\bf P}^1\BC})
-\frac12\int_{{\bf P}_{\cal T}^1\BC}c_{1,X}({\cal O}(1))^2\frac{r_X(N)}{c_{\top,X}(N)}.
\end{multline}
Classically, at the fixed point subscheme $\ar
c_1(\mtr {\cal L})=\ar c_1(\mtr N)/2=0$, thus the arithmetic term on the right hand side of the residue formula (\ref{XR2main}) vanishes.  At a fixed point $p$ let $tX$ act by an angle $\phi$ on ${\cal O}(1)$ and by an angle $\theta$ on $N$. In our case the angles $\phi$
and $\theta$ at the fixed points are given by $\pm \frac t 2$ and $\pm t$,
respectively. As in \cite[p. 98]{KR2},
\begin{multline*}
-\frac12\int_{{\bf P}_{\cal T}^1\BC}c_{1,X}({\cal L})^{2}\frac{r_X(N)}{c_{\top,X}(N)}
=
-\frac1{2}\sum_{p\in {{\bf P}_{\cal T}^1\BC}} \frac{ c_{1,X}({\cal L})^{2}} {
c_{\top,X}(N)} r_X(N)
\\=
\sum_{p\in {{\bf P}_{\cal T}^1\BC}}
\frac{\phi_p^{2}}{\prod_\theta\theta}\sum_\theta\frac{-\Gamma'(1)+
\log|\theta|}{\theta}
=-\frac{\Gamma'(1)}2+\frac14\log t^2.
\end{multline*}
According to Theorem \ref{SKwithStar} we get with $\eta_t=(m^{\cal L}(tX))^2=-\frac{t^2}4\sin^2 u$ and $\tilde g(t)=-\frac14t^2$
\begin{eqnarray*}
\frac12\int_{{{\bf P}^1\BC}}\eta_t S_{tX}({\bf P}^1\BC,-\omega^{T{\bf P}^1\BC})=\frac12(1-\frac12\log t^2+\Gamma'(1)).
\end{eqnarray*}
Hence the residue formula in Arakelov geometry \cite[Th. 2.11]{KR2} states in this case
\begin{equation}
\ar\deg f_*\ar c_1(\mtr {\cal L})^2=\frac1 2
\end{equation}
which is the well-known classical value (\cite[p. 71]{Soule}).

\section{\texorpdfstring{Lie algebra equivariant torsion on ${\bf P}^1\BC$}{X-equivariant torsion on P1C}}
We employ the following special case of Bismut-Goette's main result:
\begin{theor} (\cite[Th. 0.1]{BG})\label{BGmain}
For $|t|$ sufficiently small,
\begin{eqnarray*}
\lefteqn{
T_{e^{tX}}({\bf P}^1\BC,\mtr{{\cal O}(\ell)})-T_{{\rm id},tX}({\bf P}^1\BC,\mtr{{\cal O}(\ell)})
}\\&=&\int_{{\bf P}^1\BC}\Td_{tX}(\mtr{T{\bf P}^1\BC})\ch_{tX}(\mtr{{\cal O}(\ell)})S_{tX}({\bf P}^1\BC,-\omega^{T{\bf P}^1\BC})
\\&&-\int_{({\bf P}^1\BC)_X}\Td_{e^{tX}}(T{\bf P}^1\BC)\ch_{e^{tX}}({\cal O}(\ell))I_{tX}(N_{{\bf P}^1\BC_X}).
\end{eqnarray*}
\end{theor}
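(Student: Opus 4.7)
The plan is to invoke the general comparison theorem of Bismut--Goette and specialize to $M = {\bf P}^1\BC$, writing the two torsions as regularized Mellin transforms of supertraces and interpolating between them. The $(g,X)$-equivariant torsion $T_{e^{tX}}(\mtr{{\cal O}(\ell)})$ is built from heat-kernel supertraces containing the group element $g=e^{tX}$ inserted into a fixed $\bar\partial$-Laplacian, while $T_{{\rm id},tX}(\mtr{{\cal O}(\ell)})$ is built from the Bismut--Goette deformation $C^M_{X,t}=\sqrt{t}(\bar\partial+\bar\partial^{*})+\frac1{2\sqrt{2t}}c(X)$ (no group insertion, but with Clifford contraction by $X$). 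The two are equal to leading order once the Lie algebra data is exponentiated, and the difference is what we want to compute.

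First, I would construct a two-parameter family of Bismut-type operators $\mathbb{B}_{u,t}$ such that at $u=0$ one recovers the integrand of $Z(s)$ for $T_{{\rm id},tX}$ and at $u=1$ one recovers that of $T_{e^{tX}}$; this requires interpolating between the Clifford symbol $\frac{1}{2\sqrt{2t}}c(X)$ and a genuine group insertion, for which the natural choice is the Bismut--Goette deformation studied in \cite{BG}. The fundamental transgression identity for the superconnection heat semigroup then expresses
\[
\partial_u \Tr_s\!\left[N\,\exp(-\mathbb{B}_{u,t}^2)\right] = d_X\bigl(\text{local form}\bigr),
\]
where $d_X=d-2\pi i\iota_X$. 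Integrating over $u\in[0,1]$ and then applying the Mellin transform and $\partial_s|_{s=0}$ produces the difference $T_{e^{tX}}-T_{{\rm id},tX}$ as a single integral in which the building block $d_t$ of equation \refeq{dt} appears, after recognizing the local form in the transgression as $\frac{\omega^{TM}}{2\pi t}\exp(d_X\XM^\flat/(4\pi i t))$ multiplied by the characteristic data $\Td_{tX}(\mtr{TM})\ch_{tX}(\mtr{\cal O(\ell)})$.

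Second, I would evaluate the resulting $t$-integral using the defining construction of $S_X$: splitting it as $\int_0^1+\int_1^\infty$, subtracting the small-$t$ singular terms produced by localization at the isolated fixed points of $X$ on ${\bf P}^1\BC$, and comparing with the formula preceding \refeq{3.6BG}. The non-localized part collects to $\int_M \Td_{tX}\ch_{tX}\cdot S_{tX}(M,-\omega^{TM})$, while the fixed-point contributions assemble into $\int_{M_X}\Td_{e^{tX}}(TM)\ch_{e^{tX}}({\cal O}(\ell))\cdot I_{tX}(N_{M_X})$; here the class $I_{tX}$ is exactly the combination of $\log(1+\theta'/(2\pi k))$-terms that arises from comparing the exponentiated $\Td_{e^{tX}}$ (with its infinite product of eigenvalue shifts over $k\in\BZ$) against the purely polynomial $\Td_{tX}$ and the Mellin-boundary terms of the localization.

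The principal obstacle is analytic rather than algebraic: one must justify that the transgression formula, the Mellin regularizations defining both $T_{e^{tX}}$ and $T_{{\rm id},tX}$, and the interchange of $\partial_s|_{s=0}$ with the $u$- and $t$-integrations are all valid simultaneously, in particular controlling the small-$t$ asymptotics uniformly in $u$. For isolated fixed points on ${\bf P}^1\BC$, where the normal $X$-action is invertible, this is tractable by the local model analysis developed in the preceding Section~5 (see Lemma \ref{BGVerify} and Proposition \ref{limitsFora}), which is precisely the ingredient that lets one identify the singular part of the heat kernel near the poles with the localization contribution and extract $I_{tX}$ cleanly. Thus the specialization to ${\bf P}^1\BC$ reduces to checking that the general Bismut--Goette mechanism is compatible with the explicit current formula established in Theorem \ref{SKasIntegral}.
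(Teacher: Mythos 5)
This theorem is not proven in the paper: it is stated as a direct specialization of Bismut--Goette's Theorem~0.1 (the citation marker \texttt{(\textbackslash cite[Th.~0.1]\{BG\})} in the theorem header signals a quoted result). All the paper does --- and all it needs to do --- is plug $M={\bf P}^1\BC$, $E={\cal O}(\ell)$, and the Fubini--Study K\"ahler structure into the general Bismut--Goette comparison formula relating $T_{g,X}$ to $T_{ge^X}$ via the current $S_X$ and the class $I_X$. Your proposal, by contrast, tries to reconstruct the proof of the general theorem itself, which is a roughly 130-page paper and not something this article reproves.

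Beyond the mismatch in scope, some details of your reconstruction are off. The two-parameter interpolating superconnection $\mathbb{B}_{u,t}$ joining the $g$-inserted heat operator to the Clifford-deformed one is your invention; Bismut--Goette do not organize the proof that way. More seriously, the logical direction of your final paragraph is reversed: you present Lemma~\ref{BGVerify}, Proposition~\ref{limitsFora}, and Theorem~\ref{SKasIntegral} as ingredients needed to ``justify'' the transgression identity and the Mellin interchanges in the Bismut--Goette mechanism. In the paper those results sit on the other side of the ledger: they are independent local computations of $S_{tX}$ on ${\bf P}^1\BC$, derived from the definition of $S_X$, whose purpose is to \emph{evaluate} the right-hand side of the already-established Theorem~\ref{BGmain} (in Proposition~\ref{TdchS_P1} and Theorem~\ref{g-eq.Torsion}), not to establish it. So while your sketch names the right qualitative ingredients (heat kernels, transgression, Mellin regularization, localization near the two fixed points), it would not, even filled in, give the proof this paper relies on --- the paper relies on \cite{BG} wholesale.
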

With the angle $t\theta$ of the operation of $g=e^{tX}$ on $T_pM$ at the fixed point $p$, $t\neq0$, we get
$$
\left.\Td_{e^{tX}}(TM)\ch_{e^{tX}}(E)I_{tX}(TM)\right\rvert_{p}
=\frac{\Tr g^E}{\det(1-(g^{TM})^{-1})}\sum_{k\in\BZ\setminus\{0\}\atop\theta}\frac{\log(1+\frac{t\theta}{2\pi k})}{it\theta+2k\pi i}.
$$
By \cite[(20) (appendix)]{Bi8add}, for $0<|t\theta|<2\pi$ the last term $I_{tX}({\bf P}^1\BC)$ equals
$$
\sum_{k\in\BZ\setminus\{0\}}\frac{\log(1+\frac{t\theta}{2\pi k})}{it\theta+2k\pi i}
=\sum_{m\geq1\atop m\rm\,odd}{\cal H}_m\frac{\zeta(-m)(it\theta)^m}{m!}
$$
using the harmonic numbers as given in eq. (\ref{harmonic}). For $M={\bf P}^1\BC$, $T{\bf P}^1\BC={\cal O}(2)$, $\theta^{{\cal O}(1)}=\pm1/2$ at the fixed points we get for the second summand on the right hand side in Theorem \ref{BGmain}
\begin{multline}\label{IclassP1C}
\int_{M_{X}}\Td_{e^{tX}}(TM)\ch_{e^{tX}}({\cal O}(\ell))I_{tX}(TM)
\\=\sum_p\frac{e^{\pm it\ell/2}}{1-e^{\mp it}}\sum_{m\geq1\atop m\rm\,odd}{\cal H}_m\frac{\zeta(-m)(\pm it)^m}{m!}
=\frac{\cos\frac{(\ell+1)t}2}{i\sin\frac{t}2}\sum_{m\geq1\atop m\rm\,odd}{\cal H}_m\frac{\zeta(-m)(it)^m}{m!}
.
\end{multline}
\begin{prop}\label{TdchS_P1}
For $M={\bf P}^1\BC$, $t\neq0$, the first summand on the right hand side in Theorem \ref{BGmain} is given by
\begin{eqnarray*}\lefteqn{
\int_M\Td_{tX}(\mtr{TM})\ch_{tX}(\mtr{{\cal O}(\ell)}) S_{tX}(M,-\omega^{TM})}
\\&=&
\int_{-1}^{1}
\left(
\frac{ r \cos\frac{(\ell+1)tr}2}{\sin\frac{tr}2}
-\frac{\cos\frac{(\ell+1)t}2}{\sin\frac{t}2}
\right)\cdot\frac{dr}{t(1-r^2)}
+(\log t^2-2\Gamma'(1))\cdot \frac{\cos\frac{(\ell+1)t}2}{t\sin\frac{t}2}
\\&=&-\left(
\frac{\cos\frac{(\ell+1)t}2)}{t\sin\frac{t}2}
\right)^*+(\log t^2-2\Gamma'(1))\cdot \frac{\cos\frac{(\ell+1)t}2}{t\sin\frac{t}2}.
\end{eqnarray*}
\end{prop}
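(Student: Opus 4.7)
The plan is to apply Theorem \ref{SKwithStar} (equivalently Theorem \ref{SKasIntegral}) directly to the integrand, after a short algebraic simplification and a reduction to the scalar part of the integrand. Writing $y:=c_1({\cal O}(1))=\omega^{TM}/(2\pi)$ and using $TM\cong{\cal O}(2)$ together with the moment-map data $m^{TM}=i\sin u$, $m^{{\cal O}(\ell)}=\tfrac{i\ell}{2}\sin u$ recalled at the start of the section, the shifted curvatures satisfy $-\Omega^{TM}/(2\pi i)+tm^{TM}=2y+it\sin u=:z$ and $-\Omega^{{\cal O}(\ell)}/(2\pi i)+tm^{{\cal O}(\ell)}=\ell z/2$. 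Hence
\[
\Td_{tX}(\mtr{TM})\,\ch_{tX}(\mtr{{\cal O}(\ell)})=\Td(z)\,e^{\ell z/2}=\frac{z\,e^{(\ell+1)z/2}}{2\sinh(z/2)}=:F(z).
\]

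Since ${\bf P}^1\BC$ has real dimension two, $y\wedge y=0$, so a one-step Taylor expansion yields $F(z)=F(it\sin u)+2yF'(it\sin u)$. On the other hand the form $d_t$ of \refeq{dt} collapses on ${\bf P}^1\BC$ to the pure top-degree expression $\frac{\omega^{TM}}{2\pi t}e^{-\|X\|^2/(2t)}$, since any higher-order power of $\nu=dX^\flat/(4\pi i)$ would wedge with $\omega^{TM}$ to give a $4$-form on the real surface. Consequently $S_{tX}(M,-\omega^{TM})$ has no $(0,0)$-component, the $2yF'(it\sin u)$ summand contributes nothing, and it suffices to pair the scalar $F(it\sin u)$ with $S_{tX}$.

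To match the format of Theorem \ref{SKwithStar}, I set $\eta(u,v):=F(i\sin u)$, so that the warped function is exactly $\eta_t(u,v)=F(i\sin(\arcsin(t\sin u)))=F(it\sin u)$. Using $\sinh(ix)=i\sin x$ and $\cosh(ix)=\cos x$, the required symmetrisation is
\[
\tilde g(r)=\tfrac12\bigl(F(ir)+F(-ir)\bigr)=\frac{r\cos((\ell+1)r/2)}{2\sin(r/2)},
\]
which is analytic at $r=0$ with radius of convergence $2\pi>1$. Inserting $\tfrac{2\tilde g(t)}{t^2}=\tfrac{\cos((\ell+1)t/2)}{t\sin(t/2)}$ and $\tfrac{2\tilde g(tr)}{t^2}=\tfrac{r\cos((\ell+1)tr/2)}{t\sin(tr/2)}$ into parts (1) and (2) of Theorem \ref{SKwithStar} produces the integral expression and the starred expression claimed in the Proposition, respectively.

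The one slightly delicate point is the reduction to the scalar part, i.e.\ verifying that $S_{tX}$ carries no $(0,0)$-component on a Riemann surface; everything else is algebraic bookkeeping forced once the collapse $\Td(z)\,e^{\ell z/2}=F(z)$ is recognised through the substitution $z=2y+it\sin u$, and is then directly fed into the machinery already built in Theorem \ref{SKwithStar}.
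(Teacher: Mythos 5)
Your proposal is correct and follows essentially the same route as the paper: the paper also reduces to the degree-zero part of $\eta$, observes that on $\mathbf{P}^1\mathbb{C}$ the current $S_{tX}$ only pairs nontrivially with $\eta^{[0]}$ because $d_t$ is of pure degree $2$, and then applies Theorem~\ref{SKasIntegral} with $\tilde g(t)=\frac{t\cos((\ell+1)t/2)}{2\sin(t/2)}$. Your explicit Taylor expansion $F(z)=F(it\sin u)+2yF'(it\sin u)$ and the identification of which underlying $\eta$ produces the warped $\eta_t=\Td_{tX}\,\ch_{tX}$ are just a slightly more pedantic version of what the paper does when it says to read off $\eta^{[0]}$; one minor phrasing caveat is that the cleaner statement is that $S_{tX}$ is a degree-$2$ (dimension-$0$) current in this case, so the pairing $\int_M\eta S_{tX}$ depends only on $\eta^{[0]}$, rather than saying it ``has no $(0,0)$-component'', but the conclusion drawn is the correct one.
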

\begin{proof}
Setting $\mua:=m^{{\cal O}(1)}$ the $X$-equivariant classes are given by
\begin{eqnarray*}
\eta:=\Td_{tX}(TM)\ch_{tX}({\cal O}(\ell))
&=&
\frac{2t\mua}{1-e^{-2t\mua}}e^{t\ell\mua}+\mbox{terms of higher degree}
.
\end{eqnarray*}
Thus we get
$$\left.\Td_{tX}(TM)\ch_{tX}({\cal O}(\ell))(c_{\top,tX}^{-1})'(\mtr N)\right\rvert_{p}
=\frac{1}{t^2}\frac{\pm it e^{\pm it\ell/2}}{1-e^{\mp it}}$$
and henceforth
$$
\int_{M_{X}}\Td_{tX}(TM)\ch_{tX}({\cal O}(\ell))
(c_{\top,tX}^{-1})'(\mtr N)
=\frac{\cos\frac{(\ell+1)t}2}{t\sin \frac{t}2}
.
$$
Remember that by its definition via the integral $\int_M\eta\wedge d_t$, where $d_t$ as in eq. (\ref{dt}) is a form of degree 2 and higher, in this complex-1-dimensional case $\int\eta S_{tX}(M,-\omega^{TM})$ only depends on $\eta^{[0]}$ for any form $\eta$.
Hence the result follows by applying Theorem \ref{SKasIntegral} with $\tilde g(t)=\frac{t \cos(\frac{(\ell+1)t}2)}{2\sin(\frac{t}2)}$.
\end{proof}
\begin{theor}\label{g-eq.Torsion}
For $0<t<2\pi$, the value in Proposition \ref{TdchS_P1} has for $\ell\to+\infty$ an asymptotic expansion given by
\begin{multline*}
\int_M\Td_{tX}(\mtr{TM})\ch_{tX}(\mtr{{\cal O}(\ell)}) S_{tX}(M,-\omega^{TM})
\\=
\frac{
-\cos\frac{(\ell+1)t}2}
{t\sin\frac{t}2}\log(\ell+1)
+
\frac{\sin\frac{(\ell+1)t}2\cdot \frac\pi2
-\cos\frac{(\ell+1)t}2\cdot\left(\Gamma'(1)-\log t
\right)}
{t\sin\frac{t}2}
+O(\frac1\ell).
\end{multline*}
\end{theor}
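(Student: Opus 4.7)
The starting point will be Proposition~\ref{TdchS_P1}, which reduces the claim to extracting the $\ell\to+\infty$ asymptotics of
$$
I(\ell):=\int_{-1}^{1}\left(\frac{r\cos\frac{(\ell+1)tr}{2}}{\sin\frac{tr}{2}}-\frac{\cos\frac{(\ell+1)t}{2}}{\sin\frac{t}{2}}\right)\frac{dr}{t(1-r^2)},
$$
since the complementary summand $(\log t^2-2\Gamma'(1))\cos\frac{(\ell+1)t}{2}/(t\sin\frac{t}{2})$ is already in closed form. I set $N:=\ell+1$, $a:=Nt/2$, and $\phi(r):=r/\sin(tr/2)$, noting that $\phi$ is even and smooth on $[-1,1]$ with $\phi(\pm1)=1/\sin(t/2)$. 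The plan is to split the integrand as
$$
\phi(r)\cos\tfrac{Ntr}{2}-\phi(1)\cos\tfrac{Nt}{2}=\bigl(\phi(r)-\phi(1)\bigr)\cos\tfrac{Ntr}{2}+\phi(1)\bigl(\cos\tfrac{Ntr}{2}-\cos\tfrac{Nt}{2}\bigr),
$$
thereby isolating a smooth remainder from a principal oscillatory piece.

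First I would handle the regular remainder: because $\phi$ is even, $\phi(r)-\phi(1)$ vanishes at both endpoints $r=\pm1$, so $(\phi(r)-\phi(1))/(1-r^2)$ extends smoothly to $[-1,1]$. Thus the first summand is a Fourier integral of a smooth function against $\cos(Ntr/2)$, and one integration by parts gives $O(1/\ell)$.

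Next I would evaluate the principal oscillatory piece. Using $\frac{1}{1-r^2}=\tfrac12\bigl(\frac{1}{1-r}+\frac{1}{1+r}\bigr)$ together with the $r\mapsto-r$ symmetry of $\cos\frac{Ntr}{2}-\cos\frac{Nt}{2}$, the two halves collapse to a single one-sided integral. Substituting $s=1-r$ and applying the addition formula to $\cos(a-as)$ gives
$$
\phi(1)\int_{-1}^{1}\frac{\cos\frac{Ntr}{2}-\cos\frac{Nt}{2}}{t(1-r^2)}\,dr=\frac{1}{t\sin(t/2)}\left(\cos a\int_0^{2a}\frac{\cos u-1}{u}\,du+\sin a\int_0^{2a}\frac{\sin u}{u}\,du\right).
$$
I would then invoke the classical asymptotics $\int_0^{2a}\sin u/u\,du=\pi/2+O(1/a)$ and, via the identity $\mathrm{Cin}(x)=\log x+\gamma-\mathrm{Ci}(x)$ with $\gamma=-\Gamma'(1)$ and $\mathrm{Ci}(x)=O(1/x)$, $\int_0^{2a}(\cos u-1)/u\,du=-\log(2a)+\Gamma'(1)+O(1/a)$. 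Using $\log(2a)=\log(\ell+1)+\log t$, this yields
$$
I(\ell)=\frac{-\cos\frac{Nt}{2}\bigl(\log(\ell+1)+\log t-\Gamma'(1)\bigr)+\tfrac{\pi}{2}\sin\frac{Nt}{2}}{t\sin(t/2)}+O(1/\ell).
$$

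Finally, adding back the explicit summand $(2\log t-2\Gamma'(1))\cos\frac{Nt}{2}/(t\sin(t/2))$ from Proposition~\ref{TdchS_P1}, the coefficients of $\cos\frac{Nt}{2}$ recombine as $-\log t+2\log t=\log t$ and $\Gamma'(1)-2\Gamma'(1)=-\Gamma'(1)$, producing exactly the expression stated in the theorem. The main obstacle is the nonstandard asymptotic expansion of $\int_0^{2a}(\cos u-1)/u\,du$, which is where both the $\log(\ell+1)$ term and the Euler-Mascheroni contribution $-\Gamma'(1)$ enter; everything else is trigonometric bookkeeping and a single use of Riemann-Lebesgue.
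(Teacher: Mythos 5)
Your proposal is correct and follows essentially the same route as the paper: the same decomposition of the integrand into the smooth piece $(\phi(r)-\phi(1))\cos\frac{Ntr}{2}$ (killed by a single integration by parts) and the oscillatory piece $\phi(1)(\cos\frac{Ntr}{2}-\cos\frac{Nt}{2})$, followed by partial fractions, the cosine addition formula, and the classical asymptotics of the sine and cosine integrals. The only cosmetic difference is that you route the cosine-integral asymptotics through $\mathrm{Cin}$ rather than writing the exact identity in terms of $\mathrm{Ci}$ as the paper does; the bookkeeping of the $\log t$, $\log(\ell+1)$, and $\Gamma'(1)$ contributions then matches the paper's.
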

Note that $\log(\ell+1)=\log\ell+O(\frac1\ell)$.
\begin{proof}We decompose the integral in Proposition \ref{TdchS_P1} as
\begin{multline*}
\int_{-1}^{1}
\left(
\frac{ r \cos\frac{(\ell+1)tr}2}{\sin\frac{tr}2}
-\frac{\cos\frac{(\ell+1)t}2}{\sin\frac{t}2}
\right)\cdot\frac{dr}{t(1-r^2)}
\\=
\int_{-1}^{1}
\frac{\cos\frac{(\ell+1)tr}2-\cos\frac{(\ell+1)t}2}
{(1-r^2)t\sin\frac{t}2}\,dr
+
\int_{-1}^{1}
\left(
\frac{r}{\sin\frac{tr}2}-\frac{1}{\sin\frac{t}2}
\right)\cdot\frac{\cos\frac{(\ell+1)tr}2}{t(1-r^2)}\,dr.
\end{multline*}
For $|t|<2\pi$, the factor $f(r):=\left(
\frac{r}{\sin\frac{tr}2}-\frac{1}{\sin\frac{t}2}
\right)\cdot\frac{1}{t(1-r^2)}$ in the second integral on the right hand side is smooth on $r\in[-1,1]$. Partial integration shows
\begin{multline*}
\int_{-1}^{1}
\left(
\frac{r}{\sin\frac{tr}2}-\frac{1}{\sin\frac{t}2}
\right)\cdot\frac{\cos\frac{(\ell+1)tr}2}{t(1-r^2)}\,dr
\\=\frac2{(\ell+1)t}\sin\frac{(\ell+1)tr}2\cdot f(r)\Big|_{-1}^1
-\frac2{(\ell+1)t}\int_{-1}^1\sin\frac{(\ell+1)tr}2\cdot f'(r)\,dr
=O(1/\ell).
\end{multline*}
To estimate the first integral on the right hand side, we represent it as twice the integral over $[0,1]$, decompose $\frac1{1-r^2}=\frac{1/2}{1-r}+\frac{1/2}{1+r}$ and use the trigonometric addition formula for $\cos\frac{(\ell+1)tr}2=\cos\left(\frac{(\ell+1)t(r-1)}2+\frac{(\ell+1)t}2\right)$.  Let ${\rm Si}$ and ${\rm Ci}$ denote the sine and cosine integral functions, respectively, which are given by ${\rm Si}(x)=\int_0^x\frac{\sin t}{t}\,dt$ and ${\rm Ci}(x)=-\int_x^{+\infty}\frac{\cos t}{t}\,dt$ for $x\in\BR^+$. Then the above integral equals
\begin{eqnarray*}
\lefteqn{
\int_{-1}^{1}
\frac{\cos\frac{(\ell+1)tr}2-\cos\frac{(\ell+1)t}2}
{(1-r^2)t\sin\frac{t}2}\,dr
}\\&=&
\frac{\sin\frac{(\ell+1)t}2\cdot {\rm Si}((\ell+1)t)
-\cos\frac{(\ell+1)t}2\cdot\left(-\Gamma'(1)-{\rm Ci}((\ell+1)t)+\log((\ell+1)t)
\right)}
{t\sin\frac{t}2}
\\&=&\frac{\sin\frac{(\ell+1)t}2\cdot \frac\pi2
-\cos\frac{(\ell+1)t}2\cdot\left(-\Gamma'(1)+\log((\ell+1)t)
\right)}
{t\sin\frac{t}2}+O(\frac1\ell).
\end{eqnarray*}
Adding the term $(\log t^2-2\Gamma'(1))\cdot \frac{\cos\frac{(\ell+1)t}2}{t\sin\frac{t}2}$ from Proposition \ref{TdchS_P1}, one obtains the result.
\end{proof}
By iterating the partial integration, one can extend this expansion to arbitrary negative powers of $\ell+1$.

\begin{theor}\label{gEquivTorsion}
With respect to the action of the vector field $X\in\Gamma({\bf P}^1\BC,T{\bf P}^1\BC)$, the $X$-equivariant torsion is given by
\begin{eqnarray*}
T_{{\rm id},tX}({\bf P}^1\BC,\mtr{{\cal O}(\ell)})&=&
-\frac{\cos\frac{(\ell+1)t}2}{\sin\frac{t}2}\sum_{m\geq1\atop m{\rm\,odd}}\left(2\zeta'(-m)
+{\cal H}_m\zeta(-m)\right)\frac{(-1)^{\frac{m+1}2}t^m}{m!}
\\&&+\sum_{m=1}^{|\ell+1|}\frac{\sin(2m-|\ell+1|)\frac{t}2}{\sin\frac{t}2}\log m
+\left(
\frac{\cos\frac{(\ell+1)t}2)}{t\sin\frac{t}2}
\right)^*
\end{eqnarray*}
where $(t^{2m})^*:=t^{2m}\cdot\left\{
{2{\cal H}_{2m+1}-{\cal H}_{m}\atop0}
\mbox{ if }{m\geq0\atop m=-1}
\right.$ (as in Theorem \ref{SKwithStar}) and ${\cal H}_{m}$ is the harmonic number as in eq. (\ref{harmonic}).
\end{theor}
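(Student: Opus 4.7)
The plan is to invert Bismut–Goette's identity (Theorem~\ref{BGmain}) and substitute the three constituent pieces. Rearranging gives
\[
T_{{\rm id},tX}({\bf P}^1\BC,\mtr{{\cal O}(\ell)}) = T_{e^{tX}}({\bf P}^1\BC,\mtr{{\cal O}(\ell)}) - \int_{{\bf P}^1\BC} \Td_{tX}\ch_{tX} S_{tX} + \int_{({\bf P}^1\BC)_X} \Td_{e^{tX}}\ch_{e^{tX}} I_{tX},
\]
and I would evaluate the two integrals on the right using results already in hand. The first is handled by Proposition~\ref{TdchS_P1}; with the minus sign above it contributes
\[
+\left(\frac{\cos\frac{(\ell+1)t}{2}}{t\sin\frac{t}{2}}\right)^{*} - (\log t^2 - 2\Gamma'(1))\frac{\cos\frac{(\ell+1)t}{2}}{t\sin\frac{t}{2}},
\]
producing the starred summand of the stated formula and leaving an anomalous $\log t^2$ remainder. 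The second integral is given by equation~\eqref{IclassP1C}; applying the identity $(it)^m/i = -(-1)^{(m+1)/2}t^m$ valid for odd $m$ reshapes it into
\[
-\frac{\cos\frac{(\ell+1)t}{2}}{\sin\frac{t}{2}}\sum_{m\geq 1,\;m\,\odd}{\cal H}_m\zeta(-m)\frac{(-1)^{(m+1)/2}t^m}{m!},
\]
matching the ${\cal H}_m\zeta(-m)$ half of the first line of the claim.

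The heart of the argument is a direct spectral computation of the $g$-equivariant Ray–Singer torsion $T_{e^{tX}}({\bf P}^1\BC,\mtr{{\cal O}(\ell)})$. Using Borel–Weil together with Frobenius reciprocity, the $L^2$-sections of ${\cal O}(\ell)$ decompose under $SU(2)$ (for $\ell\geq 0$) as $\bigoplus_{k\geq 0} V_{\ell+2k}$, with Kodaira Laplacian eigenvalue $k(k+\ell+1)$ on the $k$-th summand (a short Casimir computation) and Weyl character $\chi_{V_n}(e^{tX}) = \sin\frac{(n+1)t}{2}/\sin\frac{t}{2}$; on $(0,1)$-forms the nonzero spectrum coincides via $\bar\partial$-conjugation, yielding an equal contribution, and the case $\ell<0$ is reduced to $\ell\geq 0$ by Serre duality, explaining the appearance of $|\ell+1|$ in the final formula. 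The equivariant spectral zeta function therefore reduces, up to normalisation, to a sum over $k\geq 1$ of $(k(k+|\ell|+1))^{-s}\,\sin\frac{(|\ell|+2k+1)t}{2}/\sin\frac{t}{2}$. I would expand $(k(k+|\ell|+1))^{-s} = k^{-2s}(1+(|\ell|+1)/k)^{-s}$ as a power series in $k^{-1}$, apply the trigonometric addition formula to split $\sin\frac{(|\ell|+2k+1)t}{2}$ into terms proportional to $\sin(kt)\cos\frac{(|\ell|+1)t}{2}$ and $\cos(kt)\sin\frac{(|\ell|+1)t}{2}$, isolate by an index shift the finite Weyl piece $\sum_{m=1}^{|\ell+1|}\frac{\sin(2m-|\ell+1|)\frac{t}{2}}{\sin\frac{t}{2}}\log m$, and meromorphically continue the divergent tails using standard identities for $\sum_k k^{-2s}e^{\pm ikt}$ which evaluate at $s=0$ in terms of $\zeta(-m)$, $\zeta'(-m)$, and $\log t^2$. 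The outcome for $T_{e^{tX}}$ will contain the finite Weyl sum, a $-\frac{\cos\frac{(\ell+1)t}{2}}{\sin\frac{t}{2}}\sum 2\zeta'(-m)\frac{(-1)^{(m+1)/2}t^m}{m!}$ series, and precisely the anomalous $(\log t^2 - 2\Gamma'(1))\frac{\cos\frac{(\ell+1)t}{2}}{t\sin\frac{t}{2}}$ term needed for the subsequent cancellation.

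Summing the three contributions, the $(\log t^2 - 2\Gamma'(1))$ anomalies cancel between $T_{e^{tX}}$ and $-\int\Td_{tX}\ch_{tX}S_{tX}$; the $\zeta'(-m)$ and ${\cal H}_m\zeta(-m)$ series combine into the single sum $-\frac{\cos\frac{(\ell+1)t}{2}}{\sin\frac{t}{2}}\sum(2\zeta'(-m)+{\cal H}_m\zeta(-m))\frac{(-1)^{(m+1)/2}t^m}{m!}$; and the finite Weyl sum together with the starred summand remain, which is exactly the claimed formula. The main obstacle is the bookkeeping in the spectral computation of $T_{e^{tX}}$: correctly isolating the finite Weyl sum from the meromorphic continuation, verifying that the $\log t^2$ coefficient produced by the zeta derivative is exactly the one needed to cancel the Proposition~\ref{TdchS_P1} anomaly, and matching the prefactor $-\cos\frac{(\ell+1)t}{2}/\sin\frac{t}{2}$ with the sign pattern $(-1)^{(m+1)/2}$ of the $2\zeta'(-m)$ series so that it merges cleanly with the $I$-class contribution.
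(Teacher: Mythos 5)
Your proposal is correct and follows essentially the same route as the paper: invert Theorem \ref{BGmain}, insert Proposition \ref{TdchS_P1} for the $S$-integral and eq.~(\ref{IclassP1C}) for the $I$-class, and cancel the $(\log t^2-2\Gamma'(1))$ anomalies against the matching terms in $T_{e^{tX}}$. The only divergence is that the paper simply cites \cite[Th.~2]{K1} (eq.~(\ref{eqTorsionP1}) with $R^{\rm rot}$ from eq.~(\ref{RrotDef})) for the closed form of $T_{e^{tX}}({\bf P}^1\BC,\mtr{{\cal O}(\ell)})$, whereas you propose to rederive it by a Borel--Weil/Casimir spectral computation -- which is precisely how that cited formula is established, so the two arguments coincide once the reference is unrolled.
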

The first summand contains exactly the function defining the (non-equivariant) Gillet-Soul\'e $R$-class \cite[p. 160]{Soule},
\begin{equation}
R({\cal L})=\sum_{m\geq1\atop m{\rm\,odd}}\left(2\zeta'(-m)
+{\cal H}_m\zeta(-m)\right)\frac{c_1({\cal L})^m}{m!}
\end{equation}
(see Th. \ref{CompareARR1} for a closer analysis). The $\zeta'$-term as well as the equivariant-metric-terms are derived from the equivariant torsion. The $*$-summand originates from the equivariant Bott-Chern current and the ${\cal H}_m\zeta(-m)$-term is the $I$-class. Some terms from the first two summands cancel each other.
\begin{proof}
\cite[Th. 2]{K1} shows for $t\in]0,2\pi[$,
\begin{eqnarray}\label{eqTorsionP1}
T_{e^{tX}}({\bf P}^1\BC,\mtr{{\cal O}(\ell)})&=&
2R^{\rm rot}(t)\frac{\cos\frac{(\ell+1)t}2}{\sin\frac{t}2}
+\sum_{m=1}^{|\ell+1|}\frac{\sin(2m-|\ell+1|)\frac{t}2}{\sin\frac{t}2}\log m
\end{eqnarray}
where according to \cite[Prop. 1]{K1},
\begin{eqnarray}\label{RrotDef}
R^{\rm rot}(t)=\frac{-\Gamma'(1)
+\log t}{t}-\sum_{m\geq1\atop m{\rm\,odd}}\zeta'(-m)(-1)^{\frac{m+1}2}\frac{t^m}{m!}.
\end{eqnarray}
Combining this with Bismut-Goette's Theorem \ref{BGmain}, Proposition \ref{TdchS_P1} and equation \ref{TdchS_P1} we find
\begin{align*}
T_{{\rm id},tX}({\bf P}^1\BC,\mtr{{\cal O}(\ell)})=&
2\left(
\frac{-\Gamma'(1)
+\log t}{t}-\sum_{m\geq1\atop m{\rm\,odd}}\zeta'(-m)(-1)^{\frac{m+1}2}\frac{t^m}{m!}
\right)\frac{\cos\frac{(\ell+1)t}2}{\sin\frac{t}2}
\\&+\sum_{m=1}^{|\ell+1|}\frac{\sin(2m-|\ell+1|)\frac{t}2}{\sin\frac{t}2}\log m
-(\log t^2-2\Gamma'(1))\cdot \frac{\cos\frac{(\ell+1)t}2}{t\sin\frac{t}2}
\\&+\left(
\frac{\cos\frac{(\ell+1)t}2)}{t\sin\frac{t}2}
\right)^*
+
\frac{\cos\frac{(\ell+1)t}2}{i\sin\frac{t}2}\sum_{m\geq1\atop m\rm\,odd}{\cal H}_m\frac{\zeta(-m)(it)^m}{m!}.
\end{align*}
\end{proof}

This sum does not contain a factor $\log t$ nor any negative powers of $t$ anymore. It is an even power series in $t$. 
The expansion in $t$ up to $O(t^4)$ is given by
\begin{eqnarray}\nonumber
\lefteqn{T_{{\rm id},tX}({\bf P}^1\BC,\mtr{{\cal O}(\ell)})}
\\&=&\nonumber
4\zeta'(-1)
+\sum_{m=1}^{|\ell+1|}(2m-|\ell+1|)\log m
-\frac{|\ell+1|^2}2
\\&&+\Bigg(\frac{10|1+\ell |^4-5|1+\ell |^2-4}{720}
\nonumber+\frac{-4 \zeta'(-3) - (|1+\ell|^2-1|) \zeta'(-1)}6
\\&&+
\sum_{m=1}^{|\ell+1}\frac{ (|\ell+1| - 2 m)^3 -(|\ell+1| - 2 m)}{24}\log m
\Bigg)\cdot t^2+O(t^4).
\end{eqnarray}

\begin{Rem}\rm\label{vgl.bekannteTorsion}
We shall verify that the value of $T_{{\rm id},tX}({\bf P}^1\BC,\mtr{{\cal O}(\ell)})$ for $t=0$ equals the known formula for $T({\bf P}^1\BC,\mtr{{\cal O}(\ell)})$:

The equivariant torsion has been computed in \cite[Theorem 18]{K2} for equivariant vector bundles on symmetric spaces. We shall use the notations $\bz,\bzs,\chi^*$ etc. from \cite[p. 102]{K2}:
For $\phi\in{\bf R}$ and ${\rm Re}\,s>1$, consider the Lerch zeta function
\begin{equation}\label{DefZetaL}
\zeta_L(s,\phi)=\sum_{k=1}^\infty \frac{e^{ik\phi}}{k^s}.
\end{equation}
For $\vp$ fixed, the function $\zeta_L$ has analytic continuation in the variable $s$ to $\BC\setminus\{1\}$. Set $\zeta'_L(s,\phi):=\partial/\partial s
(\zeta_L(s,\phi))$. Let
$P:{\bf Z}\to\BC$ be a function of the form
\begin{equation*} P(k)=\sum_{j=0}^m c_jk^{n_j} e^{ik\phi_j}\label{sternnn}
\end{equation*} with $m\in{\bf N}_0$, $n_j\in{\bf N}_0$, $c_j\in\BC$,
$\phi_j\in{\bf R}$ for all $j$. Then for $p\in{\bf R}$ we shall use the notations $P^\odd(k):=(P(k)-P(-k))/2$,
\begin{eqnarray*}
\bz P&:=\sum_{j=0}^m c_j\zeta_L(-n_j,\phi_j),\qquad\qquad\qquad\quad
\bzs P&:=\sum_{j=0}^m c_j\zeta_L'(-n_j,\phi_j),\\
\bzo P&:=\sum_{j=0}^m c_j\zeta_L(-n_j,\phi_j)\sum_{\ell=1}^{n_j}\frac1
\ell,\quad
\mbox{Res }P(p)&:=\sum_{j=0\atop\phi_j\equiv 0{\rm\ mod }2\pi}^m c_j
\frac{p^{n_j+1}}{2(n_j+1)}
\end{eqnarray*}
and 
$$P^*(p):=-\sum_{j=0\atop\phi_j\equiv 0{\rm\ mod\,
}2\pi}^m c_j \frac{p^{n_j+1}}{4(n_j+1)} \sum_{\ell=1}^{n_j}\frac 1 \ell.$$

For ${\bf P}^1\BC={\bf U}(2)/{\bf U}(1)\x{\bf U}(1)=:G/K$ identify the Lie algebra of the maximal torus with $\BR^2$ with the ordering $(e_1,e_2)$. Then the positive roots are given by $\Delta^+=\psi=\{e_1-e_2\}$, and the weight providing ${\cal O}(\ell)$ is given by $\Lambda=-\ell\cdot e_2$. Thus for $\alpha=e_1-e_2$, $\rho_G=\frac\alpha2$ we get the dimension $\chi_{\rho_G+\Lambda+k\alpha}(0)=\frac{\<\alpha,\rho_G+\ell\cdot e_1+k\alpha\>}{\<\alpha,\rho_G\>}=1+\ell+2k$. Furthermore $(\alpha,\rho_G+\Lambda)=\frac{2\<\alpha,\rho_G+\ell\cdot e_1\>}{\<\alpha,\alpha\>}=1+\ell.$ The formula in \cite[Theorem 18]{K2} requires the highest weight $\Lambda$ of the bundle to be in the closure of the positive Weyl chamber. Thus for $\ell\geq0$, one obtains for the evaluation of the characters at the neutral element
\begin{eqnarray} \nonumber
T({\bf P}^1\BC,\mtr{{\cal O}(\ell)})&=&2\bzs
\sum_{\Psi}\chi_{\rho_G+\Lambda+k\a}^\odd
-2\sum_{\Psi}\chi_{\rho_G+\Lambda-k\a}^*\left((\a,\rho_G+\Lambda)\right)\nonumber\\
&&\nonumber{}-\sum_\Psi\sum_{k=1}^{(\a,\rho_G+\Lambda)}\chi_{\rho_G+\Lambda-k\a}\log
k-\sum_{\Psi} 
\bz\chi_{\rho_G+\Lambda+k\a}\log \frac{\|\a\|^2_\diamond}2\nonumber
\\&=&\nonumber4\zeta'(-1)-\frac{(\alpha,\rho_G+\Lambda)^2}2{\cal H}_1
-\sum_{k=1}^{\ell+1}(1+\ell-2k)\log k
\\&=&4\zeta'(-1)-\frac{(\ell+1)^2}2
-\sum_{k=1}^{\ell+1}(1+\ell-2k)\log k
\end{eqnarray}
for the choice $\frac{\|\a\|^2_\diamond}2=1$ as in \cite[(71)]{K2}.
\cite[Theorem 18]{K2} contained a mistyped sign in the third summand. A formula valid for arbitrary equivariant bundles (and without the typo) was given in \cite[Th. 5.2]{KK}, written slightly differently using $\bz P(k)=-\bz P(-k)-P(0)$. This result shows for any $\ell\in\BZ$ that
$
T({\bf P}^1\BC,\mtr{{\cal O}(\ell)})=4\zeta'(-1)-\frac{(\ell+1)^2}2
-\sum_{k=1}^{|\ell+1|}(|1+\ell|-2k)\log k$.
For arbitrary $X_0$ one gets an additional summand
\begin{align}\nonumber
-\bz\chi_{\rho+\ell\lambda+k\alpha}\log\frac{\|\alpha\|^2_\diamond}2&=-\bz(2k+\ell+1)\log\frac{\|\alpha\|^2_\diamond}2
\\&=-(2\cdot\frac{-1}{12}-\frac{\ell+1}2)\log\frac{\|\alpha\|^2_\diamond}2=(\frac23+\frac\ell2)\log\frac{\|\alpha\|^2_\diamond}2.
\end{align}
where $\frac2{\|\alpha\|_\diamond}=\frac1{\<\alpha,\rho_G\>_\diamond}=\vol_\diamond{\bf P}^1\BC$ by \cite[Cor. 7.27]{BeGeVe}. See also \cite[p. 840]{KMMW} for additional remarks.
\end{Rem}

\section{The torsion form}\label{FinalProof}
Let $P\to B$ be a ${\bf U}(2)$ principal bundle, ${\bf P}\to B$ the induced ${\bf P}^1\BC$-bundle and $E:=P\x_{{\bf U}(2)}\BC^2$. Then ${\bf P}={\bf P}(E)$. The curvature form $\Theta\in\Lambda^{1,1}T^*B\otimes (P\x_{{\bf U}(2)}{\frak u}(2))$ 
inserted in the torsion form as a $\BC$-valued homogeneous polynomial on $\frak u(2)$ provides an expression in terms of $c_1(E), c_2(E)$ via the fiber bundle embedding $P\x_{{\bf U}(2)}{\frak u}(2)\hookrightarrow P\x_{{\bf U}(2)}\End(\BC^2)=\End(E)$. In general \cite[(2.74)]{BG} shows for such bundles induced by principal bundles with compact structure group that the torsion form is a cohomology class.

\begin{Rem}\rm In general ${\bf P}^1\BC$-bundles can look more complicated; the relevant structure group is ${\bf PU}(2)={\bf SO}(3)$ and the obstruction is an element of $H^3(B,\BZ)$: The structure group of projective bundles is ${\bf PU}(k)={\bf U}(k)/{\bf U}(1)={\bf SU}(k)/(\BZ/k\BZ)$ (embedded diagonally). Thus one gets an obstruction $\alpha\in H^3(B,\BZ)$ with $k\alpha=0$ (\cite[p. 517]{Ivancevic}; \cite{AtA}; \cite{At}). See also \cite{Kot} for a more detailed discussion of the holomorphic situation.
\end{Rem}

\begin{proof} {\it (of Theorem \ref{mainTorsionformResult})}
Each $Y\in{\frak u}(2)$ induces a vector field on ${\bf P}^1\BC={\bf U}(2)/{\bf U}(1)\x{\bf U}(1)$, which we shall denote by $\rho(Y)$.
The Lie algebra element $\left({i/2\atop0}{0\atop -i/2}\right)\in\frak u(2)$ acts with period $\pi$ on ${\bf P}^1\BC$ and induces the vector field $X=\frac\partial{\partial v}$.
Thus the element $Y_0=\left({i\alpha\atop0}{0\atop i\beta}\right)\in\frak u(2)$ induces the vector field $(\alpha-\beta)\cdot X$. For any $Y\in\frak g$, $\gamma\in G$ the equality $T_{{\rm id},\rho({\Ad_\gamma Y})}({\bf P}^1\BC,\mtr{{\cal O}(\ell)})=T_{{\rm id},\rho(Y)}({\bf P}^1\BC,\mtr{{\cal O}(\ell)})$ holds, as both vector bundle and metric are $\gamma$-invariant. Thus $T_{{\rm id},tX}({\bf P}^1\BC,\mtr{{\cal O}(\ell)})$ determines $T_{{\rm id},\rho(Y)}({\bf P}^1\BC,\mtr{{\cal O}(\ell)})$ completely. Because of the $\Ad$-invariance of $\Tr,\det$ and $(\Tr Y_0)^2-4\det Y_0=-(\alpha-\beta)^2$,
\begin{equation}\label{TorsOhneFaktor}
T_{{\rm id},\rho(Y)}({\bf P}^1\BC,\mtr{{\cal O}(\ell)})=T_{iX\sqrt{(\Tr Y)^2-4\det Y}}({\bf P}^1\BC,\mtr{{\cal O}(\ell)}).
\end{equation}
Considering the determinant of the Euler sequence for the map $\pi:{\bf P}^1\BC\to$point
$$
0\to{\cal O}(-1)\to\pi^*\BC^2\to T{\bf P}^1\BC\otimes{\cal O}(-1)\to0
$$
one finds ${\cal O}(-2)\cong\pi^*\Lambda^2\BC^2\otimes T^*{\bf P}^1\BC$. As $e^{Y_0}\in{\bf U}(2)$ acts with weight $e^{i(\alpha+\beta)}$ on the pointwise trivial line bundle $\pi^*\Lambda^2\BC^2$, the action of $e^{Y_0}$ on ${\cal O}(\ell)$ is given by the action of the traceless component in $\frak{su}(2)$ composed with the pointwise factor $e^{-i\ell\frac{\alpha+\beta}2}$. Thus, when considering the torsion with respect to the action of the Lie algebra element $Y\in\frak g$ instead of the action of the vector field $\rho(Y)$, one gets the value (\ref{TorsOhneFaktor}) multiplied by $e^{-\frac\ell2\Tr Y}$.

According to \cite[(2.74)]{BG}, one obtains the torsion form $T_\pi(\mtr{{\cal O}(\ell)})$ by replacing $Y\in\frak g$ with $-\frac1{2\pi i}\Omega^E$. Thus in the value of $T_{{\rm id},tX}({\bf P}^1\BC,\mtr{{\cal O}(\ell)})$ given by Theorem \ref{gEquivTorsion}, $-t^2$ has to be replaced by $c_1(E)^2-4c_2(E)$, and the factor $e^{-\frac\ell2\Tr Y}$ gets replaced by $e^{-\frac\ell2c_1(E)}$.
\end{proof}
One can also verify quickly that the class $c_1(E)^2-4c_2(E)$ is invariant under $E\mapsto E\otimes{\cal L}'$ for every line bundle ${\cal L}'$. This verifies that it is indeed well-defined for ${\bf P}^1\BC$-bundles.
\section{Comparison with the arithmetic Grothendieck-Riemann-Roch Theorem}
Given a ${\bf P}^1\BC$-bundle $\pi:{\bf P}\to B$, we denote the vertical tangent space by $T\pi$.
\begin{prop}\label{liftCharClasses}
For any ${\bf P}^1\BC$-bundle $\pi:{\bf P}\to B$ one obtains
$$
\pi^*c_1(E)=c_1(T\pi)-2c_1({\cal O}(1))\quad\mbox{and}\quad\pi^*(c_1(E)^2-4c_2(E))=c_1(T\pi)^2.
$$
\end{prop}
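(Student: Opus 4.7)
The plan is to exploit the relative Euler sequence on ${\bf P} = {\bf P}(E)$, namely
$$
0\to{\cal O}(-1)\to\pi^*E\to T\pi\otimes{\cal O}(-1)\to0,
$$
which is the fibrewise analogue of the sequence already invoked earlier in the paper when taking the determinant to identify ${\cal O}(-2)$ with $\Lambda^2 E\otimes T^*{\bf P}^1\BC$.

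First I would take determinants of this exact sequence. Writing $\xi:=c_1({\cal O}(1))$ and $\tau:=c_1(T\pi)$, one gets $\pi^*\det E\cong T\pi\otimes{\cal O}(-2)$, and passing to first Chern classes yields directly
$$
\pi^*c_1(E)=\tau-2\xi=c_1(T\pi)-2c_1({\cal O}(1)),
$$
which is the first identity.

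For the second identity I would use the same sequence to read off the Chern roots of $\pi^*E$, which are $-\xi$ and $\tau-\xi$. Then $\pi^*c_1(E)=(-\xi)+(\tau-\xi)=\tau-2\xi$ and $\pi^*c_2(E)=(-\xi)(\tau-\xi)=\xi^2-\xi\tau$. The key observation is that for any rank-two bundle $c_1^2-4c_2=(a+b)^2-4ab=(a-b)^2$ is the square of the difference of the Chern roots; applied here this difference equals $(\tau-\xi)-(-\xi)=\tau$, so
$$
\pi^*\bigl(c_1(E)^2-4c_2(E)\bigr)=\tau^2=c_1(T\pi)^2.
$$
One may of course also check this by direct expansion: $(\tau-2\xi)^2-4(\xi^2-\xi\tau)=\tau^2$.

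There is no real obstacle here: once the Euler sequence is in place, both formulas are a line of algebra. The only point worth emphasizing is the conceptual one, that $c_1^2-4c_2$ is the discriminant of the characteristic polynomial of a rank-two bundle, which is exactly why pulling back to the projective bundle (where $\pi^*E$ visibly splits) turns it into a perfect square.
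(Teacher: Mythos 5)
Your proof is correct and rests on the same key tool as the paper's, namely the relative Euler sequence $0\to{\cal O}(-1)\to\pi^*E\to T\pi\otimes{\cal O}(-1)\to0$; the only difference is bookkeeping (the paper expands $\pi^*\ch(E)=\ch({\cal O}(-1))(1+\ch(T\pi))$ and extracts $c_1^2-4c_2$ from the degree-$2$ and degree-$4$ parts, whereas you read off the Chern roots $-\xi,\ \tau-\xi$ and observe that $c_1^2-4c_2$ is their discriminant). The discriminant observation is a pleasant conceptual reformulation, but it is not a genuinely different route.
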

Thus $\pi^*T_\pi(\mtr{{\cal O}(\ell)})=T_\ell(c_1(T\pi)^2)$.
\begin{proof}
Using the Euler sequence for projective fibrations
$$
0\to{\cal O}(-1)\to\pi^*E\to T\pi\otimes{\cal O}(-1)\to0
$$
one finds
\begin{multline*}
\pi^*\ch(E)=\ch({\cal O}(-1))(1+\ch(T\pi))
2+[2c_1({\cal O}(-1))+c_1(T\pi)]
\\+[c_1({\cal O}(-1))^2+c_1(T\pi)c_1({\cal O}(-1))+\frac12c_1(T\pi)^2]+\dots
\end{multline*}
and thus $\pi^*c_1(E)=c_1(T\pi)-2c_1({\cal O}(1))$ and
\[
c_1(T\pi)^2=\pi^*(-(\ch(E)^{[2]})^2+4\ch(E)^{[4]})=\pi^*(c_1(E)^2-4c_2(E)).
\]
\end{proof}
For a general ${\bf P}^1\BC$-bundle one has to replace $E$ by $H^0({\bf P}^1\BC,{\cal O}(1))$ in the result.
The arithmetic Grothendieck-Riemann-Roch Theorem (\cite{GRS}) states with the fibres $Z\cong{\bf P}^1\BC$ of the fibration ${\bf P}E\to B$
\begin{eqnarray*}
\widehat{\ch}(\pi_*\mtr{{\cal O}(\ell)})-T_\pi(\mtr{{\cal O}(\ell)})
=\pi_*(\widehat{\ch}(\mtr{{\cal O}(\ell)})\widehat{\Td}(\mtr{T\pi}))
-\int_{Z}\ch({\cal O}(\ell))\Td(T\pi)R(T\pi).
\end{eqnarray*}
\begin{theor}\label{CompareARR1}
When multiplied by  $e^{-\frac\ell2c_1(E)}$, the summand
$$
\tilde T_\ell(-t^2):=-\frac{\cos\frac{(\ell+1)t}2}{\sin\frac{t}2}\sum_{m\geq1\atop m{\rm\,odd}}\left(2\zeta'(-m)
+{\cal H}_m\zeta(-m)\right)\frac{(-1)^{\frac{m+1}2}t^m}{m!}
$$
of $T_\ell(-t^2)$ contributes the term $\int_{Z}\ch({\cal O}(\ell))\Td(T\pi)R(T\pi)$ to the torsion form.
\end{theor}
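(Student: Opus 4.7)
The plan is to compute $\int_Z\ch(\mathcal{O}(\ell))\Td(T\pi)R(T\pi)$ directly using the projective bundle formalism and match it term-by-term with $e^{-\frac\ell2 c_1(E)}\tilde T_\ell(c_1(E)^2-4c_2(E))$ after a formal change of variable. All the hard analytic input is already encoded in Proposition \ref{liftCharClasses}, so the argument is essentially an application of the Grothendieck residue formula for a projective bundle.

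First I would introduce formal Chern roots $a_1,a_2$ of $E$, so that $c_1(E)=a_1+a_2$, $c_2(E)=a_1a_2$, and set $s:=a_1-a_2$, so $s^2=c_1(E)^2-4c_2(E)$. Writing $u:=c_1(\mathcal{O}(1))$, the defining relation on $\mathbf{P}E$ is $u^2+\pi^*c_1(E)u+\pi^*c_2(E)=0$, whose formal roots are $u=-a_1$ and $u=-a_2$; at these two points, $\tau:=c_1(T\pi)=\pi^*c_1(E)+2u$ (from Proposition \ref{liftCharClasses}) takes the values $-s$ and $+s$ respectively. For any polynomial $F(u)$, the residue formula gives
\[
\int_Z F(u)=\frac{F(-a_1)}{a_2-a_1}+\frac{F(-a_2)}{a_1-a_2}=\frac{F(-a_2)-F(-a_1)}{s}.
\]
Applying this to $F(u)=e^{\ell u}\frac{\tau}{1-e^{-\tau}}R(\tau)$, using $R(-s)=-R(s)$ (only odd powers of $s$) and $a_i=\tfrac{c_1(E)\pm s}{2}$, the computation collapses to a single hyperbolic-trig manipulation: combining the two residues over the common denominator $(1-e^{-s})(1-e^s)=-4\sinh^2(s/2)$ and using the identity $\sinh A-\sinh B=2\cosh\!\bigl(\tfrac{A+B}2\bigr)\sinh\!\bigl(\tfrac{A-B}2\bigr)$ with $A=\ell s/2$, $B=(\ell+2)s/2$, one obtains
\[
\int_Z\ch(\mathcal{O}(\ell))\Td(T\pi)R(T\pi)=e^{-\frac\ell2 c_1(E)}\,\frac{\cosh\frac{(\ell+1)s}2}{\sinh\frac{s}2}\,R(s).
\]

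Second I would check that the right-hand side of this equality equals $e^{-\frac\ell2 c_1(E)}\tilde T_\ell(-t^2)$ after the substitution $-t^2=c_1(E)^2-4c_2(E)$, i.e. $t^2=-s^2$, by formally setting $t=is$. The key identities are $\cos(is/2)=\cosh(s/2)$ (hence $\sin(t/2)=i\sinh(s/2)$ and $\cos((\ell+1)t/2)=\cosh((\ell+1)s/2)$), and the observation that for odd $m=2k+1$,
\[
(-1)^{\frac{m+1}2}t^m=(-1)^{k+1}i^{2k+1}s^m=-i\,s^m,
\]
so that $\sum_{m\text{ odd}}\alpha_m(-1)^{(m+1)/2}t^m/m!=-i\,R(s)$ with $\alpha_m=2\zeta'(-m)+\mathcal{H}_m\zeta(-m)$. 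The two factors of $i$ cancel and the overall sign in the definition of $\tilde T_\ell$ flips to a plus, yielding exactly $\frac{\cosh((\ell+1)s/2)}{\sinh(s/2)}R(s)$.

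Third, to make the identification rigorous one has to note that both sides are even formal power series in $s$ (equivalently polynomials in $s^2=c_1(E)^2-4c_2(E)$), which is why the formal substitution $t=is$ is legitimate in cohomology — the sign ambiguity in the square root disappears. Finally, for a general $\mathbf{P}^1\mathbb{C}$-bundle (where $E$ may only exist locally), one replaces $E$ throughout by $R^0\pi_*\mathcal{O}(1)$ as indicated after Proposition \ref{liftCharClasses}; both sides are stable under $E\mapsto E\otimes\mathcal{L}'$ (a routine check using $\ch(\mathcal{O}(\ell))\Td(T\pi)R(T\pi)$ and the shift in $\tau$).

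The main obstacle I anticipate is purely bookkeeping: tracking the signs in the residue formula for the $\mathbf{P}^1$-bundle (which depend on the sign convention in $u^2+\pi^*c_1(E)u+\pi^*c_2(E)=0$ coming from Proposition \ref{liftCharClasses}), and the $i$-factors in matching the odd-$m$ coefficients of $R$ with $(-1)^{(m+1)/2}t^m$. Once these are laid out carefully, the identity is just the hyperbolic-sine subtraction formula.
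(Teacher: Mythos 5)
Your proof is correct. It arrives at the same identity
\[
\int_Z\ch({\cal O}(\ell))\Td(T\pi)R(T\pi)=e^{-\frac\ell2 c_1(E)}\frac{\cosh\frac{(\ell+1)s}{2}}{\sinh\frac{s}{2}}R(s),\qquad s^2=c_1(E)^2-4c_2(E),
\]
as the paper, but by a genuinely different middle computation. The paper stays at the level of $c_1(T\pi)$: it first shows, via the projection formula and Proposition \ref{liftCharClasses}, that $\int_Z c_1(T\pi)^{2m}=0$ and $\int_Z c_1(T\pi)^{2m+1}=2(c_1(E)^2-4c_2(E))^m$; it then factors out $e^{-\frac\ell2 c_1(E)}$, symmetrizes the remaining integrand using the vanishing of even powers to rewrite $e^{\frac\ell2 c_1(T\pi)}\frac{c_1(T\pi)}{1-e^{-c_1(T\pi)}}$ as $\frac{c_1(T\pi)}{2}\frac{\cosh\frac{(\ell+1)c_1(T\pi)}{2}}{\sinh\frac{c_1(T\pi)}{2}}$, and applies the odd-power pushforward to finish. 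You instead pass to formal Chern roots $a_1,a_2$ of $E$ and evaluate the whole pushforward in one stroke via the residue (Grothendieck) formula $\int_Z F(u)=\frac{F(-a_2)-F(-a_1)}{s}$, using $\tau=\pi^*c_1(E)+2u$ and the oddness of $R$. Both approaches use Proposition \ref{liftCharClasses} as the geometric input and the same formal substitution $t=is$ to identify $\frac{\cosh\frac{(\ell+1)s}{2}}{\sinh(s/2)}R(s)$ with $\tilde T_\ell(s^2)$, and both need the observation that the result is an even power series in $s$ (so only $s^2=c_1(E)^2-4c_2(E)$ appears). Your route is a bit more mechanical and would generalize more easily to higher-rank projective bundles, at the minor cost of invoking the splitting principle; the paper's route is self-contained and avoids formal roots by splitting into parity classes. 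Your hyperbolic identity $\sinh A-\sinh B=2\cosh\frac{A+B}{2}\sinh\frac{A-B}{2}$ and the $i$-bookkeeping for $(-1)^{(m+1)/2}t^m$ with $m$ odd both check out. Your final remark about general $\mathbf P^1\BC$-bundles mirrors the paper's own sentence after Proposition \ref{liftCharClasses} and is consistent with it.
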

\begin{proof}
By the projection formula $\pi_*(\pi^*\alpha\wedge\beta)=\alpha\wedge\pi_*\beta$ in cohomology and Proposition \ref{liftCharClasses},
\begin{eqnarray}\label{VanishEven}
\int_{Z}c_1(T\pi)^{2m}
=(c_1(E)^2-4c_2(E))^{m}\cdot \int_{Z}0
=0.
\end{eqnarray}
Similarly, one finds
\begin{align}\label{ProjectionP1}\nonumber
\lefteqn{\int_{Z}c_1(T\pi)^{2m+1}
=\int_{Z}(2c_1({\cal O}(1))+\pi^*c_1(E))\cdot c_1(T\pi)^{2m}
}\\
=&2\int_{Z}c_1({\cal O}(1))\cdot\pi^*(c_1(E)^2-4c_2(E))^{m}
+\int_Z\pi^*(c_1(E)\cdot(c_1(E)^2-4c_2(E))^{m})
\nonumber
\\=&2(c_1(E)^2-4c_2(E))^{m}\cdot \int_{Z}c_1({\cal O}(1))
=2(c_1(E)^2-4c_2(E))^m.
\end{align}
Noticing $\tilde T_\ell(-(it)^2)=\frac{\cosh \frac{(\ell+1)t}2}{\sinh \frac{t}2}R(t)$, one gets
\begin{eqnarray*}
\lefteqn{\int_{Z}\ch({\cal O}(\ell))\Td(T\pi)R(T\pi)
=\int_Ze^{\ell c_1({\cal O}(1))}\frac{c_1(T\pi)}{1-e^{-c_1(T\pi)}}R(T\pi)
}\\&\stackrel{{\rm Prop.}\,\ref{liftCharClasses}}=&e^{-\frac\ell2c_1(E)}\int_Ze^{\frac\ell2 c_1(T\pi)}\frac{c_1(T\pi)}{1-e^{-c_1(T\pi)}}R(T\pi)
\\&\stackrel{(\ref{VanishEven})}=&e^{-\frac\ell2c_1(E)}\int_Z\frac{c_1(T\pi)}2\cdot\frac{\cosh \frac{(\ell+1)c_1(T\pi)}2}{\sinh \frac{c_1(T\pi)}2}R(T\pi)
\\&=&e^{-\frac\ell2c_1(E)}\int_Z\frac{c_1(T\pi)}2\cdot \tilde T_\ell(c_1(T\pi)^2)
\\&\stackrel{(\ref{ProjectionP1})}=&e^{-\frac\ell2c_1(E)}\tilde T_\ell(c_1(E)^2-4c_2(E)).
\end{eqnarray*}
\end{proof}

\section{Equivariant torsion forms}
Consider a holomorphic isometric action of a Lie group $G$ on $M$. Consider $g\in G$ and a vector field $X$ induced by an element of the Lie algebra $\frak z_{G}(g)\subset\frak g$ of the centralizer of $g$.
Let $I_{g,X}$ denote the additive equivariant characteristic class on $M_g\cap M_X$ which is given 
for a
line bundle $L$ as follows: If $X$ acts at the fixed point $p$ by an angle
$\theta'\in{\bf R}$ on
$L$ and $g$ acts by $e^{i\theta}$ with $\theta\in[0,2\pi[$, then for $|\theta'|$ sufficiently small
\begin{equation}
\left.I_{g,X}(L)\right\rvert_{p}:=\sum_{k\in\BZ\atop2\pi k+\theta\neq 0}\frac{\log(1+\frac{\theta'}{2\pi 
k+\theta})}{c_1(L)+i\theta+i\theta'+2 k\pi
i}.
\end{equation}
Set $\tilde R_0(\theta,x):=\sum_{k=0}^\infty\left(\frac\partial{\partial s}\zeta_L(-k,\theta)+\zeta_L(-k,\theta)\frac{{\cal H}_k}{2}\right)\frac{x^k}{k!}$ with $\zeta_L$ as in eq. (\ref{DefZetaL}). 
For $\theta\neq0$, set $R(\theta,x):=\tilde R_0(e^{i\theta},x)-\tilde R_0(e^{-i\theta},-x)$. In \cite[Prop. 1]{K1}, it is shown that $2iR^{\rm rot}(\theta)=R(\theta,0)$ for $\theta\in]0,2\pi[$ with $R^{\rm rot}$ given by eq. (\ref{RrotDef}).
Bismut-Goette showed in equation (\cite[(0.13)]{BG}) that for $\theta\neq0$, $|\theta'|$ sufficiently small,
\begin{eqnarray*}
\left.I_{g,X}(L)\right\rvert_{p}=R(\theta,c_1(L)+i\theta')-R(\theta+\theta',c_1(L)).
\end{eqnarray*}
We refer to \cite[Th. 2.7]{BG} for the definition of $(g,tX)$-equivariant characteristic classes $\Td_{g,tX}$, $\ch_{g,tX}$ and torsion $T_{g,tX}$.
Bismut-Goette's main result shows for the action of $g\in\SU(2)$ and the infinitesimal action of $X\in\frak{su}(2)$ on ${\bf P}^1\BC$:
\begin{theor}\cite[Th. 1]{BG}
Let $g\in\SU(2)$, $X\in\frak z_{\SU(2)}(g)$ act of ${\bf P}^1\BC$. Then the $(g,tX)$-equivariant torsion $T_{g,tX}({\bf P}^1\BC,\mtr{{\cal O}(\ell)})$ verifies for $|t|$ sufficiently small
\begin{eqnarray*}
\lefteqn{T_{ge^{tX}}({\bf P}^1\BC,\mtr{{\cal O}(\ell)})-T_{g,tX}({\bf P}^1\BC,\mtr{{\cal O}(\ell)})
}\\
&=&\int_{{\bf P}^1\BC_g}\Td_{g,tX}(\mtr{T{\bf P}^1\BC})\ch_{g,tX}(\mtr{{\cal O}(\ell)})S_{tX}({\bf P}^1\BC_g,-\omega^{T{\bf P}^1\BC})
\\&&-\int_{{\bf P}^1\BC_X\cap{\bf P}^1\BC_g}\Td_{ge^{tX}}(T{\bf P}^1\BC)\ch_{ge^{tX}}({\cal O}(\ell))I_{g,tX}(N_{{\bf P}^1\BC_g}).
\end{eqnarray*}
\end{theor}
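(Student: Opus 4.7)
The plan is to obtain this statement as the $g$-equivariant refinement of the framework already used to prove Theorem \ref{BGmain} (the case $g={\rm id}$). Since $g \in \SU(2)$ and $X \in \frak z_{\SU(2)}(g)$ commute, the one-parameter subgroup $\{e^{sX}\}_{s\in\BR}$ and the element $g$ generate an abelian subgroup whose closure acts holomorphically and isometrically on $({\bf P}^1\BC,\omega^{TM})$. The fixed point set $M_g$ is either all of ${\bf P}^1\BC$ (when $g=\mathrm{id}$) or the two poles of the rotation (when $g$ is a nontrivial torus element), and in each case $X$ restricts to a holomorphic Killing field on $M_g$ whose zero set $M_X\cap M_g$ is discrete. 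This is the geometric input required for the Bismut--Goette construction.

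First I would invoke Bismut--Goette's anomaly formalism in the presence of the isometry $g$: consider the family of deformed Dolbeault Laplacians on $M_g$ built from the restriction of $\bar\E=\overline{{\cal O}(\ell)}$, together with the Clifford action of $X$ twisted by the $g$-eigenspace decomposition of $\Lambda^\bullet T^{\ast 0,1}{\bf P}^1\BC\otimes {\cal O}(\ell)$ over $M_g$. Working fibrewise in the $\zeta$-eigenbundles $\mtr E_\zeta\to M_g$ produced by the action of $g^E$, I would repeat the derivation leading to \cite[(2.74)]{BG}, but with traces $\Tr_s$ replaced by $g$-twisted supertraces $\Tr_s g$. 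The resulting transgression formula computes
\[
T_{ge^{tX}}(M,\mtr{{\cal O}(\ell)})-T_{g,tX}(M,\mtr{{\cal O}(\ell)})
\]
as the $s=0$ derivative of a zeta function whose integrand, up to cohomologically trivial boundary pieces, localises to $M_g$.

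Second I would carry out the small-$t$/large-$t$ analysis of this transgression exactly as in the proof of Theorem \ref{BGmain}. The small-$t$ contribution on $M_g$ produces $\Td_{g,tX}(\mtr{TM})\ch_{g,tX}(\mtr{{\cal O}(\ell)})$ paired with the equivariant Bott--Chern current $S_{tX}(M_g,-\omega^{TM_g})$ constructed in the previous sections relative to the restricted K\"ahler form (when $M_g$ is discrete, $S_{tX}$ is zero-dimensional and this integral is interpreted in the obvious way). The large-$t$ contribution localises onto $M_X\cap M_g$ and reproduces exactly the $I_{g,tX}$ class: the normal bundle at a fixed point $p$ splits by the $g$-action into eigenlines on which $e^{tX}$ additionally rotates by $\theta'$, and the zeta-regularisation of $\sum_{k}\log(1+\theta'/(2\pi k+\theta))/(c_1+i\theta+i\theta'+2k\pi i)$ is precisely the definition of $I_{g,tX}(N_{{\bf P}^1\BC_g})$.

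The main obstacle will be the case analysis for whether $M_g$ is positive dimensional or discrete, together with the careful book-keeping of two competing localisations: $M_g$ (coming from $g$-invariance) and $M_X$ (coming from the Killing property of $X$). In particular, one must check that when one passes to the finite-order element $ge^{tX}$ both localisations collapse consistently onto $M_X\cap M_g$, and that the two currents $S_{tX}$ and $I_{g,tX}$ exactly account for the difference between ``$X$ integrated then localised at $g$-fixed points'' and ``$ge^{tX}$ localised at once''. Once this matching is established, the formula follows. The $g=\mathrm{id}$ check is provided by Theorem \ref{BGmain}, and the fully isolated case can be verified directly by reducing to Theorem \ref{SKwithStar} and the explicit evaluation already carried out in the proof of Theorem \ref{g-eq.Torsion}.
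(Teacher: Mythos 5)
This theorem is not proved in the paper; it is a direct citation of Bismut--Goette's main theorem \cite[Th.\ 1]{BG}, specialized to $M={\bf P}^1\BC$, $G=\SU(2)$, $\mtr\E=\overline{{\cal O}(\ell)}$. The same is true of Theorem \ref{BGmain} (cited as \cite[Th.\ 0.1]{BG}), so your opening premise --- that you can refine ``the framework already used to prove Theorem \ref{BGmain}'' --- misreads the paper: there is no such proof here to refine. Both statements are black-boxed; the paper's original contributions are the explicit formulas for $S_{tX}$ and for the torsion, derived downstream of these citations.

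As a sketch of how Bismut and Goette actually prove their Theorem 1, your outline captures the broad shape (a $g$-twisted transgression between the large- and small-$t$ limits of a deformed Mellin integral, with the small-$t$ anomaly producing the pairing of $\Td_{g,tX}\ch_{g,tX}$ with the Bott--Chern current on $M_g$ and the comparison of equivariant localisations producing the $I_{g,X}$ class). But every step you list --- ``repeat the derivation leading to (2.74)'', ``carry out the small-$t$/large-$t$ analysis exactly as in [BG]'', ``the zeta-regularisation is precisely the definition of $I_{g,tX}$'' --- is a pointer into a roughly 130-page argument that involves uniform heat-kernel estimates, wave-front-set control of the currents $S_{tX}$, and a delicate comparison of two transgressions, none of which is reproduced or even compressed here. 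This is a description of a strategy, not a proof, and it would be circular to offer it as one when the source you are paraphrasing is precisely \cite{BG}. One smaller point: in the case $M_g$ discrete you say the $S_{tX}$-term ``is interpreted in the obvious way,'' but the sharper and more useful observation (which the paper states immediately after the theorem) is that this term \emph{vanishes identically}, since $S_{tX}$ has no degree-$0$ component; this is what allows the explicit evaluation in Theorem \ref{Gg-eq.Torsion}.
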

If $g=:e^{sX}$ acts with isolated fixed points, the $S$-current term disappears, as the $S$-current has no degree $0$ part. Thus
\begin{multline*}
T_{e^{sX},tX}({\bf P}^1\BC,\mtr{{\cal O}(\ell)})=T_{e^{(s+t)X}}({\bf P}^1\BC,\mtr{{\cal O}(\ell)})
\\+\int_{{\bf P}^1\BC_X}\Td_{e^{(s+t)X}}(T{\bf P}^1\BC)\ch_{e^{(s+t)X}}({\cal O}(\ell))I_{e^{sX},tX}(T{\bf P}^1\BC_X).
\end{multline*}
Similarly to equation (\ref{IclassP1C}), one finds
\begin{multline*}
\sum_p\left.\Td_{e^{(s+t)X}}(TM)\ch_{e^{(s+t)X}}({\cal O}(\ell))I_{e^{sX},tX}(TM)\right\rvert_{p}
\\=-\frac{\cos\frac{(\ell+1)(s+t)}2}{\sin\frac{(s+t)}2}
\sum_{k\in\BZ}\frac{\log(1+\frac{t}{2\pi k+s})}{2\pi k+t+s}
.
\end{multline*}
Thus using equation (\ref{eqTorsionP1}) and equation (\cite[(0.13)]{BG}) one obtains
\begin{theor} \label{Gg-eq.Torsion}
The $(e^{sX},tX)$-equivariant torsion verifies for $|s|,|t|$ sufficiently small
\begin{eqnarray*}
\lefteqn{T_{e^{sX},tX}({\bf P}^1\BC,\mtr{{\cal O}(\ell)})
}\\&=&
2R^{\rm rot}(s+t)\frac{\cos\frac{(\ell+1)(s+t)}2}{\sin\frac{(s+t)}2}
+\sum_{m=1}^{|\ell+1|}\frac{\sin(2m-|\ell+1|)\frac{(s+t)}2}{\sin\frac{(s+t)}2}\log m
\\&&-\frac{\cos\frac{(\ell+1)(s+t)}2}{\sin\frac{(s+t)}2}
\sum_{k\in\BZ}\frac{\log(1+\frac{t}{2\pi k+s})}{2\pi k+t+s}
\\&=&\sum_{m=1}^{|\ell+1|}\frac{\sin(2m-|\ell+1|)\frac{(s+t)}2}{\sin\frac{(s+t)}2}\log m
+\frac{\cos\frac{(\ell+1)(s+t)}2}{i\sin\frac{(s+t)}2}R(t,is).
\end{eqnarray*}
\end{theor}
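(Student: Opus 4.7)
The plan is to specialize the $(g, tX)$-equivariant Bismut--Goette formula stated immediately before the theorem to $g = e^{sX}$ with $|s|$ small enough that $M_g = M_X$ consists of the same two antipodal fixed points, and then to evaluate each of the two remaining contributions explicitly. The key simplification is that $M_g = M_X$ is zero-dimensional, so the integral $\int_{M_g}\Td_{g,tX}\ch_{g,tX}(\mtr{{\cal O}(\ell)})S_{tX}$ extracts only the degree-$0$ part of the integrand; by construction (see eq.\ (\ref{dt})) $S_{tX}$ is built from $d_t$, which has no degree-$0$ component. Hence the $S$-current term vanishes and the Bismut--Goette formula reduces to
\begin{equation*}
T_{e^{sX},tX}({\bf P}^1\BC, \mtr{{\cal O}(\ell)}) = T_{e^{(s+t)X}}({\bf P}^1\BC, \mtr{{\cal O}(\ell)}) + \int_{{\bf P}^1\BC_X}\Td_{e^{(s+t)X}}(T{\bf P}^1\BC)\ch_{e^{(s+t)X}}({\cal O}(\ell))I_{e^{sX},tX}(T{\bf P}^1\BC),
\end{equation*}
which is exactly the identity displayed just before the theorem.

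To obtain the first equality of the theorem, I substitute the explicit value of $T_{e^{(s+t)X}}$ provided by equation (\ref{eqTorsionP1}), applied with parameter $s+t$ in place of $t$; this produces the $2R^{\rm rot}(s+t)\cos\frac{(\ell+1)(s+t)}{2}/\sin\frac{(s+t)}{2}$ term and the finite sum over $m$. The $I$-class integrand is then evaluated pointwise at the two fixed points: $e^{(s+t)X}$ acts on ${\cal O}(\ell)$ by $e^{\pm i\ell(s+t)/2}$ and on $T{\bf P}^1\BC$ by $e^{\pm i(s+t)}$, while $e^{sX}$ resp.\ the infinitesimal $tX$ contribute angles $\pm s$ resp.\ $\pm t$. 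Substituting these data into the defining series for $I_{e^{sX},tX}$ and combining the two antipodal contributions by the identical trigonometric manipulation used to derive (\ref{IclassP1C}) yields the displayed single sum
\begin{equation*}
-\frac{\cos\frac{(\ell+1)(s+t)}{2}}{\sin\frac{(s+t)}{2}}\sum_{k\in\BZ}\frac{\log(1+t/(2\pi k+s))}{2\pi k + t + s},
\end{equation*}
completing the first equality.

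For the second equality I invoke Bismut--Goette's identity \cite[(0.13)]{BG}, which at our fixed points (where $c_1(L)=0$) expresses $I_{e^{sX},tX}(T{\bf P}^1\BC)|_p$ as a difference of two $R$-class evaluations including the term $R(s+t, 0) = 2iR^{\rm rot}(s+t)$ by \cite[Prop.\ 1]{K1}. After multiplication by the Todd--Chern factors at both poles, the $R(s+t,0)$ piece cancels the $2R^{\rm rot}(s+t)\cos/\sin$ summand already present in the substituted (\ref{eqTorsionP1}); the remaining $R$-piece, combined across the antipodal contributions using the antisymmetry $R(\theta, x) = \tilde R_0(e^{i\theta}, x) - \tilde R_0(e^{-i\theta}, -x)$, recombines into the asserted $\cos\frac{(\ell+1)(s+t)}{2}/(i\sin\frac{(s+t)}{2})\cdot R(t, is)$ factor.

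The principal obstacle is the bookkeeping in this final step: the two antipodal contributions carry opposite signs in both the angle parameters and the orientation, and one must invoke the trigonometric identity
\begin{equation*}
\frac{e^{i(s+t)\ell/2}}{1-e^{-i(s+t)}} - \frac{e^{-i(s+t)\ell/2}}{1-e^{i(s+t)}} = -i\frac{\cos((\ell+1)(s+t)/2)}{\sin((s+t)/2)},
\end{equation*}
which is the same simplification underlying the passage from $\sum_p[\Td\ch\cdot I]$ to the single trigonometric prefactor in the first equality, in order to verify that the surviving $R$-piece has the precise form $R(t, is)$ claimed in the theorem rather than a variant with the arguments interchanged.
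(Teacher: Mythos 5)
Your proposal follows exactly the paper's route: apply the Bismut--Goette formula with $g=e^{sX}$, observe the $S$-current term dies because $M_g$ is zero-dimensional and the current has no degree-$0$ part, substitute eq.\ (\ref{eqTorsionP1}) with parameter $s+t$, evaluate the $I$-class pointwise and combine the two poles via the $\frac{e^{i\alpha\ell/2}}{1-e^{-i\alpha}}-\frac{e^{-i\alpha\ell/2}}{1-e^{i\alpha}}=\frac{\cos\frac{(\ell+1)\alpha}{2}}{i\sin\frac{\alpha}{2}}$ identity, and then pass to the $R$-class form via \cite[(0.13)]{BG}. The first equality is fully verified by this chain. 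The paper's own ``proof'' is equally terse about the final step, so at the level of exposition you have reproduced it.

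The one place you were right to be wary is the final recombination, and your caution turns out to be warranted. If one takes the paper's paraphrase of \cite[(0.13)]{BG} \emph{literally} --- $I_{g,X}(L)\rvert_p = R(\theta,c_1(L)+i\theta')-R(\theta+\theta',c_1(L))$ with $\theta$ the angle of $g$ and $\theta'$ that of $X$ --- then at a fixed point, with $\theta=s$, $\theta'=t$, $c_1=0$, one gets $I_+=R(s,it)-R(s+t,0)$, and after cancelling $R(s+t,0)=2iR^{\rm rot}(s+t)$ against the $2R^{\rm rot}(s+t)$ term, the surviving piece is $\tfrac{1}{i}R(s,it)$, not $\tfrac{1}{i}R(t,is)$. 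Since $R(s,it)\neq R(t,is)$ in general (compare, e.g., the $x^0$-coefficients $\zeta_L'(0,s)-\zeta_L'(0,-s)$ versus $\zeta_L'(0,t)-\zeta_L'(0,-t)$), these two possibilities really are different. To land on the theorem's $R(t,is)$, the identity needed is rather $I_{g,X}(L)\rvert_p = R(\theta',c_1(L)+i\theta)-R(\theta+\theta',c_1(L))$, i.e.\ the argument of the first $R$-class carries the \emph{vector field} angle $\theta'$ in the first slot and the \emph{group element} angle $\theta$ in the second. So either the theorem itself should read $R(s,it)$, or (more likely) the inline paraphrase of \cite[(0.13)]{BG} has its two angles transposed in the first slot. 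You should pin this down by going back to the original \cite{BG} statement of (0.13) rather than relying on the displayed paraphrase; as written your proposal asserts the desired $R(t,is)$ form without deriving it, which is precisely the missing piece.
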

This provides the value of the $G$-equivariant torsion form introduced in \cite{Ma} analogous to Theorem \ref{mainTorsionformResult}.

\end{document}